\documentclass[11pt]{amsart}
\usepackage{amsfonts}
\usepackage{amsmath,amscd}
\usepackage{amsthm}
\usepackage{amssymb}
\usepackage{latexsym}
\usepackage{ulem}
\usepackage{dsfont}
\usepackage{braket}
\usepackage{tikz}
\usepackage{enumitem}   
\usepackage{float}
\usepackage{caption}
\usetikzlibrary{matrix}
\usepackage[linktocpage=true]{hyperref}

\numberwithin{equation}{section}

\usepackage{soul}
\usepackage{color}

\newtheorem{thm}{Theorem}[section]
\newtheorem{cor}[thm]{Corollary}
\newtheorem{lem}[thm]{Lemma}

\newtheorem{prop}[thm]{Proposition}

\newtheorem{example}[thm]{Example}
\newtheorem{defn}[thm]{Definition}
\newtheorem{defi}[thm]{Definition}
\newtheorem{conj}[thm]{Conjecture}
\newtheorem{rem}[thm]{Remark}

\numberwithin{equation}{section}
\begin{document}
	
	\newcommand{\Uq}{U_q sl_2}
	\newcommand{\Uqhat}{U_q \widehat{sl}_2}
	\newcommand{\Loop}{\mathcal{L} U_q sl_2}

	\newcommand{\beqa}{\begin{eqnarray}}
		\newcommand{\eeqa}{\end{eqnarray}}
	\newcommand{\thmref}[1]{Theorem~\ref{#1}}
	\newcommand{\secref}[1]{Sect.~\ref{#1}}
	\newcommand{\lemref}[1]{Lemma~\ref{#1}}
	\newcommand{\propref}[1]{Proposition~\ref{#1}}
	\newcommand{\corref}[1]{Corollary~\ref{#1}}
	\newcommand{\remref}[1]{Remark~\ref{#1}}
	\newcommand{\er}[1]{(\ref{#1})}
	\newcommand{\nc}{\newcommand}
	\newcommand{\rnc}{\renewcommand}

    \newcommand{\CWM}{\mathsf{W}_-^{(N)}}
	\newcommand{\CWP}{\mathsf{W}_+^{(N)}}
	\newcommand{\CG}{\mathsf{G}^{(N)}}
	\newcommand{\CtG}{\widetilde{\mathsf{G}}^{(N)}}
	
	\newcommand{\WMu}{\displaystyle{\sum_{k=0}^{N-1} P_{-k}^{(N)}(u) \tW_{-k}^{(N)}}}
	\newcommand{\WPu}{\displaystyle{\sum_{k=0}^{N-1} P_{-k}^{(N)}(u) \tW_{k+1}^{(N)}}}
	\newcommand{\Gu}{\displaystyle{\sum_{k=0}^{N-1} P_{-k}^{(N)}(u)\tG_{k+1}^{(N)}}}
	\newcommand{\tGu}{\displaystyle{\sum_{k=0}^{N-1} P_{-k}^{(N)}(u) \tilde{\tG}_{k+1}^{(N)}}}
	
	\nc{\cal}{\mathcal}
	\nc{\diag}{\mathrm{diag}}
	\nc{\goth}{\mathfrak}
	\rnc{\bold}{\mathbf}
	\renewcommand{\frak}{\mathfrak}
	\renewcommand{\Bbb}{\mathbb}

	\newcommand{\epsp}{\varepsilon_+}
	\newcommand{\epsm}{\varepsilon_-}
	\newcommand{\bepsp}{\overline{\varepsilon}_+}
	\newcommand{\bepsm}{\overline{\varepsilon}_-}

    \nc{\adj}{\sim}

    \newcommand{\bj}{\boldsymbol{\bar{\jmath}}}

	\newcommand{\bt}{{\bf {\mathsf{T}}}}
	\newcommand{\by}{{\bf {\mathsf{Y}}}}
	\newcommand{\id}{\mathrm{id}}
	\nc{\Cal}{\mathcal}
	\nc{\Xp}[1]{X^+(#1)}
	\nc{\Xm}[1]{X^-(#1)}
	\nc{\on}{\operatorname}
	\nc{\ch}{\mbox{ch}}
	\nc{\Z}{{\bold Z}}
	\nc{\J}{{\mathcal J}}
	\nc{\C}{{\bold C}}
	\nc{\Q}{{\bold Q}}
	\nc{\oC}{{\widetilde{C}}}
	\nc{\oc}{{\tilde{c}}}
	\nc{\ocI}{ \overline{\cal I}}
	\nc{\og}{{\tilde{\gamma}}}
	\nc{\lC}{{\overline{C}}}
	\nc{\lc}{{\overline{c}}}

    \nc{\cD}{{\mathcal{D}}}
	
	\nc{\tW}{\normalfont{{\mathsf{W}}}}
	\nc{\tG}{\normalfont{{\mathsf{G}}}}
    \nc{\tK}{\normalfont{{\mathsf{K}}}}
	\nc{\tZ}{\normalfont{{\mathsf{Z}}}}
	\nc{\tI}{{\mathsf{I}}}

	\nc{\tE}{{\mathsf{E}}}
	\nc{\tF}{{\mathsf{F}}}
	\nc{\tx}{{\mathsf{x}}}
	\nc{\tho}{{\mathsf{h}}}
	\nc{\tk}{{\mathsf{k}}}
	\nc{\tep}{{\bf{\cal E}}}

	\nc{\te}{{\mathsf{e}}}
	\nc{\tf}{{\mathsf{f}}}

	\nc{\odel}{{\overline{\delta}}}
	
	\def\pr#1{\left(#1\right)_\infty}  
	
	\renewcommand{\P}{{\mathcal P}}
	\nc{\N}{{\Bbb N}}
	\nc\beq{\begin{equation}}
		\nc\enq{\end{equation}}
	\nc\lan{\langle}
	\nc\ran{\rangle}
	\nc\bsl{\backslash}
	\nc\mto{\mapsto}
	\nc\lra{\leftrightarrow}
	\nc\hra{\hookrightarrow}
	\nc\sm{\smallmatrix}
	\nc\esm{\endsmallmatrix}
	\nc\sub{\subset}
	\nc\ti{\tilde}
	\nc\nl{\newline}
	\nc\fra{\frac}
	\nc\und{\underline}
	\nc\ov{\overline}
	\nc\ot{\otimes}
	
	\nc\ochi{\overline{\chi}}
	\nc\bbq{\bar{\bq}_l}
	\nc\bcc{\thickfracwithdelims[]\thickness0}
	\nc\ad{\text{\rm ad}}
	\nc\Ad{\text{\rm Ad}}
	\nc\Hom{\text{\rm Hom}}
	\nc\End{\text{\rm End}}
	\nc\Ind{\text{\rm Ind}}
	\nc\Res{\text{\rm Res}}
	\nc\Ker{\text{\rm Ker}}
	\rnc\Im{\text{Im}}
	\nc\sgn{\text{\rm sgn}}
	\nc\tr{\text{\rm tr}}
	\nc\Tr{\text{\rm Tr}}
	\nc\supp{\text{\rm supp}}
	\nc\card{\text{\rm card}}
	\nc\bst{{}^\bigstar\!}
	\nc\he{\heartsuit}
	\nc\clu{\clubsuit}
	\nc\spa{\spadesuit}
	\nc\di{\diamond}
	\nc\cW{\mathsf{W}}
	\nc\cG{\mathsf{G}}
    \nc\cK{\mathsf{K}}
    \nc\cZ{\mathsf{Z}}
	\nc\ocW{\overline{\cal W}}
	\nc\ocZ{\overline{\cal Z}}
	\nc\al{\alpha}
	\nc\bet{\beta}
	\nc\ga{\gamma}
	\nc\de{\delta}
	\nc\ep{\epsilon}
	\nc\io{\iota}
	\nc\om{\omega}
	\nc\si{\sigma}
	\rnc\th{\theta}
	\nc\ka{\kappa}
	\nc\la{\lambda}
	\nc\ze{\zeta}
	
	\nc\vp{\varpi}
	\nc\vt{\vartheta}
	\nc\vr{\varrho}
	
	\nc\odelta{\overline{\delta}}
	\nc\Ga{\Gamma}
	\nc\De{\Delta}
	\nc\Om{\Omega}
	\nc\Si{\Sigma}
	\nc\Th{\Theta}
	\nc\La{\Lambda}
	
	\nc\boa{\bold a}
	\nc\bob{\bold b}
	\nc\boc{\bold c}
	\nc\bod{\bold d}
	\nc\boe{\bold e}
	\nc\bof{\bold f}
	\nc\bog{\bold g}
	\nc\boh{\bold h}
	\nc\boi{\bold i}
	\nc\boj{\bold j}
	\nc\bok{\bold k}
	\nc\bol{\bold l}
	\nc\bom{\bold m}
	\nc\bon{\bold n}
	\nc\boo{\bold o}
	\nc\bop{\bold p}
	\nc\boq{\bold q}
	\nc\bor{\bold r}
	\nc\bos{\bold s}
	\nc\bou{\bold u}
	\nc\bov{\bold v}
	\nc\bow{\bold w}
	\nc\boz{\bold z}
	
	\nc\ba{\bold A}
	\nc\bb{\bold B}
	\nc\bc{\bold C}
	\nc\bd{\bold D}
	\nc\be{\bold E}
	\nc\bg{\bold G}
	\nc\bh{\bar{h}}
	\nc\bi{\bold I}
	\nc\bk{\bold K}
	\nc\bl{\bold L}
	\nc\bm{\bold M}
	\nc\bn{\bold N}
	\nc\bo{\bold O}
	\nc\bp{\bold P}
	\nc\bq{\bold Q}
	\nc\br{\bold R}
	\nc\bs{\bold S}
	\nc\bu{\bold U}
	\nc\bv{\bold V}
	\nc\bw{\bold W}
	\nc\bz{\bold Z}
	\nc\bx{\bold X}

	\nc\ca{\mathcal A}
	\nc\cb{\mathcal B}
	\nc\cc{\mathcal C}
	\nc\cd{\mathcal D}
	\nc\ce{\mathcal E}
	\nc\cf{\mathcal F}
	\nc\cg{\mathcal G}
	\rnc\ch{\mathcal H}
	\nc\ci{\mathcal I}
	\nc\cj{\mathcal J}
	\nc\ck{\mathcal K}
	\nc\cl{\mathcal L}
	\nc\cm{\mathcal M}
	\nc\cn{\mathcal N}
	\nc\co{\mathcal O}
	\nc\cp{\mathcal P}
	\nc\cq{\mathcal Q}
	\nc\car{\mathcal R}
	\nc\cs{\mathcal S}
	\nc\ct{\mathcal T}
	\nc\cu{\mathcal U}
	\nc\cv{\mathcal V}
	\nc\cz{\mathcal Z}
	\nc\cx{\mathcal X}
	\nc\cy{\mathcal Y}

	\nc\e[1]{E_{#1}}
	\nc\ei[1]{E_{\delta - \alpha_{#1}}}
	\nc\esi[1]{E_{s \delta - \alpha_{#1}}}
	\nc\eri[1]{E_{r \delta - \alpha_{#1}}}
	\nc\ed[2][]{E_{#1 \delta,#2}}
	\nc\ekd[1]{E_{k \delta,#1}}
	\nc\emd[1]{E_{m \delta,#1}}
	\nc\erd[1]{E_{r \delta,#1}}
	
	\nc\ef[1]{F_{#1}}
	\nc\efi[1]{F_{\delta - \alpha_{#1}}}
	\nc\efsi[1]{F_{s \delta - \alpha_{#1}}}
	\nc\efri[1]{F_{r \delta - \alpha_{#1}}}
	\nc\efd[2][]{F_{#1 \delta,#2}}
	\nc\efkd[1]{F_{k \delta,#1}}
	\nc\efmd[1]{F_{m \delta,#1}}
	\nc\efrd[1]{F_{r \delta,#1}}

	\nc\fa{\frak a}
	\nc\fb{\frak b}
	\nc\fc{\frak c}
	\nc\fd{\frak d}
	\nc\fe{\frak e}
	\nc\ff{\frak f}
	\nc\fg{\frak g}
	\nc\fh{\frak h}
	\nc\fj{\frak j}
	\nc\fk{\frak k}
	\nc\fl{\frak l}
	\nc\fm{\frak m}
	\nc\fn{\frak n}
	\nc\fo{\frak o}
	\nc\fp{\frak p}
	\nc\fq{\frak q}
	\nc\fr{\frak r}
	\nc\fs{\frak s}
	\nc\ft{\frak t}
	\nc\fu{\frak u}
	\nc\fv{\frak v}
    \nc\fw{\frak w}
	\nc\fz{\frak z}
	\nc\fx{\frak x}
	\nc\fy{\frak y}
	
	\nc\fA{\frak A}
	\nc\fB{\frak B}
	\nc\fC{\frak C}
	\nc\fD{\frak D}
	\nc\fE{\frak E}
	\nc\fF{\frak F}
	\nc\fG{\frak G}
	\nc\fH{\frak H}
	\nc\fJ{\frak J}
	\nc\fK{\frak K}
	\nc\fL{\frak L}
	\nc\fM{\frak M}
	\nc\fN{\frak N}
	\nc\fO{\frak O}
	\nc\fP{\frak P}
	\nc\fQ{\frak Q}
	\nc\fR{\frak R}
	\nc\fS{\frak S}
	\nc\fT{\frak T}
	\nc\fU{\frak U}
	\nc\fV{\frak V}
	\nc\fZ{\frak Z}
	\nc\fX{\frak X}
	\nc\fY{\frak Y}
	\nc\tfi{\ti{\Phi}}
	\nc\bF{\bold F}
	\rnc\bol{\bold 1}
	
	\nc\ua{\bold U_\A}
	
	\nc\qinti[1]{[#1]_i}
	\nc\q[1]{[#1]_q}
	\nc\xpm[2]{E_{#2 \delta \pm \alpha_#1}}  
	\nc\xmp[2]{E_{#2 \delta \mp \alpha_#1}}
	\nc\xp[2]{E_{#2 \delta + \alpha_{#1}}}
	\nc\xm[2]{E_{#2 \delta - \alpha_{#1}}}
	\nc\hik{\ed{k}{i}}
	\nc\hjl{\ed{l}{j}}
	\nc\qcoeff[3]{\left[ \begin{smallmatrix} {#1}& \\ {#2}& \end{smallmatrix}
		\negthickspace \right]_{#3}}
	\nc\qi{q}
	\nc\qj{q}
	
	\nc\ufdm{{_\ca\bu}_{\rm fd}^{\le 0}}

	
	\nc\isom{\cong} 
	
	\nc{\pone}{{\Bbb C}{\Bbb P}^1}
	\nc{\pa}{\partial}
	\def\H{\mathcal H}
	\def\L{\mathcal L}
	\nc{\F}{{\mathcal F}}
	\nc{\Sym}{{\goth S}}
	\nc{\A}{{\mathcal A}}
	\nc{\arr}{\rightarrow}
	\nc{\larr}{\longrightarrow}
	
	\nc{\ri}{\rangle}
	\nc{\lef}{\langle}
	\nc{\W}{{\mathcal W}}
	\nc{\uqatwoatone}{{U_{q,1}}(\su)}
	\nc{\uqtwo}{U_q(\goth{sl}_2)}
	\nc{\dij}{\delta_{ij}}
	\nc{\divei}{E_{\alpha_i}^{(n)}}
	\nc{\divfi}{F_{\alpha_i}^{(n)}}
	\nc{\Lzero}{\Lambda_0}
	\nc{\Lone}{\Lambda_1}
	\nc{\ve}{\varepsilon}
	\nc{\bepsilon}{\bar{\epsilon}}
	\nc{\bak}{\bar{k}}
	\nc{\phioneminusi}{\Phi^{(1-i,i)}}
	\nc{\phioneminusistar}{\Phi^{* (1-i,i)}}
	\nc{\phii}{\Phi^{(i,1-i)}}
	\nc{\Li}{\Lambda_i}
	\nc{\Loneminusi}{\Lambda_{1-i}}
	\nc{\vtimesz}{v_\ve \otimes z^m}
	
	\nc{\asltwo}{\widehat{\goth{sl}_2}}
	\nc\ag{\widehat{\goth{g}}}  
	\nc\teb{\tilde E_\boc}
	\nc\tebp{\tilde E_{\boc'}}
	
	\newcommand{\LR}{\bar{R}}
		\newcommand{\eeq}{\end{equation}}
	\newcommand{\ben}{\begin{eqnarray}}
		\newcommand{\een}{\end{eqnarray}}

	\setcounter{MaxMatrixCols}{30}
	\newcommand{\h}{\frac{1}{2}}
	\newcommand{\tha}{\frac{3}{2}}
	
	\newcommand{\bep}{\overline{\epsilon}_+}
	\newcommand{\bem}{\overline{\epsilon}_-}
	\newcommand{\bkp}{\overline{k}_+}
	\newcommand{\bkm}{\overline{k}_-}
	\newcommand{\kp}{k_+}
	\newcommand{\km}{k_-}

    \newcommand{\calW}{\mathcal{W}}
    \newcommand{\calG}{\mathcal{G}}

	\newcommand{\ds}{\mathds}
	
	\newcommand{\futt}{{\langle 23 \rangle} }
	\newcommand{\sfu}{{\langle 34 \rangle} }
	
	\newcommand{\CE}{\cal{E} }
	\newcommand{\CF}{\cal{F} }
	\newcommand{\CH}{\cal{H} }
	\newcommand{\CW}{\cal{W}}

	\newcommand{\aw}{\mathbf{aw}}
\newcommand{\saw}{\mathbf{saw}}

	\allowdisplaybreaks

	\makeatletter
	\def\@textbottom{\vskip \z@ \@plus 1pt}
	\let\@texttop\relax
	\makeatother

	\title[$\Loop$ and bivariate functions]{The quantum loop algebra of $sl_2$ and
     \\ $q$-Racah type bivariate functions}
	\author{Pascal Baseilhac}
	\author{Nicolas  Cramp\'e}
	\address{Institut Denis-Poisson CNRS/UMR 7013 - Universit\'e de Tours -
    Universit\'e d'Orl\'eans,
		Parc de Grammont, 37200 Tours, 
		FRANCE}
	\email{pascal.baseilhac@idpoisson.fr;nicolas.crampe@cnrs.fr}

	\begin{abstract}
    A unified algebraic framework for two different six--parameter families  of bivariate $q$-Racah type functions is given using the representation theory of the quantum loop algebra $\Loop$ of $sl_2$. The starting point of the analysis is two left and right coideal subalgebras of $\Loop$ and six commutative subalgebras, built from eight elements in $\Loop \otimes \Loop$.
The eight elements depend on two scalars $a,b \in {\mathbb C}^*$, and are diagonalized on a finite-dimensional vector space. 
The bivariate $q$-Racah type functions are interpreted as the overlap coefficients relating six `distinguished' eigenbases parametrized by $a,b$ 
 of the tensor product (evaluation) representations of $\Loop$ labeled by the evaluation parameters $u_1,u_2$.
Upon certain conditions on $a,b,u_1,u_2$, it is shown that a subset of pairs of elements act as tridiagonal pairs of type I (also called $q$-Racah type). 
For $u_1/u_2=1$, another subset of pairs of elements are conjectured to act as factorized Leonard pairs. 
Thus, in both cases corresponding overlap coefficients relating the various eigenbases associated with different pairs are obtained. Some of their properties are also discussed, as well as  their relation with known bivariate polynomials of Tratnik type and the rank $2$ Askey--Wilson algebra.
	\end{abstract}
	
	\maketitle
	
\vskip -0.5cm
	
{\small MSC2020:\  16T10; 16G60; 33C80.}

{{\small  {\it \bf Keywords}: Quantum loop algebra; Tridiagonal pairs; bivariate orthogonal polynomials; bispectrality}}
	
	\tableofcontents

\section{Introduction}  
\subsection{Background} At the top of the discrete Askey scheme of classical $q$-hypergeometric orthogonal polynomials in one variable \cite{AW79,K10}, one finds
the $q$-Racah polynomials. A major progress on the subject is that all their remarkable properties (e.g. bispectrality, orthogonality) can be understood using the representation theory of the Askey–Wilson algebra $AW(3)$ \cite{Z91} and the corresponding theory of Leonard pairs \cite{T87,T04,Ter04}. From this perspective, the entries of the transition 
matrices relating the different eigenbases of elements of a given Leonard pair are, up to an overall factor, discrete orthogonal polynomials \cite{Ter04}.
As pointed out in \cite[Rem.\,2.1]{Ros07}, explicit examples of those eigenbases can be given in terms of $q$-Pochhammer's univariate polynomials on which 
the elements of the Leonard pair act as $q$-difference operators. 
Importantly for our purpose, this realization of a Leonard pair in terms of $q$-difference operators can be proven independently  
using the well-known embedding of $AW(3)$ into the finite quantum group $\Uq$ \cite{GZ93} combined with irreducible finite-dimensional representations of $\Uq$ \cite{WZ}.

Various examples of multivariate generalizations of $q$-Racah polynomials have appeared in the literature, and it is natural to expect a classification scheme as well as an interpretation of most of their properties within the framework of non-commutative algebras and their representation theory. For instance, in \cite[Sec.\,2]{Ter03} Paul Terwilliger observes that 
given the connection between Leonard pairs and the orthogonal (discrete) polynomials of the Askey scheme, it is expected that tridiagonal pairs (TD pairs) -- that arise within the representation theory of the $q$-Onsager algebra $O_q$ \cite{Ter03,B04} -- would lead to multivariate generalizations of $q$-Racah polynomials. Another class of polynomials are the ones of Tratnik type \cite{T91a,T91b}, among which one finds their $q$-analogs, the Gasper--Rahman multivariate polynomials \cite{GR05}. They enjoy bispectral properties \cite{I08}, and can be understood from the perspective of the $q$-Onsager algebra \cite{BM15}, or of the higher rank Askey--Wilson algebra and $U_qsl_2$ \cite{G25}. All together, this suggests the existence of a unified algebraic framework  underlying those apparently separated families of multivariate $q$-special functions.

 \subsection{Goal \& main results} 
The goal of this paper is to give a unified algebraic framework for two different families of bivariate $q$-Racah type functions, in relation with the representation theory of coideal subalgebras\footnote{Our starting point is among the simplest examples of quantum symmetric pairs \cite{Ko12}, that is isomorphic to the $q$-Onsager algebra introduced in \cite{Ter03} (see \cite{B04} in the context of quantum integrable models).} of the quantum loop algebra $\Loop$, the theory of TD pairs \cite{ITT99,Ter03} and factorized Leonard (FL) pairs  \cite{CZ23}.
 The main results are the following:
 
$\bullet$ Two different six-parameters families  of bivariate $q$-Racah type functions are  obtained using 
 two left/right coideal of $\Loop$ and six commutative subalgebras  of $\Loop \otimes \Loop$ identified in  Lemma \ref{lem:LoopqDG} and Proposition \ref{prop:sixp}. The two families, denoted respectively $S_{k_1,k_2}(\ell_1,\ell_2)$ and $T_{k_1,k_2}(\ell_1,\ell_2)$, are obtained by constructing six different `distinguished' bases parametrized  by some scalars $a,b \in {\mathbb C}^*$ of the two--parameter family $(u_1,u_2)$ of tensor product evaluation representations of $\Loop$, see Theorem \ref{thm:overlaps} and Figure \ref{fig:circ}.

 $\bullet$ TD pairs of $q$-Racah type are obtained  under certain conditions on $a,b,u_1,u_2$; See Propositions \ref{prop:TD} \& \ref{prop:diamA12}. Moreover, on the line $u_1/u_2=1$, TD pairs that admit a further refined structure associated with FL pairs of a new type called $B'_2$ are identified. In both cases, the overlap coefficients between eigenbases of the TD pairs and FL pairs are explicitly obtained; See Proposition \ref{prop:overTD} and Conjecture \ref{conj:FPBp2}. 

  \subsection{Outline}
      The paper is organized as follows. In Section \ref{sec:loop}, we start from the $q$-Onsager algebra $O_q$  that is embedded\footnote{Note that the embedding here considered essentially differs from the one in \cite{IT10,Ito14,Ito2}; See Remarks \ref{rem:phitilde} and \ref{rem:notcoid} in further sections.} via a map $\phi$ into $\Loop$, following \cite{BB09} (see also \cite{BVZ16}). This embedding is parametrized by two scalars $a,b \in {\mathbb C}^*$, see \eqref{def:A}, \eqref{def:B} with \eqref{eq:paramab}, that play a central role in the discussion. Using the so-called evaluation homomorphism $\Loop\rightarrow \Uq$ and (univariate
 polynomial) finite-dimensional representations of $\Uq$, we slightly upgrade the standard interpretation of the $q$-Racah polynomials: up to an overall factor, we interpret them as the overlap coefficients between
 polynomial eigenbases (in the formal variable $z$ and evaluation parameter $u\in {\mathbb C}^*$) of the two fundamental elements of $\phi(O_q)\subset \Loop$. 
In Section \ref{sec:biloop}, the previous procedure is uplifted to $\Loop \otimes \Loop$. Equipping $\Loop$ with the bialgebra structure $\Delta,\epsilon$ or  $\overline\Delta,\epsilon$, we construct six pairs of `distinguished' elements in $\Loop\otimes\Loop$. Their respective bivariate polynomial eigenbases (in the formal variables $z_1,z_2$)  parametrized by $a,b,u_1,u_2$ are derived, as well as the overlap coefficients between those bases. This leads to an interpretation of different bivariate functions  of $q$-Racah type as the overlaps between different bases of a two--parameter family $(u_1,u_2)$ of tensor product evaluation representations of $\Loop$. 
In Section \ref{sec:TDFLP}, we apply our construction to the theory of TD pairs \cite{Ter03} and FL pairs \cite{CZ23}. The conditions on the parameters $a,b,u_1,u_2$ such that we get  TD pairs and FL pairs are studied. Finally, the overlaps between respective eigenbases of TD pairs and FL pairs are obtained. Meanwhile, the rank $2$ Askey--Wilson algebra is also revisited from the new perspective. Comments and perspectives are given in the last section.

 \vspace{0.3cm}

\medskip

\noindent
{\bf Notations:}
All algebras are considered over the field of complex numbers $\mathbb{C}$.
Let $q\in\mathbb{C}^*$, we assume that $q$ is not a root of unity. The $q$-commutator  is
\beqa
\big[X,Y\big]_q=qXY-q^{-1}YX \label{def:qcom}
\eeqa
and $\big[X,Y\big]=\big[X,Y\big]_1=XY-YX$.
 The $q$-Pochhammer symbols are defined by, for $n,p\in \mathbb{N}$: 
\beqa
(z_1,\dots, z_p;q)_n=(z_1;q)_n\dots (z_p;q)_n\ , \quad
(z;q)_n&=& \prod_{k=0}^{n-1}(1-zq^k)\ ,\quad (z;q)_0:=1\ ,\label{def:qpoch}
\eeqa
and the the $q$-binomial by, for $n,\ell\in \mathbb{N}$, $\ell\leq n$:
\beqa
\left[\begin{array}{c}n\\ \ell \end{array}\right]_{q}=\frac{(q;q)_n}{(q;q)_\ell(q;q)_{n-\ell}}\ .\label{def:qbinom}
\eeqa
The $q$-hypergeometric function ${}_4\phi_3$ is defined by the following sum, for $n\in\mathbb{N}$:
\begin{align}
  {}_4\phi_3\left({{q^{-n},\;a_1, \;a_2,\; a_3 }\atop
{b_1,\;b_2, \;b_3}}\;\Bigg\vert \; q,z\right)=\sum_{k=0}^n \frac{(q^{-n},a_1,a_2,a_3;q)_k}{(q,b_1,b_2,b_3;q)_k}z^k\,.
\end{align}

\section{Quantum loop algebra $\Loop$ and $q$-Racah polynomials}\label{sec:loop}

In this section, we interpret the $q$-Racah polynomials as overlap coefficients between the eigenbases of two elements -- parametrized by a pair of scalars $a,b \in {\mathbb C}^*$ -- in a subalgebra of $\Loop$, acting on a one--parameter family of evaluation representations labeled by $u\in {\mathbb C}^*$. Under certains conditions on $a,b,u$, the two elements act on the finite-dimensional vector space as a Leonard pair, as recalled. 

\subsection{The quantum loop algebra $\Loop$ and the $q$-Onsager algebra}  The defining relations of $\Loop$ and the $q$-Onsager algebra $O_q$ are first recalled. Then, a certain embedding $O_q \subset \Loop$ that is central to our analysis is given. 
	\begin{defn}\label{def:loop}
    Define the extended Cartan matrix  $(a_{ij})_{i,j\in\{0,1\}}$ with $a_{ii} = 2, \; a_{ij} = -2$ for $i \neq j$.
		The quantum loop algebra $\Loop$ is the unital associative ${\mathbb C}$-algebra generated by the elements $e_i,f_i,k_i^{\pm 1} ; \; i \in \{0,1\}$ which satisfy the defining relations\begin{align} \label{affine01}
			&k_i e_j = q^{\frac{a_{ij}}{2}} e_j k_i \ ,  \qquad
			k_i f_j = q^{-\frac{a_{ij}}{2}} f_j k_i \ ,  \qquad
			[e_i,f_j]= \delta_{i,j} \frac{k_i^2 - k_i^{-2}}{q-q^{-1}}\ , \\
			&k_ik_i^{-1} = k_i^{-1} k_i = 1\ , \qquad k_0 k_1 =k_1 k_0=1 \ , 
		\end{align}
		with  the $q$-Serre relations:\begin{equation}\label{affine03}
			[e_i,[e_i,[e_i,e_j]_q]_{q^{-1}}]=0 \ , \qquad [f_i,[f_i,[f_i,f_j]_q]_{q^{-1}}]=0 \ ,\quad i\neq j\ .
		\end{equation}
\end{defn}

Consider the two elements
\beqa
c_0:= (q-q^{-1})^2f_0e_0 + qk_0^2 +q^{-1}k_0^{-2}\ ,\quad c_1:= (q-q^{-1})^2e_1f_1 + q^{-1}k_1^2 +qk_1^{-2}\ .
\eeqa
Noticing that the elements $e_0,f_0,k^{\pm 1}_0$ (resp. $e_1,f_1,k^{\pm 1}_1$) generate two distinct subalgebras of $\Loop$ that are isomorphic with $\Uq$, $c_0$ (resp. $c_1$) is central in one of the two subalgebras: \beqa
[c_0,x_0]=0 \quad \mbox{and}\quad  [c_1,x_1]=0 \quad \mbox{for}\quad x=e,f,k^{\pm 1}\ .\label{eq:xccom}
\eeqa

\begin{defn}
The  $q$-Onsager algebra $O_q$  is the unital associative ${\mathbb C}$-algebra with scalar $\rho\in{\mathbb C}^*$ generated by the elements  $W_0,W_1$ satisfying the so-called $q$-Dolan-Grady relations:
\begin{align}
	\lbrack  {\normalfont W}_0, \lbrack {\normalfont W}_0, \lbrack {\normalfont W}_0, {\normalfont W}_1\rbrack_q \rbrack_{q^{-1}} \rbrack &=\rho \lbrack {\normalfont W}_0, {\normalfont W}_1 \rbrack\ ,\label{qDG1}\\ 
	\lbrack {\normalfont W}_1, \lbrack {\normalfont W}_1, \lbrack {\normalfont W}_1, {\normalfont W}_0\rbrack_q \rbrack_{q^{-1}}\rbrack &= \rho  \lbrack {\normalfont W}_1, {\normalfont W}_0 \rbrack\ .\label{qDG2}
\end{align}   
\end{defn}

The $q$-Onsager algebra is known to be isomorphic with two distinct subalgebras of $\Loop$, see \cite[Prop.\,2.1]{BB09}, \cite{Ko12} and \cite[Prop.\,1.1 \& Sec.\,2]{IT10}, respectively. In this paper, we consider the embedding $O_q \subset \Loop$ given by \cite[Eq.\,(1.2)]{BB09}, with a suitable adjustment of scalars. 

\begin{prop}\label{prop:OqLoop} There exists an injective  algebra homomorphism $\phi: O_q \rightarrow \Loop$ such that:
	\beqa
	W_0 \mapsto \widehat{A} \ ,\quad W_1 \mapsto \widehat{B}\ ,\quad \rho \mapsto -(q^2-q^{-2})^2\ .\label{eq:phi}
	\eeqa
    where 
\beqa
\widehat{A} &:=& (q-q^{-1})(e_1k_1 -f_1k_1) + \alpha k_1^2 \ ,\label{def:A}\\
\widehat{B} &:=& (q-q^{-1})(f_0k_0 -e_0k_0) + \beta k_0^2\ , \qquad \alpha,\,\beta \in {\mathbb C}\ .\label{def:B}
\eeqa
\end{prop}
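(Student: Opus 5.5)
We need two things: that $\phi$ is well defined, meaning $\widehat{A},\widehat{B}$ satisfy \eqref{qDG1}--\eqref{qDG2} with $\rho=-(q^2-q^{-2})^2$, and that $\phi$ is injective.

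\emph{Well-definedness.} The plan is to expand the nested commutators and reduce them with the relations of Definition~\ref{def:loop}. There is a symmetry $e_0\leftrightarrow -e_1$, $f_0\leftrightarrow -f_1$, $k_0\leftrightarrow k_1$, $\alpha\leftrightarrow\beta$ (up to sign conventions) which exchanges $\widehat{A}$ and $\widehat{B}$. Using it, \eqref{qDG2} follows from \eqref{qDG1}, so only \eqref{qDG1} needs checking.

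Write $\widehat{A}=(q-q^{-1})(e_1-f_1)k_1+\alpha k_1^2$ and $\widehat{B}=(q-q^{-1})(f_0-e_0)k_0+\beta k_0^2$. The element $\widehat{A}$ lies in the copy $U^{(1)}\cong\Uq$ generated by $e_1,f_1,k_1^{\pm1}$. We decompose $\widehat{B}$ into pieces that are eigenvectors for $\ad\,k_1$ of weights $q^{\pm 1}$ and $1$: the pieces are $e_0k_0$, $f_0k_0$ and $k_0^2=k_1^{-2}$.

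\begin{itemize}
\item The Cartan piece $\beta k_1^{-2}$ lies in $U^{(1)}$. For this piece, \eqref{qDG1} becomes an identity inside $\Uq$. It is the statement that the element $(q-q^{-1})(e-f)k+\alpha k^2$ and $k^{-2}$ satisfy the $q$-Dolan--Grady relation, i.e.\ the known embedding of $O_q$ (via $AW(3)$) into $\Uq$ with $\rho=-(q^2-q^{-2})^2$. We will check it directly by a finite computation in the PBW basis.
\item For the $e_0$ piece, the relevant vanishing comes from the $q$-Serre relation \eqref{affine03}: $[e_1,[e_1,[e_1,e_0]_q]_{q^{-1}}]=0$.
\item We also use $[f_1,e_0]=0$ and $[e_1,f_0]=0$ from \eqref{affine01}, together with $f$-Serre.
\end{itemize}

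The main obstacle is the cross terms: $\widehat{A}$ contains both $e_1$ and $f_1$, so the cubic expression mixes terms like $e_1f_1e_0$. Our approach is to use that the map $x\mapsto[\widehat{A},[\widehat{A},[\widehat{A},x]_q]_{q^{-1}}]-\rho[\widehat{A},x]$ is $\ad$-polynomial in $\widehat{A}$. Its vanishing on the $U^{(1)}$-module generated by $e_0k_0$ should reduce to a statement about the spectrum of $\ad\,\widehat{A}$ on that module.

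\begin{itemize}
\item Under the adjoint action of $U^{(1)}$, the element $e_0$ generates a finite-dimensional module of dimension $4$ (spin $3/2$), cut out by Serre; similarly for $f_0$.
\item $\widehat{A}$ is conjugate to a Cartan-type element whose adjoint eigenvalue differences in spin $3/2$ are exactly roots of the cubic $t\mapsto (t-\theta_1)(t-\theta_2)(t-\theta_3)$ encoded by the $q$-Dolan--Grady polynomial.
\end{itemize}

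This is exactly the mechanism in \cite{BB09}. If it becomes cumbersome, we fall back to brute-force normal ordering, using the "suitable adjustment of scalars" only to match $\rho$.

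\emph{Injectivity.} We compose $\phi$ with evaluation homomorphisms $\Loop\to\Uq$ and take tensor products of evaluation modules. We then use the known PBW basis of $O_q$ in terms of $W_0,W_1$ and its alternating generators. A nonzero element of $O_q$ has a leading term in a filtration; we show its image is nonzero by taking the associated graded. There $\widehat{A}\mapsto(q-q^{-1})(e_1-f_1)k_1$ and $\widehat{B}\mapsto(q-q^{-1})(f_0-e_0)k_0$ (top degree), and these generate a coideal subalgebra isomorphic to the graded $O_q$, i.e.\ the $\rho=0$ case, whose faithfulness follows from the triangular decomposition of $\Loop$. Equivalently, we may invoke \cite[Prop.\,2.1]{BB09} and \cite{Ko12}, where injectivity is established for this family of coideal subalgebras.
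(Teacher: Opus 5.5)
The paper gives no argument of its own for this proposition. It imports both the homomorphism and its injectivity from \cite[Prop.\,2.1]{BB09} (with \cite{Ko12}), after rescaling the scalars. Your proposal is therefore a genuinely different, self-contained route.

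\textbf{Injectivity.} Your associated-graded argument is correct, and it is more self-contained than the paper's citation.
\begin{enumerate}
\item Filter $O_q$ by word length in $W_0,W_1$, and filter $\Loop$ by total degree in the $e_i,f_i$. Then $\phi$ is filtered.
\item The relations \eqref{qDG1}--\eqref{qDG2} make the $q$-Serre algebra $U^+$ surject onto $\mathrm{gr}\,O_q$.
\item Compose with $\mathrm{gr}\,\phi$ and project onto the $\mathbb{C}[k_1^{\pm1}]U^+$ part of the triangular decomposition of $\Loop$. This decomposition survives the quotient by the central element $k_0k_1-1$. The composite sends $p$ to $p(-e_0k_0,e_1k_1)$, which on each $Q$-homogeneous component is a nonzero multiple of $p(-e_0,e_1)$ times a Cartan factor.
\item Hence $\mathrm{gr}\,\phi$ is injective, and $\mathrm{gr}\,O_q\cong U^+$ comes for free. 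So $\phi$ is injective.
\end{enumerate}
The evaluation modules, the alternating PBW basis and the word ``coideal'' play no role in this argument.

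\textbf{The gap is in well-definedness.} Your reductions are fine:
\begin{itemize}
\item the automorphism $e_0\leftrightarrow -e_1$, $f_0\leftrightarrow -f_1$, $k_0\leftrightarrow k_1$, together with $\alpha\leftrightarrow\beta$, reduces \eqref{qDG2} to \eqref{qDG1};
\item the $Q$-grading forces $P(e_0k_0)$, $P(f_0k_0)$ and $P(k_0^2)$ to vanish separately, where $P(x)=[\widehat A,[\widehat A,[\widehat A,x]_q]_{q^{-1}}]-\rho[\widehat A,x]$;
\item the top component of $P(e_0k_0)$ is the Serre relation \eqref{affine03}.
\end{itemize}
The real content is in the intermediate components. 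There $[e_1,f_1]$ produces Cartan terms that must reproduce $\rho[\widehat A,x]$, and your mechanism does not handle them.
\begin{itemize}
\item Write $L,R$ for left and right multiplication by $\widehat A$. Then $P=(L-R)\big((L-q^2R)(L-q^{-2}R)-\rho\big)$, which is not a polynomial in $\ad\,\widehat A=L-R$.
\item The span of the adjoint module of $e_0k_0$ is not stable under $L$, $R$ or $L-R$. For example, $[\alpha k_1^2,e_0k_0]=\alpha(q^{-2}-1)e_0k_1$ leaves it. So there is no ``spectrum on that module'' to appeal to.
\item $\widehat A$ is not conjugate to a Cartan element inside $\Loop$.
\item The module is $3$-dimensional (spin $1$), not spin $3/2$. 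Indeed $e_0$ has $h_1$-weight $a_{10}=-2$, and Serre says $\ad(e_1)^3$ kills it.
\item Minor: $(A,K^{-2})$ satisfies \eqref{qDG1} but not \eqref{qDG2}, so it is not the $AW(3)$ embedding. Only \eqref{qDG1} is needed there, so this is harmless.
\end{itemize}
As written, existence rests on the unexecuted normal-ordering computation or on \cite{BB09}, which is exactly what the paper cites.

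\textbf{How to repair it.} Your spectral intuition can be made rigorous, but the relevant operator is not $\ad\,\widehat A$.
\begin{enumerate}
\item Let $M$ be the spin-$1$ adjoint $U^{(1)}$-module generated by $e_0k_0$.
\item Use $\Delta(\widehat A)=(\widehat A-\alpha k_1^2)\otimes 1+k_1^2\otimes\widehat A$ and $xyv=\sum\ad(x_{(1)})(y)\,x_{(2)}v$. These show that multiplication $M\otimes\mathbb{C}[\widehat A]\to\Loop$ intertwines $T=\pi_M\big((q-q^{-1})(e_1-f_1)k_1\big)\otimes 1+\pi_M(k_1^2)\otimes\widehat A$ with $L$, and intertwines $1\otimes\widehat A$ with $R$.
\item On spin $1$, the eigenvalues of $(q-q^{-1})(E-F)K+tK^2$ are $t$ and the $q$-Racah neighbours $a'q^{\pm 2}+a'^{-1}q^{\mp 2}$ of $t=a'+a'^{-1}$. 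Hence, over the commutative ring $\mathbb{C}[\widehat A]$ with $t=\widehat A$, the characteristic polynomial of $T$ is exactly $(\lambda-t)\big((\lambda-q^2t)(\lambda-q^{-2}t)-\rho\big)$.
\item Cayley--Hamilton then gives $P(e_0k_0)=0$.
\end{enumerate}
The element $f_0k_0$ is handled the same way, and so is $k_0^2$, whose adjoint module is spin $1\oplus$ spin $0$.
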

\begin{rem} Strictly speaking, the homomorphism $\phi$ is labeled by the two scalars $\alpha,\beta$. To avoid cluttering the notations, we simply write $\phi$ instead of $\phi_{\alpha,\beta}$.
\end{rem}
\begin{rem}\label{rem:phitilde} In \cite{IT10,Ito2,Ito14}, a different embedding $\tilde{\phi} \ : O_q \rightarrow \Loop$ is given, see \cite[Prop.\,1.1 \& Prop.\,1.13 for $\varepsilon=\varepsilon^*=1$]{IT10}. 
\end{rem}

\subsection{The quantum algebra $\Uq$ and the Askey--Wilson algebra \label{sec:AW3}}
There exists a surjective homomorphism from the $q$-Onsager algebra to the Askey--Wilson algebra $AW(3)$ \cite[Lem.\,10.2]{T11} (see \cite{CF20} for a review on $AW(3)$). From the perspective of the embedding $O_q \subset \Loop$, this homomorphism can be understood using the so-called {\it evaluation homomorphism} that sends $\widehat{A}=\phi(W_0),\;\widehat{B}=\phi(W_1)$ to elements in the quantum algebra $\Uq$, whose definition is now recalled.
\begin{defn}
 The quantum algebra $\Uq$ is generated by $E$, $F$, and $K$ with defining relations:
 \begin{align} \label{eq:Uq}
 	KE = q EK \ ,  \qquad
 	KF = q^{-1}FK \ ,  \qquad
 	[E,F]= \frac{K^2 - K^{-2}}{q-q^{-1}}\ , \qquad 
 	KK^{-1} = K^{-1} K = 1\ .
 \end{align}
 \end{defn}
 \noindent
 The Casimir element of $\Uq$ is given by:
 \beqa
 C&=& (q-q^{-1})^2EF + q^{-1}K^2 + qK^{-2}\ .\label{def:Cas}
 \eeqa
 
 The evaluation homomorphism $\mathsf{ev}_u \colon \Loop \rightarrow \Uq$, $u\in {\mathbb C}^*$, is such that: 
	\begin{equation}\label{map:eval}
		\begin{matrix}
			\mathsf{ev}_u(e_0)&=& uF\ , \qquad  \mathsf{ev}_u(f_0)&=&u^{-1}E\ , \qquad \mathsf{ev}_u(k_0)&=&K^{-1} \ , \\ 
			\mathsf{ev}_u(e_1)&=&uE\ , \qquad  \mathsf{ev}_u(f_1)&=&u^{-1}F\ , \qquad \mathsf{ev}_u(k_1)&=& K\ .
		\end{matrix}
	\end{equation}
The scalar $u$ is called the evaluation parameter.    
From \eqref{def:A},\eqref{def:B}, both elements are introduced.
	\beqa
	A &:=& \mathsf{ev}_u(\widehat{A}) = (q-q^{-1})(u EK -u^{-1}FK) + \alpha K^2 \ ,\label{eq:w0u}\\
	B &:=& \mathsf{ev}_u(\widehat{B}) =  (q-q^{-1})(u^{-1}EK^{-1} - u FK^{-1}) + \beta K^{-2} \ .\label{eq:w1u}
	\eeqa
\begin{rem}
The elements $A,\;B$ are essentially the so-called twisted primitive elements in \cite{Koor}, which coefficients $u^{\pm 1}$ of $EK^{\pm 1},FK^{\pm 1}$, are now interpreted as an evaluation parameter and its inverse.
\end{rem}

Note that the elements $A$ and $B$ satisfy the defining relations of the $q$-Onsager algebra since $\mathsf{ev}_u$ is an algebra homomorphism. Moreover, they satisfy relations 
\beqa
 \qquad\frac{ [A,[A,B]_q]_{q^{-1}}}{(q - q^{-1})^2}&=&
 \left(
 \left(\frac{u^2}{q} + \frac{q}{u^{2}}\right)C-\alpha\beta\right) A
 -(q+q^{-1})^2B
+(q + q^{-1})\left(\beta C-\alpha\left(\frac{u^2}{q} + \frac{q}{u^{2}}\right)\right),\label{eq:AW1} \\
  \frac{  [B,[B,A]_q]_{q^{-1}}}{(q - q^{-1})^2}&=&
 \left(
 \left(\frac{u^2}{q} + \frac{q}{u^{2}}\right)C-\alpha\beta\right) B
 -(q+q^{-1})^2B
+(q + q^{-1})\left(\alpha C-\beta\left(\frac{u^2}{q} + \frac{q}{u^{2}}\right)\right),\label{eq:AW2}
\eeqa
with $C$ given by \eqref{def:Cas}.
These relations are the defining relations of the (centrally extended) Askey--Wilson algebra $AW(3)$. 
This result has been demonstrated in \cite{GZ93}, and provides an embedding $AW(3)\subset \Uq$.

\subsection{Univariate $q$-difference operators and $q$-Racah polynomials.}\label{sec:qdiff} 
The $q$-Racah polynomials can be reconsidered in light of the representation theory of $\Loop$, as discussed in this section.

 Firstly, we construct a one--parameter family of  evaluation representations $V_{2j}(u)$ of $\Loop$ following \cite[Sec.\,4.2]{CP91}. Let $\mathbb{C}_{2j}[z]$, $j\in\frac{1}{2}{\mathbb N}$, be the vector space of dimension $2j+1$ of polynomials of degree less or equal to $2j$ in the variable $z$. Introduce the $q$-difference operators $T_\pm$ acting on $\mathbb{C}_{2j}[z]$ as:
\begin{align}
  T_\pm f(z)=f(q^{\pm 1} z).
\end{align}
Let ${\cal D}_z = {\mathbb C}[z,z^{-1}][T_\pm]$
denote the associative algebra of $q$-difference operators
with Laurent polynomials of $z$ as coefficients. The action of $\Uq$ generators on $\mathbb{C}_{2j}[z]$ is obtained using the homomorphism $\pi_j: \Uq \rightarrow  {\cal D}_z$:
\begin{align}\label{eq:pij}
  &\quad  K^{\pm 1}\mapsto q^{\mp j}T_\pm\ , 
  \qquad E \mapsto z\frac{q^{2j}T_- - q^{-2j}T_+}{q-q^{-1}}\ , \qquad F \mapsto -z^{-1}\frac{T_- - T_+}{q-q^{-1}}\ .
\end{align}
Note that $\mathbb{C}_{2j}[z]$ is an irreducible representation of $\Uq$.
The evaluation representation $V_{2j}(u)$ is 
then obtained by pulling back the irreducible finite-dimensional  representation ${\mathbb C}_{2j}[z]$ of $\Uq$ using $\mathsf{ev}_u$ defined in \eqref{map:eval}. By construction, $V_{2j}(u)$  is irreducible.

 Secondly, we consider the action of the elements $\widehat A$ and $\widehat B$ on $V_{2j}(u)$ and  related polynomial eigenbases. Their respective actions are given by certain $q$-difference operators, that depend on the evaluation parameter $u$.
Setting 
\begin{align}
    &\alpha=a+1/a\ , \qquad   \beta=b+1/b\ ,\qquad a,b\in{\mathbb C}^*\ ,\label{eq:paramab}
\end{align}
in \eqref{eq:w0u}, \eqref{eq:w1u},
we obtain:
\begin{align}
  &  \mathcal{A}=\pi_j(A)= u z q^j +u^{-1}z^{-1}q^{-j} -u^{-1} q^{-j}z^{-1}(1-zu q^{-j}a)(1-zu q^{-j}a^{- 1})T_+^2 \ , \\
   &  \mathcal{B}=\pi_j(B)= u^{-1} z q^{-j} +u z^{-1}q^{j} -uq^{j}z^{-1}(1-zu^{-1}q^{j}b)(1-zu^{-1}q^{j}b^{-1})T_-^2 \ .
\end{align}
 Evidently, these two operators satisfy the defining relations of $AW(3)$ i.e. \eqref{eq:AW1}, \eqref{eq:AW2} with the substitution $C \mapsto q^{2j+1}+ q^{-2j-1}$.

Now, recall that different  bases of $\mathbb{C}_{2j}[z]$ can be built in terms of $q$-Pochhammer symbols \eqref{def:qpoch}. For instance, let us first introduce the sequences of polynomials of total degree $2j$ defined by,  for $n=0,1,\dots, 2j$,
\begin{align}
\fu_n(z)&=\left(zuq^{-j}a;q^2\right)_{n}\left(zuq^{-j}/a;q^2\right)_{2j-n} \ ,\label{eq:un}\\
\fv_n(z)&=
    (zq^ju^{-1}b^{-1};q^{-2})_n(zq^ju^{-1}b;q^{-2})_{2j-n}\ .\label{eq:vn}
\end{align}
 The following  eigenvalue problems are straightforward extensions to $\Loop$ of the case for $\Uq$ solved in \cite[Sec.\,3.3.2]{WZ}:
    \begin{align}
     &   \mathcal{A} \fu_n=\left( q^{2n-2j}a+q^{2j-2n}a^{-1}\right) \fu_n\ , \label{eq:actA}\\
       &   \mathcal{B} \fv_{n}=\left( q^{2n-2j}b+q^{2j-2n}b^{-1} \right) \fv_{n}\ .\label{eq:actB}
    \end{align}
For generic values of $a,b$ and $u$, following \cite[Lem.\,3.1]{Ros07} it can be shown that $\{\fu_n\}_{n=0}^{2j}$ and $\{\fv_n\}_{n=0}^{2j}$  form bases of the evaluation representation $V_{2j}(u)$. 

An important result is that the overlap coefficients between the two respective polynomial eigenbases of ${\cal A}$ and ${\cal B}$, are expressed in terms of the $q$-Racah polynomials. As a straightforward application of  \cite[Eq.\,(2.16)]{Ros07}, we get:
\begin{align}
   \fv_{k}=\sum_{\ell=0}^{2j} R_k(\ell;a,b,u,j,q)  \fu_{\ell}\ ,\label{eq:overN1}
\end{align}
where
\begin{align}
    R_k(\ell;a,b,u,j,q) &= q^{2\ell(\ell-2j)}\left[\begin{array}{c}2j\\ \ell \end{array}\right]_{q^2}\frac{(q^{2-2j}abu^{-2};q^2)_\ell (q^{2-2j}a^{-1} b u^{-2};q^2)_{2j-\ell}(q^{2+2j-2k}a^{-1}b^{-1}u^{-2};q^2)_k  }
    {(q^{2\ell-4j}a^{2};q^2)_\ell (q^{-2\ell}a^{-2};q^2)_{2j-\ell}(q^{2-2j}a^{-1}bu^{-2};q^2)_k   }\nonumber\\
 &\times   {}_4\phi_3\left({{q^{-2k},\;q^{2k-4j}b^2, \;q^{-2\ell},\;q^{2\ell-4j}a^{2} }\atop
{q^{-4j},\;q^{-2j} ab u^2, \; q^{2-2j}abu^{-2} }}\;\Bigg\vert \; q^2,q^2\right)\ .\label{eq:Rac1}
\end{align}
The  $q$-hypergeometric function ${}_4\phi_3$ appearing in the previous relation is the $q$-Racah polynomial (see for example \cite{K10}). It has four parameters $a,b,u,j$.

Let us point out that the action of $\cal A$ (resp. $\cal B$) on the eigenfunctions of $\cal B$ (resp. $\cal A$) is irreducible tridiagonal for generic parameters $a,b,u$ (see also \cite[Rem.\,2.1]{Ros07}).  Explicitly, we have:
\beqa\label{eq:tri1}
{\cal A}\fv_n &=& {\cal A}_{n-1,n}\fv_{n-1}+{\cal A}_{n,n}\fv_n+{\cal A}_{n+1,n}\fv_{n+1} \ ,\\
\label{eq:tri2}
{\cal B}\fu_n &=&  {\cal B}_{n-1,n}\fu_{n-1}+{\cal B}_{n,n}\fu_n+{\cal B}_{n+1,n}\fu_{n+1}  \ ,
\eeqa
where
\begin{align}
\label{eq:Amn}    {\cal A}_{n-1,n}&=\frac{(1-q^{2n})(1-b^2q^{2n})(abu^2-q^{2j-2n+2})(a-u^2bq^{2n-2j-2})}{abu^2q^{2j}(1-b^2q^{4n-4j-2})(1-b^2q^{4n-4j})
} \,,\\
{\cal A}_{n+1,n}&=\frac{bq^{2n}(1-q^{2n-4j})(1-b^2q^{2n-4j})(abq^{2n-2j+2}-u^2)(au^2-bq^{2n-2j+2})}{au^2(1-b^2q^{4n-4j})(1-b^2q^{4n-4j+2})}\,,\\
\label{eq:A0n}  {\cal A}_{n,n}&=-{\cal A}_{n+1,n}-{\cal A}_{n-1,n}+\frac{\beta q^2}{u^2}+\alpha q^{-2j}-\frac{1}{bu^2q^{2n-2}}-\frac{bq^{2n-4j+2}}{u^2}
\,,
\end{align}
and
\begin{align}
\label{eq:Bmn} & {\cal B}_{n-1,n}={\cal A}_{n-1,n}\Big|_{a\leftrightarrow b}  a^2 q^{4n-2-4j} \,,\qquad
  {\cal B}_{n+1,n}={\cal A}_{n+1,n}\Big|_{a\leftrightarrow b}  \frac{q^{4j-2-4n}}{a^2} \,,\\
\label{eq:B0n}  & {\cal B}_{n,n}=-{\cal B}_{n-1,n}-{\cal B}_{n+1,n}+\frac{\alpha u^2}{q^2}+\beta q^{2j}-\frac{u^2q^{4j}}{aq^{2n+2}}-au^2q^{2n-2}\,.
\end{align}
These actions allow us to compute the recurrence relation and the difference equation satisfied by $R_k(\ell):=R_k(\ell;a,b,u,j,q) $. Indeed, let act with $\cal A$ on \eqref{eq:overN1}, one obtains, using \eqref{eq:actA} and \eqref{eq:tri1}:
\begin{align}\label{eq:rec1}
 {\cal A}_{k-1,k}  \fv_{k-1}+ {\cal A}_{k,k}  \fv_{k}+ {\cal A}_{k+1,k}  \fv_{k+1}=\sum_{\ell=0}^{2j} R_k(\ell;a,b,u,j,q)  (q^{2\ell-2j}a+q^{2j-2\ell}a^{-1})\fu_{\ell}\,.
\end{align}
Replacing $\fv_{k-1}$, $\fv_{k}$ and $\fv_{k+1}$ using \eqref{eq:overN1} and projecting on $\fu_\ell$, we prove the recurrence relation for $R_k(\ell)$
\begin{align}\label{eq:diff1}
  {\cal A}_{k-1,k}  R_{k-1}(\ell)+ {\cal A}_{k,k}  R_{k}(\ell)+ {\cal A}_{k+1,k}  R_{k+1}(\ell)=(q^{2\ell-2j}a+q^{2j-2\ell}a^{-1}) R_k(\ell) \,.  
\end{align}
With respect to the action of $\cal B$, similarly we get the difference equation
\begin{align}
    (q^{2k-2j}b+q^{2j-2k}b^{-1})R_k(\ell)={\cal B}_{\ell,\ell+1} R_k(\ell+1)+{\cal B}_{\ell,\ell} R_k(\ell)+{\cal B}_{\ell,\ell-1} R_k(\ell-1)\,.\label{eq:diffR2}
\end{align}
 Note that the recurrence relation \eqref{eq:diff1} and difference equation \eqref{eq:diffR2} above manifest the so-called 
{\it bispectrality} of the $q$-Racah polynomials.
Since both relations are inherited from  the operators $\cal A$ and ${\cal B}$ that satisfy the defining relations of $AW(3)$, we say that the bispectral algebra associated to the $q$-Racah polynomials is the Askey--Wilson algebra. Thus, we recover the result demonstrated in \cite{Z91}.

Now, recall the definition of a Leonard pair \cite[Def.\,1.1]{T01}. Identifying the conditions such that none of the eigenvalues \eqref{eq:actA}, \eqref{eq:actB} have multiplicities, nor the entries ${\cal A}_{n\pm 1,n},{\cal B}_{n\pm 1,n}$ are vanishing, for those conditions  $\cal A$ and $\cal B$ are diagonalizable and $\cal A$ (resp. $\cal B$) act as an irreducible tridiagonal matrix with respect to the eigenbasis of $\cal B$ (resp. $\cal A$). Also, observe that the pair $(\cal A =\pi_j(A), \cal B=\pi_j(B))$ satisfies the image via $\pi_j$ of the Askey-Wilson relations \eqref{eq:AW1}, \eqref{eq:AW2}. Applying  \cite[Thm.\,6.2]{T04}, it follows:

\begin{rem}\label{rem:LPcond} The pair $\cal A$ and $\cal B$ is a Leonard pair if and only if the following conditions hold:
\begin{enumerate}[label=(\roman*)]
\item $ab,a/b \notin \{u^2q^{2m-2j},u^{-2}q^{2m-2j+2}|m=0,\cdots,2j-1\}$\,,
\item $\pm a, \pm b \notin \{q^{-2j+1},\cdots,q^{2j-1}\}$\,. 
\end{enumerate}
\end{rem}

For completeness, let us finally introduce the following third sequence of polynomials, for $n=0,1,\dots, 2j$,
\begin{align}
    \fs_n(z)=\left(z u q^{-j}a;q^2\right)_{n}\left(zu^{-1}q^{j}b;q^{-2}\right)_{2j-n}\ .\label{eq:sn}
\end{align}
The actions of $\cal A$ and $\cal B$ on the vectors $\fs_n$ \eqref{eq:sn}  take the following upper/lower diagonal structure:
 \begin{align} 
    &    \mathcal{A} \fs_n=\left(q^{2n-2j}a+ q^{2j-2n}a^{-1} \right) \fs_n +(bu^{-2}q^{n-2j+2}-a^{-1}q^{-n})(q^{2j-n}-q^{n-2j})\fs_{n+1}\ ,\label{eq:actAsplit} \\
     &     \mathcal{B} \fs_n=\left(q^{2n-2j}b + q^{2j-2n}b^{-1} \right) \fs_n+(b^{-1}q^{2j-n}-au^{2}q^{n-2})(q^{n}-q^{-n})\fs_{n-1}\ .\label{eq:actBsplit}
    \end{align}
For generic values of $a,b$ and $u$,   $\{\fs_n\}_{n=0}^{2j}$ form a basis of $V_{2j}(u)$; See \cite[Lem.\,5.1 for $N=1$]{BVZ16}. 
Up to a renormalization, these vectors form the so-called split basis of the Leonard pair, see for example \cite{T05}.

\section{$\Loop \otimes \Loop$ and bivariate functions}\label{sec:biloop}

We have just seen that a subalgebra of $\Loop$ provides an algebraic setting for the $q$-Racah polynomials. In this Section, we shall show that certain subalgebras of $\Loop \otimes \Loop$ parametrized by $a,b$ provide an algebraic setting for  
two different families of bivariate functions  of $q$-Racah type, the latter being interpreted as overlap coefficients between certain eigenbases of a two--parameter family of evaluation representations labeled by $u_1,u_2$. 

\subsection{Bialgebra structure and $\Loop \otimes\Loop$}
Let us define $(\Loop,\Delta,\epsilon)$ the bialgebra with coproduct $\Delta \colon \Loop \to \Loop \otimes\Loop$ and counit $\epsilon \colon \Loop\to  {\mathbb C} $ defined by:
		\begin{align} \label{affinecoprodefk}
			&\Delta(e_i)= e_i \otimes k_i^{-1}  + k_i \otimes e_i , \; \; \Delta(f_i) =f_i \otimes k_i^{-1} + k_i \otimes f_i, \; \; \Delta(k_i^{\pm 1}) = k_i^{ \pm 1} \otimes k_i^{\pm 1} , \\
            \label{affine-counit}
			&\epsilon(e_i)=\epsilon(f_i)=0\ , \qquad  \epsilon(k_i^{\pm 1})=1 \ .
		\end{align}
Let $\sigma$ denote the permutation operator such that $\sigma(x\otimes y) = y \otimes x$ for any $x,y\in \Loop$.
Define the opposite coproduct of $\Loop$:
		\beqa
		\overline{\Delta}  = \sigma \circ \Delta\ .
		\eeqa
Then, $(\Loop,\overline\Delta,\epsilon)$ is also a bialgebra. 	

Various subalgebras of $\Loop \otimes \Loop$ can be considered. Recall that $\widehat{A},\;\widehat{B}$ generate the subalgebra $\phi(O_q)$ of $\Loop$, see Proposition~\ref{prop:OqLoop}. It is seen that the elements 
\beqa
\Delta(\widehat{A}) = (\widehat{A} - \alpha k_1^2) \otimes 1 + k_1^2 \otimes \widehat{A} \ ,\qquad
 \Delta(\widehat{B}) = (\widehat{B} - \beta k_0^2) \otimes 1 + k_0^2 \otimes \widehat{B} \ \label{eq:DelAB}
\eeqa
 belong to $\Loop \otimes \phi(O_q)$, which  shows that $(\phi(O_q),\Delta,\epsilon)$ is a left coideal subalgebra of $\Loop$. Alternatively,  $\overline\Delta(A),\overline\Delta(B)\in \phi(O_q) \otimes\Loop$, \textit{i.e.}, $(\phi(O_q),\overline\Delta,\epsilon)$ is a right coideal subalgebra of $\Loop$.
 \begin{rem}\label{rem:notcoid} Recall Remark \ref{rem:phitilde}. Using the definition of the coproduct given in \cite[Sec.\,1]{IT10}, it is readily checked that $\Delta(\tilde{\phi}(W_0)),\Delta(\tilde{\phi}(W_1)) \notin \tilde{\phi}(O_q)\otimes \Loop$. Thus, $(\tilde\phi(O_q),\Delta,\epsilon)$ and $(\tilde\phi(O_q),\overline\Delta,\epsilon)$ are neither left nor right coideal subalgebras of $\Loop$.
 \end{rem}
 
 In addition, we also introduce the four additional elements in $\Loop \otimes \Loop$ using the counit ($\epsilon(\widehat{A})=\epsilon(\widehat{B})=1$):
\beqa
(\id \otimes \epsilon)\Delta(\widehat{A}) &=& \widehat{A}  \otimes 1 \ ,\qquad 
 (\id \otimes \epsilon)\Delta(\widehat{B}) = \widehat{B}  \otimes 1\ ,\\
(\epsilon\otimes \id)\overline{\Delta}(\widehat{A}) &=& 1 \otimes \widehat{A}  \ ,\qquad
(\epsilon\otimes \id)\overline{\Delta}(\widehat{B}) = 1 \otimes \widehat{B} \ .
\eeqa
\begin{lem}\label{lem:LoopqDG}
	The pairs $(\Delta(\widehat{A}),\Delta(\widehat{B}))$ and $(\overline{\Delta}(\widehat{A}),\overline{\Delta}(\widehat{B}))$ satisfy the $q$-Dolan--Grady relations~\eqref{qDG1},~\eqref{qDG2} with $\rho \mapsto -(q^2-q^{-2})^2$.
\end{lem}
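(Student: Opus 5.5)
The proof rests on one observation: $\Delta$ and $\overline{\Delta}$ are algebra homomorphisms $\Loop\to\Loop\otimes\Loop$. The $q$-Dolan--Grady relations \eqref{qDG1}, \eqref{qDG2} are polynomial identities built from products, sums and scalar multiples, so they are preserved under any algebra homomorphism. By Proposition~\ref{prop:OqLoop}, the pair $(\widehat{A},\widehat{B})=(\phi(W_0),\phi(W_1))$ satisfies \eqref{qDG1}, \eqref{qDG2} in $\Loop$ with $\rho=-(q^2-q^{-2})^2$. Applying $\Delta$ to both sides of each relation, and using $\Delta([X,Y]_q)=[\Delta(X),\Delta(Y)]_q$, gives the same relations for $(\Delta(\widehat{A}),\Delta(\widehat{B}))$ with the same $\rho$. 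Concretely, $\Delta\circ\phi\colon O_q\to\Loop\otimes\Loop$ is an algebra homomorphism sending $W_0\mapsto\Delta(\widehat{A})$ and $W_1\mapsto\Delta(\widehat{B})$.

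The second pair follows in the same way, since $\overline{\Delta}=\sigma\circ\Delta$ and $\sigma$ is an algebra automorphism of $\Loop\otimes\Loop$. Hence $\overline{\Delta}$ is again an algebra homomorphism, and $\overline{\Delta}\circ\phi$ sends $W_0\mapsto\overline{\Delta}(\widehat{A})$ and $W_1\mapsto\overline{\Delta}(\widehat{B})$.

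The only input to check is that $\Delta$ as defined in \eqref{affinecoprodefk} is a well-defined algebra homomorphism. That is, the images $\Delta(e_i),\Delta(f_i),\Delta(k_i^{\pm1})$ must satisfy \eqref{affine01} and \eqref{affine03}, including $k_0k_1=1$, which is clear since $\Delta(k_0)\Delta(k_1)=k_0k_1\otimes k_0k_1$. This is the standard Hopf structure on $\Loop$ and is taken as known; the $q$-Serre relations are the only nontrivial part, and they are classical.

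There is no real obstacle here. The main content lies in Proposition~\ref{prop:OqLoop}, which may be assumed. As an optional sanity check, the explicit forms \eqref{eq:DelAB} show that the images lie in $\Loop\otimes\phi(O_q)$, but this is not needed for the lemma.
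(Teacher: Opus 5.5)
Your proposal is correct and follows the same approach as the paper, whose proof is a one-line appeal to Proposition~\ref{prop:OqLoop} together with the fact that $\Delta$ and $\overline{\Delta}=\sigma\circ\Delta$ are algebra homomorphisms. The composites $\Delta\circ\phi$ and $\overline{\Delta}\circ\phi$ are exactly the content of that argument.
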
	
\begin{proof} By Proposition \ref{prop:OqLoop}, and the fact that $\Delta$ (and $\overline\Delta$) are algebra homomorphisms.
\end{proof}
Moreover, six different commutative subalgebras of $\Loop \otimes \Loop$ are identified.
\begin{prop}\label{prop:sixp} Each of the six pairs $(1 \otimes \widehat{A},\Delta(\widehat{A}))$, $(\widehat{A} \otimes 1,\overline\Delta(\widehat{A}))$, $(1 \otimes \widehat{B},\Delta(\widehat{B}))$, $(\widehat{B} \otimes 1,\overline\Delta(\widehat{B}))$, $(\widehat{A} \otimes 1,1 \otimes \widehat{B})$,  $(\widehat{B}\otimes 1, 1 \otimes \widehat{A})$ is a commutative subalgebra of $\Loop \otimes \Loop$.
\end{prop}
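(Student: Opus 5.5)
Since each pair generates a subalgebra, the claim reduces to showing that the two generators in each pair commute in $\Loop\otimes\Loop$. The two pairs $(\widehat{A}\otimes 1, 1\otimes\widehat{B})$ and $(\widehat{B}\otimes 1, 1\otimes\widehat{A})$ are immediate, because elements of the form $x\otimes 1$ and $1\otimes y$ always commute. For the other four pairs, I will use the explicit coproduct formulas \eqref{eq:DelAB}, together with their opposite versions
\[
\overline{\Delta}(\widehat{A}) = 1\otimes(\widehat{A}-\alpha k_1^2) + \widehat{A}\otimes k_1^2\ ,\qquad \overline{\Delta}(\widehat{B}) = 1\otimes(\widehat{B}-\beta k_0^2) + \widehat{B}\otimes k_0^2\ ,
\]
which follow from applying $\sigma$ to \eqref{eq:DelAB}.

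Consider first $(1\otimes\widehat{A},\Delta(\widehat{A}))$. The first term $(\widehat{A}-\alpha k_1^2)\otimes 1$ of $\Delta(\widehat{A})$ commutes with $1\otimes\widehat{A}$ trivially. The second term $k_1^2\otimes\widehat{A}$ also commutes with $1\otimes\widehat{A}$, since $[1\otimes\widehat{A}, k_1^2\otimes\widehat{A}] = k_1^2\otimes[\widehat{A},\widehat{A}]=0$. Hence $[1\otimes\widehat{A},\Delta(\widehat{A})]=0$. The same computation, with $k_0$ and $\widehat{B}$ in place of $k_1$ and $\widehat{A}$, handles $(1\otimes\widehat{B},\Delta(\widehat{B}))$.

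For $(\widehat{A}\otimes 1,\overline{\Delta}(\widehat{A}))$, the term $1\otimes(\widehat{A}-\alpha k_1^2)$ commutes with $\widehat{A}\otimes 1$ trivially, and $\widehat{A}\otimes k_1^2$ commutes with $\widehat{A}\otimes 1$ because $\widehat{A}$ commutes with itself. The $\widehat{B}$ case is identical. Alternatively, applying the algebra automorphism $\sigma$ of $\Loop\otimes\Loop$ to the commuting pairs already found gives $\sigma(1\otimes\widehat{A})=\widehat{A}\otimes 1$ and $\sigma(\Delta(\widehat{A}))=\overline{\Delta}(\widehat{A})$, so these cases follow at once.

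No step is a real obstacle. The only point needing care is to check \eqref{eq:DelAB} from the coproduct \eqref{affinecoprodefk}. Using $\Delta(e_1k_1)=e_1k_1\otimes 1 + k_1^2\otimes e_1k_1$, the analogous identity for $f_1k_1$, and $\Delta(k_1^2)=k_1^2\otimes k_1^2$, one gets $\Delta(\widehat{A}) = (q-q^{-1})(e_1k_1-f_1k_1)\otimes 1 + k_1^2\otimes\widehat{A}$, which is exactly \eqref{eq:DelAB}. The computation for $\widehat{B}$ is the same. With this in hand, the commutativity in every case reduces to $[\widehat{A},\widehat{A}]=[\widehat{B},\widehat{B}]=0$.
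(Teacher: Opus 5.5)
Your proof is correct and follows the paper's argument: you read off the commutation of $1\otimes\widehat{A}$ with $\Delta(\widehat{A})$ (and of $1\otimes\widehat{B}$ with $\Delta(\widehat{B})$) from \eqref{eq:DelAB}, treat the two mixed pairs as trivial, and get the $\overline{\Delta}$ cases by applying $\sigma$, exactly as the paper does. Your explicit check of \eqref{eq:DelAB} from the coproduct is a useful detail that the paper leaves implicit.
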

\begin{proof}
It is clear that $[1\otimes \widehat{A},\Delta(\widehat{A})]=0$ due to the first equality in \eqref{eq:DelAB}. Also, $\big[\widehat{A} \otimes 1, 1 \otimes \widehat{B}\big] =0$ is immediate. Other equalities are shown using $\sigma$, the second equality in eq.\,\eqref{eq:DelAB} and the substitution  $\widehat{A} \rightarrow \widehat{B}$.
\end{proof}

\subsection{Generalizations of $A,B$}
 The images via the tensor product of the evaluation homomorphism \eqref{map:eval} of the eight elements
$\{\Delta(X),\overline\Delta(X),1\otimes X,X\otimes 1|X\in \widehat{A},\widehat{B}\}$
 are now discussed. For convenience,  we denote:
\begin{align}\label{eq:Duu}
 &   \Delta_{u_1,u_2}=(\mathsf{ev}_{u_1}\otimes\mathsf{ev}_{u_2}) \circ \Delta\,,
  \qquad  \overline{\Delta}_{u_1,u_2}=(\mathsf{ev}_{u_1}\otimes\mathsf{ev}_{u_2}) \circ \overline{\Delta}\,.
\end{align}
Let $u_1,u_2 \in {\mathbb C}^*$ be evaluation parameters. Let us define the eight elements in $\Uq \otimes \Uq$:
\beqa 
&&{A}_1 :=(\id \otimes \epsilon){\Delta}_{u_1,u_2}(\widehat{A}) =\left( (q-q^{-1})(u_1 EK -u_1^{-1}FK )+ \alpha K^2\right)\otimes 1\ , \label{eq:defA1}\\
&&{A}_2:=( \epsilon\otimes \id)\overline{\Delta}_{u_1,u_2}(\widehat{A})=1\otimes\left( (q-q^{-1})(u_2 EK -u_2^{-1}FK )+ \alpha K^2\right)\ , \label{eq:defA2}\\
&&{B}_1:=(\id \otimes \epsilon){\Delta}_{u_1,u_2}(\widehat{B})= \left((q-q^{-1})(u_1^{-1}EK^{-1} -u_1 FK^{-1} )+ \beta K^{-2}\right)\otimes 1\ ,\label{eq:defAs1}\\
&&B_2:= (\epsilon\otimes \id)\overline{\Delta}_{u_1,u_2}(\widehat{B})= 1\otimes \left( (q-q^{-1})(u_2^{-1}EK^{-1} -u_2 FK^{-1})+ \beta K^{-2}\right)\ ,\label{eq:defAs2}
\eeqa
and
\beqa
&&{A}_{12}:={\Delta}_{u_1,u_2}(\widehat{A})={A}_1+K_1^{2}({A}_2-\alpha)\ , \label{eq:defA12}\\
&& {B}_{12}:=\overline{\Delta}_{u_1,u_2}(\widehat{B})={B}_2+({B}_1-\beta)K_2^{-2}\ ,\label{eq:defB12}\\
&&{A}_{21}:=\overline{\Delta}_{u_1,u_2}(\widehat{A})={A}_2+({A}_1-\alpha)K_2^{2}\ , \\
&& {B}_{21}:={\Delta}_{u_1,u_2}(\widehat{B})={B}_1+K_1^{-2}({B}_2-\beta)\ .\label{eq:defAs21}
\eeqa

By Lemma \ref{lem:LoopqDG}, it follows:
\begin{cor}\label{cor:pairDG}
The pairs $(A_{12},B_{21})$ and $(A_{21},B_{12})$ satisfy the $q$-Dolan--Grady relations~\eqref{qDG1},~\eqref{qDG2} with $\rho \mapsto -(q^2-q^{-2})^2$.
\end{cor}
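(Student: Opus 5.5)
The plan is to push the relations of Lemma~\ref{lem:LoopqDG} forward along the algebra homomorphism $\mathsf{ev}_{u_1}\otimes\mathsf{ev}_{u_2}\colon \Loop\otimes\Loop\to\Uq\otimes\Uq$. No computation inside $\Uq\otimes\Uq$ should be needed.

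First, I identify the four elements by their definitions \eqref{eq:defA12}--\eqref{eq:defAs21}:
\[
A_{12}=\Delta_{u_1,u_2}(\widehat{A}),\quad B_{21}=\Delta_{u_1,u_2}(\widehat{B}),\quad A_{21}=\overline{\Delta}_{u_1,u_2}(\widehat{A}),\quad B_{12}=\overline{\Delta}_{u_1,u_2}(\widehat{B}).
\]
Here $\Delta_{u_1,u_2}=(\mathsf{ev}_{u_1}\otimes\mathsf{ev}_{u_2})\circ\Delta$ and $\overline{\Delta}_{u_1,u_2}$ is defined analogously, as in \eqref{eq:Duu}. Each of these maps is a composition of unital algebra homomorphisms. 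Indeed, $\Delta$ and $\overline\Delta=\sigma\circ\Delta$ are algebra maps, since $\sigma$ is an algebra automorphism of $\Loop\otimes\Loop$. Likewise $\mathsf{ev}_{u_1}\otimes\mathsf{ev}_{u_2}$ is an algebra map, being a tensor product of the algebra homomorphisms \eqref{map:eval}.

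Next, I use that any algebra homomorphism $\psi$ preserves $q$-commutators and commutators, since $\psi([X,Y]_q)=[\psi(X),\psi(Y)]_q$. It therefore maps each side of \eqref{qDG1} and \eqref{qDG2}, evaluated at a pair $(X,Y)$, to the same expression evaluated at $(\psi(X),\psi(Y))$, with the scalar $\rho$ unchanged.

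By Lemma~\ref{lem:LoopqDG}, the pair $(\Delta(\widehat{A}),\Delta(\widehat{B}))$ satisfies \eqref{qDG1} and \eqref{qDG2} with $\rho=-(q^2-q^{-2})^2$. Applying $\mathsf{ev}_{u_1}\otimes\mathsf{ev}_{u_2}$ then gives the relations for $(A_{12},B_{21})$. The same argument applied to $(\overline\Delta(\widehat{A}),\overline\Delta(\widehat{B}))$ gives the relations for $(A_{21},B_{12})$.

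There is no real obstacle. The only point needing care is to match the labels correctly: $B_{21}$, not $B_{12}$, is the $\Delta$-image of $\widehat{B}$, which explains why the pairs are $(A_{12},B_{21})$ and $(A_{21},B_{12})$.
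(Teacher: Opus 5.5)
Your proposal is correct and is essentially the paper's own argument: the corollary follows from Lemma~\ref{lem:LoopqDG} by applying the algebra homomorphism $\mathsf{ev}_{u_1}\otimes\mathsf{ev}_{u_2}$, since $(A_{12},B_{21})$ and $(A_{21},B_{12})$ are the images of $(\Delta(\widehat{A}),\Delta(\widehat{B}))$ and $(\overline{\Delta}(\widehat{A}),\overline{\Delta}(\widehat{B}))$ respectively. You also matched the labels correctly ($B_{21}$, not $B_{12}$, is the $\Delta$-image of $\widehat{B}$), which is the only point needing care.
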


Moreover, by Proposition \ref{prop:sixp}:
\begin{cor}\label{cor:comm} The following commutation relations hold in $\Uq\otimes \Uq$:
    \begin{align}
    [A_2,A_{12}] =[B_{12},B_1]=[A_1,A_{21}]=[B_2,B_{21}]=[A_1,B_2]=[A_2,B_{1}]=0\,.
    \end{align}
\end{cor}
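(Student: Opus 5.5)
The plan is to obtain Corollary~\ref{cor:comm} by pushing the commutation relations of Proposition~\ref{prop:sixp} forward along the algebra homomorphism $\mathsf{ev}_{u_1}\otimes\mathsf{ev}_{u_2}\colon \Loop\otimes\Loop\to\Uq\otimes\Uq$. The tensor product of two algebra homomorphisms is again one, and an algebra homomorphism maps commutators to commutators. So the vanishing of $[x,y]$ in $\Loop\otimes\Loop$ gives $[(\mathsf{ev}_{u_1}\otimes\mathsf{ev}_{u_2})(x),(\mathsf{ev}_{u_1}\otimes\mathsf{ev}_{u_2})(y)]=0$ in $\Uq\otimes\Uq$. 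The only work is matching each element of Proposition~\ref{prop:sixp} with its image among the eight elements \eqref{eq:defA1}--\eqref{eq:defAs21}.

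The key matching step is to identify images of the form $1\otimes\widehat{A}$ or $\widehat{A}\otimes 1$. Since $\epsilon(\widehat{A})=1$, we have $(\id\otimes\epsilon)\Delta(\widehat{A})=\widehat{A}\otimes 1$, and $\mathsf{ev}_{u_1}\otimes\mathsf{ev}_{u_2}$ sends this to $\mathsf{ev}_{u_1}(\widehat{A})\otimes 1=A_1$. Similarly $1\otimes\widehat{A}\mapsto A_2$, $\widehat{B}\otimes 1\mapsto B_1$ and $1\otimes\widehat{B}\mapsto B_2$. I would also quickly verify the explicit formulas \eqref{eq:defA12}--\eqref{eq:defAs21} from \eqref{eq:DelAB}. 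For example, $\Delta(\widehat{A})=(\widehat{A}-\alpha k_1^2)\otimes 1+k_1^2\otimes\widehat{A}$ maps to $A_1-\alpha K_1^2+K_1^2A_2=A_1+K_1^2(A_2-\alpha)$, using $\mathsf{ev}_{u}(k_1)=K$.

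With these identifications, the six pairs of Proposition~\ref{prop:sixp} map as follows:
\begin{itemize}
\item $(1\otimes\widehat{A},\Delta(\widehat{A}))$ maps to $(A_2,A_{12})$;
\item $(\widehat{A}\otimes1,\overline\Delta(\widehat{A}))$ maps to $(A_1,A_{21})$;
\item $(1\otimes\widehat{B},\Delta(\widehat{B}))$ maps to $(B_2,B_{21})$;
\item $(\widehat{B}\otimes1,\overline\Delta(\widehat{B}))$ maps to $(B_1,B_{12})$;
\item $(\widehat{A}\otimes1,1\otimes\widehat{B})$ maps to $(A_1,B_2)$;
\item $(\widehat{B}\otimes1,1\otimes\widehat{A})$ maps to $(B_1,A_2)$.
\end{itemize}
These are exactly the six commutators listed in the corollary, up to the sign from reordering. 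There is no real obstacle. The one point needing care is the bookkeeping of which tensor factor carries $u_1$ versus $u_2$ under $\overline\Delta=\sigma\circ\Delta$, and this is settled by the definitions \eqref{eq:Duu}. Alternatively, each relation can be checked directly: for instance, $A_2$ commutes with $A_1$ and $K_1^2$ (different tensor factors) and with itself, hence with $A_{12}$.
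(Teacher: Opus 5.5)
Your proposal is correct and is the paper's argument: the paper obtains the corollary directly ``by Proposition~\ref{prop:sixp}'', i.e.\ by applying the algebra homomorphism $\mathsf{ev}_{u_1}\otimes\mathsf{ev}_{u_2}$ to the six vanishing commutators there, and your matching of the six pairs to the six commutators is right. One small slip, copied from the paper's text: in fact $\epsilon(\widehat{A})=\alpha$, not $1$. This is harmless, because $(\id\otimes\epsilon)\Delta(\widehat{A})=\widehat{A}\otimes 1$ holds by the counit axiom anyway, and $1\otimes\widehat{A}\mapsto 1\otimes\mathsf{ev}_{u_2}(\widehat{A})=A_2$ can be read off directly from \eqref{eq:defA2}.
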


\subsection{Bivariate $q$-difference operators and $q$-Racah functions.} 

Bivariate $q$-Racah type functions can be reconsidered in light of the representation theory of $\Loop \otimes \Loop$, as shown in this section. Recall that the action of $\mathcal{A}$ and $\mathcal{B}$ on certain eigenbases of $V_{2j}(u)$ was given, see \eqref{eq:actA}, \eqref{eq:actB} and \eqref{eq:actAsplit}, \eqref{eq:actBsplit}. Our task is now to extend the analysis to a tensor product of evaluation representations of $\Loop$.

Firstly, a two--parameter family of evaluation representations 
$V_{2j_1}(u_1)\otimes V_{2j_2}(u_2)$ of $\Loop$ is constructed by pulling back $\mathbb{C}_{2j_1}[z_1]\otimes \mathbb{C}_{2j_2}[z_2]$, $j_1,j_2\in\frac{1}{2}{\mathbb N}$, i.e.\,the vector space of dimension $(2j_1 +1)\times(2j_2+1)$ of bivariate polynomials of degree less or equal to $2j_1$ (resp. $2j_2$) in the variable $z_1$ (resp. $z_2$).
Unless ratios of the evaluation parameters are certain powers of $q$, $V_{2j_1}(u_1)\otimes V_{2j_2}(u_2)$  is irreducible \cite{CP91}.

Secondly, the action of the eight elements  $A_1$, $A_2,\dots, B_{21}$ in \eqref{eq:defA1}-\eqref{eq:defAs21}  on $V_{2j_1}(u_1)\otimes V_{2j_2}(u_2)$ and  related polynomial eigenbases are studied. 
Let ${\cal D}_{z_1,z_2}$
denote the associative algebra of $q$-difference operators $T_\pm^{(1)},T_\pm^{(2)}$
with Laurent polynomials of $z_1,z_2$ as coefficients, such that
\begin{align}
  T^{(1)}_\pm f(z_1,z_2)=f(q^{\pm 1} z_1,z_2)\,,\qquad  T^{(2)}_\pm f(z_1,z_2)=f( z_1,q^{\pm 1}z_2)\ .\label{def:Tpm}
\end{align}
Consider the action of $\Uq$ generators on $\mathbb{C}_{2j_i}[z_i]$ via $\pi_{j_i}$ for $i=1,2$ as displayed  in \eqref{eq:pij}. The images via $\pi_{j_1}\otimes \pi_{j_2}$ of the eight elements  $A_1$, $A_2,\dots, B_{21}$ in \eqref{eq:defA1}-\eqref{eq:defAs21} are bivariate $q$-difference operators whose coefficients are Laurent polynomials in $z_1,z_2$.
Let us denote by a calligraphic letter each corresponding $q$-difference operator: 
\begin{align}
\mathcal{A}_1=(\pi_{j_1}\otimes \pi_{j_2})(A_1),\ \cdots, \ \mathcal{A}_{12}=(\pi_{j_1}\otimes \pi_{j_2})(A_{12}),\ \cdots \ ,\mathcal{B}_{21}=(\pi_{j_1}\otimes \pi_{j_2})(B_{21})\, .\label{eq:Acalij}
\end{align}
Explicitly, these operators read, for $i=1,2$
\begin{align}
\label{eq:defAi}  &  \mathcal{A}_i= u_i z_i q^{j_i} +u_i^{-1}z_i^{-1}q^{-j_i} -u_i^{-1} q^{-j_i}z_i^{-1}(1-z_iu_i q^{-j_i}a)(1-z_iu_i q^{-j_i}a^{- 1}){T^{(i)}_+}^2 \ , \\
   &  \mathcal{B}_i= u_i^{-1} z_i q^{-j_i} +u_i z_i^{-1}q^{j_i} -u_iq^{j_i}z_i^{-1}(1-z_iu_i^{-1}q^{j_i}b)(1-z_iu_i^{-1}q^{j_i}b^{-1}){T^{(i)}_-}^{2} \ ,
\end{align}
and 
\beqa
&&{\cal A}_{12}={\cal A}_1+q^{-2j_1}{T^{(1)}_+}^2({\cal A}_2-\alpha)\ , \qquad {\cal B}_{12}={\cal B}_2+({\cal B}_1-\beta)q^{2j_2}{T^{(2)}_-}^{2}\ ,\label{eq:defcalA12}\\
&&{\cal A}_{21}={\cal A}_2+({\cal A}_1-\alpha)q^{-2j_2}{T^{(2)}_+}^{2}\ ,\qquad {\cal B}_{21}={\cal B}_1+q^{2j_1}{T^{(1)}_-}^2({\cal B}_2-\beta)\ ,\label{eq:defcalAs21}
\eeqa
where $T^{(i)}_\pm$ act as \eqref{def:Tpm} and we recall the parametrization \eqref{eq:paramab} for $\alpha,\beta$. 

From Corollary \ref{cor:comm}, and the fact that $\pi_{j_1}\otimes \pi_{j_2}$ is an algebra homomorphism, we prove that  
the following commutation relations hold:
    \begin{align}
    [\mathcal{A}_2,\mathcal{A}_{12}] =[\mathcal{B}_{12},\mathcal{B}_1]=[\mathcal{A}_1,\mathcal{A}_{21}]=[\mathcal{B}_2,\mathcal{B}_{21}]=[\mathcal{A}_1,\mathcal{B}_2]=[\mathcal{A}_2,\mathcal{B}_{1}]=0\,.
    \end{align}
   These six different pairs of commuting operators can be visualized on the Figure~\ref{fig:circ}.
\begin{figure}[htbp]
\centering
\begin{tikzpicture}[scale=1.3]
    \def\R{4cm}\def\Rt{2.15cm}
    \coordinate (P1) at (2,2);
     \coordinate (P2) at (-2,2);
      \coordinate (P3) at (-2,-2);
       \coordinate (P4) at (2,-2);
       \coordinate (P5) at (0,1);
         \coordinate (P6) at (0,-1);
     \node at (0,3) {$T$};   
      \node at (0,-3) {$\overline{T}$};  \node at (3,0)  {$\overline{S}$};
      \node at (-3,0)  {${S}$};
   \draw[dashed] (P1) to[bend right=45] (P2);
    \draw[double] (P2) to[bend right=45] (P3);
     \draw[dashed] (P3) to[bend right=45] (P4);
      \draw[double] (P4) to[bend right=45] (P1);
     \draw (P1)--(P5);   \draw (P2)--(P5); 
     \draw (P3)--(P6);   \draw (P4)--(P6); 
      \foreach \i in {1,...,6} {\draw[fill] (P\i) circle (0.07);}
    \node at (2.4,2.1) {$\cal B_1$};  
     \node at (1.6,2.1) {$\cal B_{12}$};  
    \node at (-2.4,2.1) {$\cal A_2$};    
     \node at (-1.6,2.1) {$\cal A_{12}$};  
     \node at (-2,1.7) {$\fu$};
      \node at (2,1.7) {$\fw$};
      \node at (-2,-1.7) {$\overline \fw$};
       \node at (2,-1.7) {$\overline \fu$};
     \node at (-2.4,-2.1) {$\cal B_2$};
      \node at (2.4,-2.1) {$\cal A_1$};   
         \node at (-1.6,-2.1) {$\cal B_{21}$};
      \node at (1.6,-2.1) {$\cal A_{21}$};   
      \node at (0.3,0.8) {$\cal A_2$};
       \node at (-0.3,0.8) {$\cal B_1$};
       \node at (0,1.3) {$\fv$};
        \node at (0,-1.3) {$\overline{\fv}$};
       \node at (0.3,-0.8) {$\cal B_2$};
       \node at (-0.3,-0.8) {$\cal A_1$};
\end{tikzpicture}
\caption{Two operators around the same vertex commute. Their associated eigenbases are also indicated at the corresponding vertex. The dashed (resp. doubled) arcs are associated with overlap coefficients given by $q$-shifted products of $q$-Racah polynomials. The simple lines are associated with overlap coefficients given by $q$-Racah polynomials.\label{fig:circ}}
\end{figure}

We now turn to the construction of 
bivariate polynomial eigenbases which diagonalize simultaneously the above listed pairs of mutually commuting operators. To this aim, let us introduce the sequence of polynomials in the variables $z_1,z_2$:
\begin{align}
\label{eq:defu} &   \fu_{n_1,n_2}(z_1,z_2)=p_{n_1}(z_1u_1q^{-j_1};aq^{2n_2-2j_2},j_1,q)\;p_{n_2}(z_2u_2q^{-j_2};a,j_2,q)\ ,\\
 &   \fv_{n_1,n_2}(z_1,z_2)=p_{n_1}(z_1q^{j_1}u_1^{-1};b^{-1},j_1,q^{-1})\;p_{n_2}(z_2u_2q^{-j_2};a,j_2,q)\ ,\\
 &   \fw_{n_1,n_2}(z_1,z_2)=p_{n_1}(z_1q^{j_1}u_1^{-1};b^{-1},j_1,q^{-1})\;p_{n_2}(z_2q^{j_2}u_2^{-1};b^{-1}q^{2j_1-2n_1},j_2,q^{-1})\ ,\\
&   \overline{\fu}_{n_1,n_2}(z_1,z_2)=p_{n_1}(z_1u_1q^{-j_1};a,j_1,q)\;p_{n_2}(z_2u_2q^{-j_2};aq^{2n_1-2j_1},j_2,q)\ ,\\
 &   \overline{\fv}_{n_1,n_2}(z_1,z_2)=p_{n_1}(z_1u_1q^{-j_1};a,j_1,q)\;p_{n_2}(z_2q^{j_2}u_2^{-1};b^{-1},j_2,q^{-1})\ ,\\
 \label{eq:defbw}&   \overline{\fw}_{n_1,n_2}(z_1,z_2)=p_{n_1}(z_1q^{j_1}u_1^{-1};b^{-1}q^{2j_2-2n_2},j_1,q^{-1})\;p_{n_2}(z_2q^{j_2}u_2^{-1};b^{-1},j_2,q^{-1})\,,
\end{align}
where 
\begin{align}
    p_{n}(z;a,j,q)=\left(za;q^2\right)_{n}\left(z/a;q^2\right)_{2j-n}\ .\label{eq:defpn}
\end{align}
\begin{prop}\label{pr:eigA2B21}  The following eigenvalue problems hold for $\fu_{n_1,n_2}$,
\begin{align}
 & \mathcal{A}_{2}  \fu_{n_1,n_2}
=(q^{2n_2-2j_2}a+q^{2j_2-2n_2}a^{-1})\fu_{n_1,n_2}\ ,\label{eq:A2u}\\
   & \mathcal{A}_{12}  \fu_{n_1,n_2}
=(q^{2n_1+2n_2-2j_1-2j_2}a+q^{2j_1+2j_2-2n_1-2n_2}a^{-1})\fu_{n_1,n_2}\ ,\label{eq:specA12}
\end{align}
for $\fv_{n_1,n_2}$,
\begin{align}
    \mathcal{A}_{2}  \fv_{n_1,n_2}
=(q^{2n_2-2j_2}a+q^{2j_2-2n_2}a^{-1})\fv_{n_1,n_2}\ ,\\
 \mathcal{B}_{1}  \fv_{n_1,n_2}
=(q^{2n_1-2j_1}b+q^{2j_1-2n_1}b^{-1})\fv_{n_1,n_2}\ ,
\end{align}
for $\fw_{n_1,n_2}$,
\begin{align}
\label{eq:B1w} &\mathcal{B}_{1}  \fw_{n_1,n_2}
=(q^{2n_1-2j_1}b+q^{2j_1-2n_1}b^{-1})\fw_{n_1,n_2}\ ,\\
\label{eq:B12w} &\mathcal{B}_{12}  \fw_{n_1,n_2}
=(q^{2n_1+2n_2-2j_1-2j_2}b+q^{2j_1+2j_2-2n_1-2n_2}b^{-1})\fw_{n_1,n_2}\ ,
\end{align}
for $\overline{\fu}_{n_1,n_2}$,
\begin{align}
 & \mathcal{A}_{1}  \overline{\fu}_{n_1,n_2}
=(q^{2n_1-2j_1}a+q^{2j_1-2n_1}a^{-1})\overline{\fu}_{n_1,n_2}\ ,\\
   & \mathcal{A}_{21}  \overline{\fu}_{n_1,n_2}
=(q^{2n_1+2n_2-2j_1-2j_2}a+q^{2j_1+2j_2-2n_1-2n_2}a^{-1})\overline{\fu}_{n_1,n_2}\ ,
\end{align}
for $\overline{\fv}_{n_1,n_2}$,
\begin{align}
    \mathcal{A}_{1}  \overline{\fv}_{n_1,n_2}
=(q^{2n_1-2j_1}a+q^{2j_1-2n_1}a^{-1})\overline{\fv}_{n_1,n_2}\ ,\\
 \mathcal{B}_{2}  \overline{\fv}_{n_1,n_2}
=(q^{2n_2-2j_2}b+q^{2j_2-2n_2}b^{-1})\overline{\fv}_{n_1,n_2}\ ,
\end{align}
for $\overline{\fw}_{n_1,n_2}$,
\begin{align}
 &\mathcal{B}_{2}  \overline{\fw}_{n_1,n_2}
=(q^{2n_2-2j_2}b+q^{2j_2-2n_2}b^{-1})\overline{\fw}_{n_1,n_2}\ ,\\
 &\mathcal{B}_{21}  \overline{\fw}_{n_1,n_2}
=(q^{2n_1+2n_2-2j_1-2j_2}b+q^{2j_1+2j_2-2n_1-2n_2}b^{-1})\overline{\fw}_{n_1,n_2}\ .\label{eq:specB21}
\end{align}
\end{prop}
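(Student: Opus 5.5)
The plan is to reduce every statement to the univariate eigenvalue problems \eqref{eq:actA}, \eqref{eq:actB}, using that each operator acts in only one variable or is a simple combination of one-variable operators. The key observation is that, in the notation \eqref{eq:defpn}, $\fu_n(z)=p_n(zuq^{-j};a,j,q)$ and $\fv_n(z)=p_n(zq^ju^{-1};b^{-1},j,q^{-1})$. The univariate results should be read as identities that hold for arbitrary parameters $a,b\in\mathbb{C}^*$ and $u$, since they are polynomial identities in $z$.

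\textbf{One-variable operators.} The first eigenvalue equation of each pair involves an operator acting in a single variable, and that variable's factor in the basis vector does not depend on the other index. These cases are: $\mathcal{A}_2$ on $\fu,\fv$; $\mathcal{B}_1$ on $\fv,\fw$; $\mathcal{A}_1$ on $\overline\fu,\overline\fv$; and $\mathcal{B}_2$ on $\overline\fv,\overline\fw$. Each follows directly from \eqref{eq:actA} or \eqref{eq:actB}, with $(u,j,z)\to(u_i,j_i,z_i)$, because the other factor acts as a constant.

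\textbf{The coproduct operators.} Take \eqref{eq:specA12} as the model case. Write $\fu_{n_1,n_2}=f(z_1)g(z_2)$, where $g=p_{n_2}(z_2u_2q^{-j_2};a,j_2,q)$ and $f=p_{n_1}(z_1u_1q^{-j_1};a',j_1,q)$ with $a'=aq^{2n_2-2j_2}$. By \eqref{eq:defcalA12},
\[
\mathcal{A}_{12}\fu=(\mathcal{A}_1f)\,g+q^{-2j_1}\big(T_+^{(1)2}f\big)\big((\mathcal{A}_2-\alpha)g\big).
\]
By the previous step, $(\mathcal{A}_2-\alpha)g=(a'+a'^{-1}-a-a^{-1})g$. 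The task is then to show that
\[
\mathcal{A}_1 f+(a'+a'^{-1}-a-a^{-1})q^{-2j_1}T_+^{(1)2}f=\lambda f,
\qquad \lambda=q^{2n_1-2j_1}a'+q^{2j_1-2n_1}a'^{-1}.
\]
Inspecting \eqref{eq:defAi}, the only $a$-dependence of $\mathcal{A}_1$ is in the coefficient of $T_+^{(1)2}$:
\[
-u_1^{-1}q^{-j_1}z_1^{-1}\big(1+z_1^2u_1^2q^{-2j_1}\big)+(a+a^{-1})q^{-2j_1}.
\]
Therefore $\mathcal{A}_1(a)+(a'+a'^{-1}-a-a^{-1})q^{-2j_1}T_+^{(1)2}=\mathcal{A}_1(a')$, that is, the same operator with $a$ replaced by $a'$. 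Applying \eqref{eq:actA} with parameter $a'$ then gives $\lambda$, which is exactly the claimed eigenvalue.

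\textbf{Remaining cases.} The same mechanism handles the other three coproduct operators. For $\mathcal{B}_{12}$ on $\fw$, use that the $b$-dependence of $\mathcal{B}_2$ sits only in $(b+b^{-1})q^{2j_2}T_-^{(2)2}$. Then absorb $(\mathcal{B}_1-\beta)$ evaluated on the $z_1$ factor, which gives a shift $b^{-1}\to b^{-1}q^{2j_1-2n_1}$, consistent with the definition of $\fw$. For $\mathcal{A}_{21}$ on $\overline\fu$ and $\mathcal{B}_{21}$ on $\overline\fw$, the roles of the two variables are exchanged, as in \eqref{eq:defcalAs21}. Operator ordering is harmless in every case because the operators act on different variables.

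\textbf{Expected obstacle.} The only delicate point is bookkeeping the $q\to q^{-1}$, $b\to b^{-1}$ conventions in the $\fw$ cases. I will check that $p_n(z;b^{-1},j,q^{-1})$ with shifted parameter $b^{-1}q^{2j_1-2n_1}$ indeed yields the eigenvalue $q^{2n_1+2n_2-2j_1-2j_2}b+q^{2j_1+2j_2-2n_1-2n_2}b^{-1}$ under the shifted operator $\mathcal{B}_2$.
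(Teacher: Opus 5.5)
Your proof is correct, and it takes a somewhat different route from the paper for the coproduct operators.

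The paper proves \eqref{eq:A2u} by brute force. It computes the ratio produced by ${T^{(2)}_+}^2$ acting on the $q$-Pochhammer factor, inserts it into the explicit form \eqref{eq:defAi}, and simplifies. All remaining cases, including $\mathcal{A}_{12},\mathcal{B}_{12},\mathcal{A}_{21},\mathcal{B}_{21}$, are left as ``similar'' direct manipulations of shifted products.

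You treat the one-site operators by quoting the univariate results \eqref{eq:actA}, \eqref{eq:actB}. For the coproduct operators you use a structural observation: $\alpha$ enters $\mathcal{A}_i$ only through $\alpha q^{-2j_i}{T^{(i)}_+}^2=\pi_{j_i}(\alpha K^2)$, and similarly $\beta$ enters $\mathcal{B}_i$ only through $\beta q^{2j_i}{T^{(i)}_-}^2$. Consequently, on an eigenvector of $\mathcal{A}_2$ with eigenvalue $a'+a'^{-1}$, the operator $\mathcal{A}_{12}$ acts as $\mathcal{A}_1$ with $a$ replaced by $a'$. I checked all four coproduct cases, including the $q\to q^{-1}$, $b\to b^{-1}$ bookkeeping for $\fw$ and $\overline{\fw}$. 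The shifts $a'=aq^{2n_2-2j_2}$, $b'^{-1}=b^{-1}q^{2j_1-2n_1}$ and their mirror images land exactly on the factors in \eqref{eq:defu}--\eqref{eq:defbw} and give the stated eigenvalues.

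What each approach buys:
\begin{itemize}
\item Your argument needs no two-variable simplification. It also explains why the entangled parameters appear in the bases at all. Already in $\Uq\otimes\Uq$, $A_{12}=A_1+K_1^2(A_2-\alpha)$ restricted to an eigenspace of $A_2$ is simply $A_1$ with $\alpha$ replaced by that eigenvalue.
\item The paper's computation is self-contained, since it effectively re-derives \eqref{eq:actA} rather than importing it from \cite{WZ}.
\item Your reliance on \eqref{eq:actA}, \eqref{eq:actB} for arbitrary $a,b\in\mathbb{C}^*$ is legitimate. These are identities in $z$, and $\mathcal{A},\mathcal{B}$ depend on $a,b$ only through $\alpha,\beta$.
\end{itemize}
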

\proof Let us provide the proof of \eqref{eq:A2u}. Other relations are proven similarly.
The operator ${T^{(2)}_+}^{2}$ in the definition \eqref{eq:defAi} of ${\cal A}_2$ acts non--trivially only on the second factor of $\fu_{n_1,n_2}(z_1,z_2)$ in \eqref{eq:defu}:
\begin{align}
\nonumber    {T^{(2)}_+}^{2}p_{n_2}(z_2u_2q^{-j_2};a,j_2,q)&=p_{n_2}(z_2u_2q^{-j_2+2};a,j_2,q)\\
 \nonumber   &=(z_2u_2q^{-j_2+2}a;q^2)_{n_2}(z_2u_2q^{-j_2+2}/a;q^2)_{2j_2-n_2}\\
    &=\frac{(1-z_2u_2q^{2n_2-j_2}a)(1-z_2u_2q^{3j_2-2n_2}/a)}{(1-z_2u_2q^{-j_2}a)(1-z_2u_2q^{-j_2}/a)}p_{n_2}(z_2u_2q^{-j_2};a,j_2,q)\,.
\end{align}
 Using the previous result together with the explicit form \eqref{eq:defAi} of ${\cal A}_2$, we prove \eqref{eq:A2u} after simplifications.
\endproof

   Assume the parameters $a,b$  are such that the couples of eigenvalues in the previous proposition have no multiplicities. Then, the bivariate polynomials \eqref{eq:defu}-\eqref{eq:defbw} are linearly independent.  It follows that
 \beqa
 \{\fu_{n_1,n_2},\; \fv_{n_1,n_2},\;\fw_{n_1,n_2},\;\overline{\fu}_{n_1,n_2},\;\overline{\fv}_{n_1,n_2},\;\overline{\fw}_{n_1,n_2}\}_{n_1,n_2=0}^{2j_1,2j_2} \label{eq:6bases}
 \eeqa
 form six `distinguished' bases of $V_{2j_1}(u_1)\otimes V_{2j_2}(u_2)$.
These different eigenbases associated with each pair of mutually commuting operators can be seen on Figure~\ref{fig:circ}.

We obtain our first main result.

\begin{thm}\label{thm:overlaps}  The overlap coefficients between the  polynomial eigenbases \eqref{eq:defu}-\eqref{eq:defbw}  of the pairs $({\cal A}_2,{\cal A}_{12})$, $({\cal A}_2,{\cal B}_{1})$, $({\cal B}_1,{\cal B}_{12})$, $({\cal A}_1,{\cal A}_{21})$, $({\cal A}_1,{\cal B}_{2})$ and $({\cal B}_2,{\cal B}_{21})$ are given by
\begin{align}
& \fw_{k_1,k_2}  = \sum_{\ell_1=0}^{2j_1}\;\sum_{\ell_2=0}^{2j_2}
  T_{k_1,k_2}(\ell_1,\ell_2)\fu_{\ell_1,\ell_2}\ ,\label{eq:overwu} \\
&   \overline{\fw}_{k_1,k_2}  = \sum_{\ell_1=0}^{2j_1}\;\sum_{\ell_2=0}^{2j_2}
  S_{k_1,k_2}(\ell_1,\ell_2)\fu_{\ell_1,\ell_2}\ ,\label{eq:overwbaru}\\
   & \overline{\fw}_{k_1,k_2}  =\sum_{\ell_1=0}^{2j_1}\;\sum_{\ell_2=0}^{2j_2}
  \overline{T}_{k_1,k_2}(\ell_1,\ell_2)\overline{\fu}_{\ell_1,\ell_2}\ ,\\
  & \fw_{k_1,k_2}  = \sum_{\ell_1=0}^{2j_1}\;\sum_{\ell_2=0}^{2j_2}
  \overline{S}_{k_1,k_2}(\ell_1,\ell_2) \overline{\fu}_{\ell_1,\ell_2}\ ,\label{eq:overwubar}
\end{align}
where
\begin{align}
 &  T_{k_1,k_2}(\ell_1,\ell_2)= R_{k_1}(\ell_1;aq^{2\ell_2-2j_2},b,u_1,j_1,q) R_{k_2}(\ell_2;a,bq^{2k_1-2j_1},u_2,j_2,q)\ ,\label{eq:T} \\
 &  S_{k_1,k_2}(\ell_1,\ell_2)= R_{k_1}(\ell_1;aq^{2\ell_2-2j_2},bq^{2k_2-2j_2},u_1,j_1,q) R_{k_2}(\ell_2;a,b,u_2,j_2,q)\ ,\label{eq:S}\\
  &  \overline{T}_{k_1,k_2}(\ell_1,\ell_2)=R_{k_1}(\ell_1;a,bq^{2k_2-2j_2},u_1,j_1,q) R_{k_2}(\ell_2;aq^{2\ell_1-2j_1},b,u_2,j_2,q)\ , \label{eq:Tbar}\\
 &  \overline{S}_{k_1,k_2}(\ell_1,\ell_2)=R_{k_1}(\ell_1;a,b,u_1,j_1,q) R_{k_2}(\ell_2;aq^{2\ell_1-2j_1},bq^{2k_1-2j_1},u_2,j_2,q) \ .\label{eq:Sbar}
\end{align}
\end{thm}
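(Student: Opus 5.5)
The plan is to reduce every bivariate overlap to two successive applications of the univariate expansion \eqref{eq:overN1}, one in each variable. The key observation is that each basis \eqref{eq:defu}--\eqref{eq:defbw} is a product of two univariate polynomials of the form $p_n$ in \eqref{eq:defpn}. In each of them one factor has a parameter shifted by a power of $q$ depending on the index of the other factor. The univariate bases of Section~\ref{sec:qdiff} are $\fu_n(z)=p_n(zuq^{-j};a,j,q)$ and $\fv_n(z)=p_n(zq^ju^{-1};b^{-1},j,q^{-1})$, and \eqref{eq:overN1} reads
\[
p_k(zq^ju^{-1};b^{-1},j,q^{-1})=\sum_{\ell=0}^{2j}R_k(\ell;a,b,u,j,q)\,p_\ell(zuq^{-j};a,j,q).
\]
This identity is a polynomial identity in $z$, valid for arbitrary generic $a,b$. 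So we may specialize $a\to aq^{2m}$ or $b\to bq^{2m}$ freely, and the argument is pure algebra on polynomials, without reference to the operators.

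For \eqref{eq:overwu}, we start from $\fw_{k_1,k_2}=p_{k_1}(z_1q^{j_1}u_1^{-1};b^{-1},j_1,q^{-1})\,p_{k_2}(z_2q^{j_2}u_2^{-1};b^{-1}q^{2j_1-2k_1},j_2,q^{-1})$ and expand the $z_2$ factor first. Its parameter $b^{-1}q^{2j_1-2k_1}$ is $(bq^{2k_1-2j_1})^{-1}$, so the univariate identity with $(a,bq^{2k_1-2j_1},u_2,j_2)$ gives a sum over $\ell_2$ with coefficients $R_{k_2}(\ell_2;a,bq^{2k_1-2j_1},u_2,j_2,q)$ times $p_{\ell_2}(z_2u_2q^{-j_2};a,j_2,q)$. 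Next, for each fixed $\ell_2$, we expand the $z_1$ factor $p_{k_1}(z_1q^{j_1}u_1^{-1};b^{-1},j_1,q^{-1})$ in the basis $p_{\ell_1}(z_1u_1q^{-j_1};aq^{2\ell_2-2j_2},j_1,q)$. This uses the univariate identity with $a$ replaced by $aq^{2\ell_2-2j_2}$, and produces $R_{k_1}(\ell_1;aq^{2\ell_2-2j_2},b,u_1,j_1,q)$. Multiplying out gives exactly $\fu_{\ell_1,\ell_2}$ with coefficient \eqref{eq:T}.

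For \eqref{eq:overwbaru} we proceed in the same way. In $\overline{\fw}_{k_1,k_2}$ we first expand the $z_2$ factor, which has parameter $b^{-1}$, giving $R_{k_2}(\ell_2;a,b,u_2,j_2,q)$. We then expand the $z_1$ factor, with parameter $(bq^{2k_2-2j_2})^{-1}$, in the basis with parameter $aq^{2\ell_2-2j_2}$; this gives the first factor of \eqref{eq:S}. The barred formulas \eqref{eq:Tbar}, \eqref{eq:Sbar} follow by the symmetric argument with the roles of $z_1,z_2$ exchanged: expand the unshifted factor first, then the other factor, now with $a$ shifted by $q^{2\ell_1-2j_1}$.

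The main obstacle is justifying the univariate expansion at shifted parameters. The genericity hypotheses behind \eqref{eq:overN1} and \cite[Eq.\,(2.16)]{Ros07} must hold for $aq^{2\ell_2-2j_2}$ and $bq^{2k_1-2j_1}$, so that the denominators in \eqref{eq:Rac1} do not vanish and the shifted $\fu$'s form a basis. I would handle this by noting that both sides of the expansion are rational in $a,b$. The identity therefore holds on a Zariski-dense set and extends to all parameters where the bivariate bases are bases, which is the standing assumption preceding the theorem.
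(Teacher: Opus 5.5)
Your proposal is correct and is essentially the paper's proof. Both arguments apply the univariate expansion \eqref{eq:overN1} twice: once with a shifted $b$ in one variable, and once with $a$ shifted by $q^{2\ell_2-2j_2}$ (or $q^{2\ell_1-2j_1}$) in the other. Both rely on the fact that the unexpanded factor (e.g.\ the $z_1$-factor of $\fw_{k_1,k_2}$) does not depend on the summation index.

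The only difference is direction: the paper collapses the double sum on the right-hand side, while you expand the left-hand side. Your remark on specializing the rational identity at shifted parameters makes explicit a genericity point that the paper leaves implicit.
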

\proof Recall the definitions \eqref{eq:defu}-\eqref{eq:defbw} with \eqref{eq:defpn}. The r.h.s. of \eqref{eq:overwu} evaluated on $(z_1,z_2)$ reads
\begin{align}
 \sum_{\ell_2=0}^{2j_2} 
 & R_{k_2}(\ell_2;a,bq^{2k_1-2j_1},u_2,j_2,q)
  p_{\ell_2}(z_2u_2q^{-j_2};a,j_2,q) \nonumber \\
 \times& \sum_{\ell_1=0}^{2j_1} 
  R_{k_1}(\ell_1;aq^{2\ell_2-2j_2},b,u_1,j_1,q)
  p_{\ell_1}(z_1u_1q^{-j_1};aq^{2\ell_2-2j_2},j_1,q)\,, 
\end{align}
which becomes, using \eqref{eq:overN1}  with \eqref{eq:un}, \eqref{eq:vn}:
\begin{align}
     \sum_{\ell_2=0}^{2j_2} 
 & R_{k_2}(\ell_2;a,bq^{2k_1-2j_1},u_2,j_2,q)
  p_{\ell_2}(z_2u_2q^{-j_2};a,j_2,q) p_{\ell_1}(z_1q^{j_1}/u_1;1/b,j_1,q^{-1})\,,
\end{align}
and, using again  \eqref{eq:overN1}, reduces to
\begin{align}
  p_{k_2}(z_2q^{j_2}/u_2;q^{2j_1-2k_1}/b,j_2,q^{-1}) p_{k_1}(z_1q^{j_1}/u_1;1/b,j_1,q^{-1})=\fw_{k_1,k_2}(z_1,z_2)\,.
\end{align}
The other equalities are proven similarly.
\endproof
These different overlap coefficients can also be visualized on Figure~\ref{fig:circ}.
 Note that the four overlap coefficients, given in the previous Theorem, are expressed as a product of two $q$-Racah univariate polynomials. Although both have six parameters $a,b,u_1,u_2,j_1,j_2$, they essentially differ by the structure of the entanglement of their arguments. We study in Subsection \ref{ssec:S} the functions $S_{k_1,k_2}$ and $\overline{S}_{k_1,k_2}$ and show their link with tridiagonal (TD) pairs. The functions $T_{k_1,k_2}$ and $\overline{T}_{k_1,k_2}$ are  studied in Subsection \ref{ssec:T}, and are connected to factorized Leonard (FL) pairs.

To conclude this section, let us mention that different bivariate analogs of the split basis \eqref{eq:sn} can be constructed. For instance, let us introduce the new sequence of bivariate polynomials
\begin{align}
    \fs_{n_1,n_2}(z_1,z_2)&=
    (z_1a q^{-j_1+2n_2-2j_2}u_1;q^2)_{n_1}
    (z_1b q^{j_1+2n_2-2j_2}/u_1;q^{-2})_{2j_1-n_1}\label{eq:sn12}\\
    &\qquad \times
    (z_2 a q^{-j_2}u_2;q^2)_{n_2}
    (z_2 b q^{j_2}/u_2;q^{-2})_{2j_2-n_2}\ .\nonumber
\end{align}
The following proposition is adapted from \cite[Ex.\,5.2]{BVZ16}. The proof being straightforward, we skip it.
\begin{prop}\label{prop:actAB}
The actions of ${\cal A}_{12}$ and ${\cal B}_{21}$ on the vectors $\fs_{n_1,n_2}$   take the following upper/lower diagonal structure, respectively:
\begin{align}  
\label{eq:A12snn}   \mathcal{A}_{12}\fs_{n_1,n_2}&=
    (q^{2n_1+2n_2-2j_1-2j_2}a+q^{2j_1+2j_2-2n_1-2n_2}a^{-1})\fs_{n_1,n_2}\\
    &\ \ + (q^{2j_1-n_1}-q^{-2j_1+n_1})(bq^{-2j_1-2j_2+2+2n_2+n_1}/u_1^2-q^{-2n_2-n_1+2j_2}/a)\fs_{n_1+1,n_2}\nonumber\\
     &\ \ +(q^{2j_2-n_2}-q^{-2j_2+n_2})(bq^{n_2-2j_1-2j_2+2}/u_2^2-q^{-n_2-2j_1}/a)
\fs_{n_1,n_2+1}\ ,\nonumber\\
\mathcal{B}_{21}\fs_{n_1,n_2}&=
(q^{2n_1+2n_2-2j_1-2j_2}b+q^{2j_1+2j_2-2n_1-2n_2}b^{-1})\fs_{n_1,n_2}\label{eq:B21snn}\\
&\ \ +(q^{n_1}-q^{-n_1})( q^{2j_1+2j_2-n_1-2n_2}/b-au_1^2q^{n_1+2n_2-2j_2-2} )\fs_{n_1-1,n_2}\nonumber\\
&\ \ +(q^{n_2}-q^{-n_2})(q^{2j_1+2j_2-n_2}/b-au_2^2 q^{n_2+2j_1-2})\fs_{n_1,n_2-1}\ .\nonumber
\end{align}
\end{prop}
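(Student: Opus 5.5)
The plan is to reduce everything to the univariate actions \eqref{eq:actAsplit}, \eqref{eq:actBsplit}, in the same way the eigenvalue problems of Proposition \ref{pr:eigA2B21} reduce to \eqref{eq:actA}. The key observation is that $\fs_{n_1,n_2}$ factorizes as
\[
\fs_{n_1,n_2}=\tilde{\fs}^{(1)}_{n_1}(z_1)\,\fs^{(2)}_{n_2}(z_2).
\]
Here $\fs^{(2)}_{n_2}$ is the univariate split vector \eqref{eq:sn} in the variable $z_2$ with $(u,j)\to(u_2,j_2)$. The factor $\tilde{\fs}^{(1)}_{n_1}$ is the split vector in $z_1$ with $(u,j)\to(u_1,j_1)$ and with $(a,b)$ replaced by $(aq^{2n_2-2j_2},\,bq^{2n_2-2j_2})$. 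The parameter $a$ enters $\mathcal{A}_1$ only through $\alpha=a+a^{-1}$, which is symmetric under $a\to a^{-1}$. So the shift must be understood carefully: I will check that $\mathcal{A}_1$ acts on $\tilde{\fs}^{(1)}_{n_1}$ as the univariate operator with eigenvalue parameter $a$, and not with the shifted one.

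For $\mathcal{A}_{12}$, I use \eqref{eq:defcalA12}:
\[
\mathcal{A}_{12}=\mathcal{A}_1+q^{-2j_1}{T^{(1)}_+}^2(\mathcal{A}_2-\alpha).
\]
The first step applies $\mathcal{A}_2-\alpha$ to the $z_2$ factor using \eqref{eq:actAsplit}. This gives
\[
(\lambda_{n_2}-\alpha)\fs_{n_1,n_2}+c_{n_2}\,\tilde{\fs}^{(1)}_{n_1}\fs^{(2)}_{n_2+1},
\qquad \lambda_{n_2}=q^{2n_2-2j_2}a+q^{2j_2-2n_2}a^{-1}.
\]
The coefficient $c_{n_2}$ multiplied by $q^{-2j_1}$ should give the $\fs_{n_1,n_2+1}$ coefficient in \eqref{eq:A12snn}. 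The second step handles the fact that $\tilde{\fs}^{(1)}_{n_1}$ depends on $n_2$. Since $q^{4}$ shifts are built in, I expect
\[
{T^{(1)}_+}^2\,\tilde{\fs}^{(1)}_{n_1}\big|_{n_2}=\tilde{\fs}^{(1)}_{n_1}\big|_{n_2+1}
\]
exactly: ${T^{(1)}_+}^2$ multiplies $z_1$ by $q^2$, which is the same as the replacement $(a,b)\to(aq^2,bq^2)$. This makes the off-diagonal term land on $\fs_{n_1,n_2+1}$.

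The remaining diagonal-type piece is
\[
\bigl[\mathcal{A}_1+q^{-2j_1}(\lambda_{n_2}-\alpha){T^{(1)}_+}^2\bigr]\tilde{\fs}^{(1)}_{n_1}.
\]
This is a univariate operator in $z_1$ of the same form as $\mathcal{A}$, namely multiplication terms plus a ${T_+}^2$ term. Its ${T_+}^2$ coefficient is
\[
-u_1^{-1}q^{-j_1}z_1^{-1}(1-z_1u_1q^{-j_1}a)(1-z_1u_1q^{-j_1}a^{-1})+q^{-2j_1}(\lambda_{n_2}-\alpha).
\]
I expect this to recombine into
\[
-u_1^{-1}q^{-j_1}z_1^{-1}(1-z_1u_1q^{-j_1}a')(1-z_1u_1q^{-j_1}a'^{-1})\cdot(\text{const})
\]
up to a multiplication part, with $a'=aq^{2n_2-2j_2}$. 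In other words, the operator becomes, up to a scalar and a shift of the multiplicative part, the univariate $\mathcal{A}$ with $a\to a'$. This is the same mechanism that makes \eqref{eq:specA12} hold for $\fu_{n_1,n_2}$ in Proposition \ref{pr:eigA2B21}. Applying \eqref{eq:actAsplit} with $(a,b)\to(a',bq^{2n_2-2j_2})$ then produces the eigenvalue $q^{2n_1-2j_1}a'+q^{2j_1-2n_1}a'^{-1}$, which matches the diagonal term of \eqref{eq:A12snn}, together with the $\fs_{n_1+1,n_2}$ term with the displayed coefficient.

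The argument for $\mathcal{B}_{21}$ is symmetric. It uses
\[
\mathcal{B}_{21}=\mathcal{B}_1+q^{2j_1}{T^{(1)}_-}^2(\mathcal{B}_2-\beta)
\]
together with \eqref{eq:actBsplit}, and the shift ${T^{(1)}_-}^2$ maps the $n_2$-dependence to $n_2-1$. The main obstacle is the recombination step: verifying the operator identity that turns $\mathcal{A}_1+q^{-2j_1}(\lambda_{n_2}-\alpha){T^{(1)}_+}^2$ into a rescaled univariate operator with shifted parameter. This is the step where the exact coefficients must match. I would first check it directly by acting on $\tilde{\fs}^{(1)}_{n_1}$ with ratio computations like those in the proof of \eqref{eq:A2u}, and then confirm the off-diagonal coefficients by comparing leading terms in $z_1$.
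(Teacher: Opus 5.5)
Your proposal is correct. The paper omits this proof as straightforward, and your reduction is exactly the direct verification it has in mind: you factorize $\fs_{n_1,n_2}$, note that ${T^{(1)}_\pm}^2$ realizes $n_2\to n_2\pm 1$ on the $z_1$-factor, and apply \eqref{eq:actAsplit}, \eqref{eq:actBsplit} with shifted parameters $(aq^{2n_2-2j_2},bq^{2n_2-2j_2})$. Carried out, it reproduces every displayed coefficient.

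The step you flag as the main obstacle is in fact exact, with no rescaling and no extra multiplicative term. The parameter $\alpha$ enters $\mathcal{A}_1$ only through $q^{-2j_1}\alpha\,{T^{(1)}_+}^2$, so $\mathcal{A}_1+q^{-2j_1}(\lambda_{n_2}-\alpha){T^{(1)}_+}^2$ is literally $\mathcal{A}_1$ with $\alpha$ replaced by $\lambda_{n_2}=a'+a'^{-1}$. Likewise, $\mathcal{B}_1+q^{2j_1}(q^{2n_2-2j_2}b+q^{2j_2-2n_2}b^{-1}-\beta){T^{(1)}_-}^2$ is $\mathcal{B}_1$ with $\beta$ replaced by $b'+b'^{-1}$, where $b'=bq^{2n_2-2j_2}$.
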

For generic parameters $a,b,u_1,u_2$, $\{\fs_{n_1,n_2}\}_{n_1,n_2=0}^{2j_1,2j_2}$ form the so-called split basis of $V_{2j_1}(u_1)\otimes V_{2j_2}(u_2)$  (for details, see \cite[Lem.\,5.1]{BVZ16}).
\begin{rem}\label{rem:secsplit} A different sequence of bivariate polynomials $\bar\fs_{n_1,n_2}(z_1,z_2)$ 
can also be introduced on which $\mathcal{A}_{21}$ and $\mathcal{B}_{12}$ act as upper/lower diagonal matrices, respectively. Using ${\cal A}_{21}=(\sigma \circ {\cal A}_{12})|_{u_1\leftrightarrow u_2,j_1\leftrightarrow j_2,z_1\leftrightarrow z_2}$, ${\cal B}_{12}=(\sigma \circ {\cal B}_{21})|_{u_1\leftrightarrow u_2,j_1\leftrightarrow j_2,z_1\leftrightarrow z_2}$ and Proposition \ref{prop:actAB} we get
\beqa
\bar\fs_{n_1,n_2}(z_1,z_2)= \fs_{n_2,n_1}(z_2,z_1)|_{u_1\leftrightarrow u_2,j_1\leftrightarrow j_2}\ .
\eeqa
with \eqref{eq:sn12}.
Thus, $\{\bar\fs_{n_1,n_2}\}_{n_1=0,n_2=0}^{2j_1,2j_2}$  form a split basis of $V_{2j_1}(u_1)\otimes V_{2j_2}(u_2)$. 
\end{rem}

\section{Tridiagonal pairs, factorized Leonard pairs \& overlaps $S,T$} \label{sec:TDFLP}
In this Section, in a first part we show how the pairs of $q$-difference operators $({\cal A}_{12},{\cal B}_{21})$ and $({\cal A}_{21},{\cal B}_{12})$ provide explicit examples of TD pairs of $q$-Racah type, provided certain necessary and sufficient conditions on the scalars $a,b$ and the evaluation parameters $u_1,u_2$ are satisfied; See Proposition \ref{prop:TD}. The main characteristics of TD pairs under consideration -- namely the diameter, shape and character formula -- are computed, see Proposition \ref{prop:diamA12} and Corollary \ref{cor:charTD}. The overlap coefficients ${S}_{k_1,k_2}(\ell_1,\ell_2)$ relating the respective eigenbases for each TD pair are identified, see Proposition \ref{prop:overTD}. 
In a second part, we enlighten  connections  with bivariate polynomials of Tratnik type \cite{T91a,T91b,GR05,I08}, the rank $2$ Askey--Wilson algebra \cite{PW17,DD18,GW22,CF23} and the concept of FL pairs \cite{CZ23}. The main results are Theorem \ref{th:recu2} and Propositions \ref{prop:wAW2}.  It is also conjectured that, under certain conditions,  $(\mathcal{A}_2,\mathcal{A}_{12};\mathcal{B}_{12},\mathcal{B}_1)$ gives a basis for a FL pair of type $B'_2$,
see  Conjecture \ref{conj:FPBp2}.

\subsection{TD pairs of $q$-Racah type and the overlap coefficients $S_{k_1,k_2}(\ell_1,\ell_2)$ \label{ssec:S}} 
TD pairs can be viewed as generalizations of Leonard pairs, and were first introduced in \cite{ITT99}. 
\begin{defn} \cite{ITT99}
\label{def:tdrecall}
Let $V$ denote
a vector space over ${\mathbb K}$ with finite positive dimension.
By a {\it Tridiagonal pair} (or {\it TD pair}) on $V$
we mean an ordered pair ${\textsf A}, {\textsf A}^*$, where
${\textsf A}:V\rightarrow V$ and ${\textsf A}^*:V\rightarrow V$ are linear transformations 
satisfying (i)--(iv) below.
\begin{enumerate}[label=(\roman*)]
\item ${\textsf A}^*$ and ${\textsf A}^*$ are both diagonalizable on $V$.
\item There exists an ordering $V_0, V_1,\ldots, V_d$ of the  
eigenspaces of ${\textsf A}$ such that 
\begin{equation}
{\textsf A}^* V_p \subseteq V_{p-1} + V_p+ V_{p+1} \qquad \qquad (0 \leq p \leq d),
\label{eq:tdrecall1}\nonumber
\end{equation}
where $V_{-1} = 0$, $V_{d+1}= 0$.
\item There exists an ordering $V^*_0, V^*_1,\ldots, V^*_\delta$ of
the  
eigenspaces of ${\textsf A}^*$ such that 
\begin{equation}
{\textsf A} V^*_p \subseteq V^*_{p-1} + V^*_p+ V^*_{p+1} \qquad \qquad (0 \leq p \leq \delta),
\label{eq:tdrecall2}\nonumber
\end{equation}
where $V^*_{-1} = 0$, $V^*_{\delta+1}= 0$.
\item There is no subspace $W$ of $V$ such  that  both ${\textsf A}W\subseteq W$,
${\textsf A}^*W\subseteq W$, other than $W=0$ and $W=V$.
\end{enumerate}
We say the above TD pair is over ${\mathbb K}$.
\end{defn}
\begin{rem}\label{rem:diam}
In \cite{ITT99}, it is shown that $d=\delta$ which common value is called the diameter of the pair, and that $\dim(V_p)=\dim(V_p^*)$ for each $p$, which common value is denoted $\rho_p$. The sequence  $(\rho_0,\rho_1,...,\rho_d)$ is called the shape of the pair. The polynomial
\beqa
ch(\lambda) = \sum_{p=0}^{d} \rho_p \lambda^p  \label{eq:char}
\eeqa
is also introduced, and is called the character of the pair.
\end{rem} 

TD pairs are classified according to the structure of the sequences of respective eigenvalues of ${\textsf A},{\textsf A}^*$. For eigenvalue sequences of the form $\theta_p=e e^{2p}+f e^{-2p}$ and $\theta_p^*=e^* e^{2p}+f^* e^{-2p}$ for $e,f,e^*,f^*\in{\mathbb C}^*$,  respectively, the TD pair is said to be of {\it $q$-Racah type} 
\cite[Def.\,3.1]{IT08}. As shown in \cite[Thm.\,3.7]{Ter03},
for $q$ not a root of unity, the $q$-Onsager algebra $O_q$ gives an algebraic framework for TD pairs of $q$-Racah type.

From now on, we set ${\mathbb K}={\mathbb C}$. For the proof of the following theorem, we need 
the concept of ``$q$-strings'' associated with evaluation representations of $\Loop$ \cite{CP91}. Define the set called a $q$-string
\beqa
S(d,v) =\{vq^{2i-d+1}|0\leq i \leq d-1\}\quad \mbox{for $v\in \mathbb{C}^*$, $d\in {\mathbb N}$}\ .
\eeqa
Two $q$-strings $S(d,v),S(d',v')$ are said to be in {\it general position} if either $S(d,v)\cup S(d',v')$ does not produce\footnote{This happens if $v^{-1}v'=q^{\pm i}$ 
for some $i\in \{|\ell-\ell'|+2,|\ell-\ell'|+4,\cdots, \ell+\ell'\}$. In this case, $S(\ell,v)$ and $S(\ell',v')$
are said to be {\it adjacent}. 
} a $q$-string $S(d'',v'')$, or $S(d,v)\subseteq S(d',v')$ or  $S(d,v) \supseteq S(d',v')$.
\begin{prop}\label{prop:TD} The pairs $({\cal A}_{12},{\cal B}_{21})$ and $({\cal A}_{21},{\cal B}_{12})$ are TD pairs of $q$-Racah type if and only if 
\begin{enumerate}[label=(\roman*)]
\item For any $\varepsilon_1,\varepsilon_2\in\{\pm 1\}$, the $q$-strings $S(2j_1,(q/u_1^2)^{\varepsilon_1})$, $S(2j_2,(q/u_2^2)^{\varepsilon_2})$ are in general position.
\item For $i=1,2$, $ab,a/b \notin S(2j_i,q/u_i^2)\cup S(2j_i,(q/u_i^2)^{-1})$.
\item $\pm a,\pm b \notin \{q^{-2j_1-2j_2+1},\cdots,q^{2j_1+2j_2-1}\}$.
\end{enumerate}
\end{prop}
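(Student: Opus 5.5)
We verify the four axioms of Definition \ref{def:tdrecall} for $({\cal A}_{12},{\cal B}_{21})$ on $V=V_{2j_1}(u_1)\otimes V_{2j_2}(u_2)$. The pair $({\cal A}_{21},{\cal B}_{12})$ is obtained by the symmetry $\sigma$ together with $u_1\leftrightarrow u_2$, $j_1\leftrightarrow j_2$ (Remark \ref{rem:secsplit}), and conditions (i)--(iii) are invariant under this exchange. The organizing tool is \cite[Thm.\,3.7]{Ter03} together with the known correspondence between TD pairs of $q$-Racah type and irreducible finite-dimensional modules of $O_q$. By Corollary \ref{cor:pairDG}, ${\cal A}_{12},{\cal B}_{21}$ satisfy the $q$-Dolan--Grady relations, so $V$ is an $O_q$-module via $W_0\mapsto{\cal A}_{12}$, $W_1\mapsto{\cal B}_{21}$. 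If both operators are diagonalizable and $V$ is irreducible as an $O_q$-module, the $q$-Dolan--Grady relations force the tridiagonal conditions (ii)--(iii) with eigenvalue sequences of $q$-Racah form \cite{Ter03}. Hence the whole statement reduces to two things: (a) diagonalizability with the expected spectra, and (b) irreducibility of $V$ under the algebra generated by ${\cal A}_{12},{\cal B}_{21}$.

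For (a), Proposition \ref{prop:actAB} shows that ${\cal A}_{12}$ (resp.\ ${\cal B}_{21}$) is upper (resp.\ lower) triangular in the split basis $\fs_{n_1,n_2}$. Its diagonal entries are $\theta_p=aq^{2p-2j_1-2j_2}+a^{-1}q^{2j_1+2j_2-2p}$ (resp.\ the same with $b$), where $p=n_1+n_2$. The eigenspaces $V_p$ are expected to be spanned by the $\fu_{n_1,n_2}$ with $n_1+n_2=p$ (Proposition \ref{pr:eigA2B21}), and the $V^*_p$ by the $\overline{\fw}_{n_1,n_2}$. Diagonalizability with $d=2j_1+2j_2$ distinct eigenvalues requires $\theta_p\neq\theta_{p'}$ for $p\neq p'$, i.e.\ $a^2\neq q^{2m}$ for $m$ ranging over $-2j_1-2j_2+1,\dots,2j_1+2j_2-1$. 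This is exactly condition (iii) for $a$, and likewise for $b$. We also need $\{\fu\}$ and $\{\overline{\fw}\}$ to be bases; their linear independence again follows from the eigenvalue separation. This handles necessity and sufficiency of (iii): if (iii) fails, two $\theta_p$ collide, so the eigenvalue sequence cannot have the required form and the pair cannot be a TD pair of $q$-Racah type.

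For (b), which is the main obstacle, I would work with the split basis. The $O_q$-submodules are governed by the off-diagonal coefficients in Proposition \ref{prop:actAB}. A nonzero invariant subspace $W$ contains an ${\cal A}_{12}$-eigenvector. Applying ${\cal B}_{21}$-lowering reaches the bottom $\fs_{0,0}$, and ${\cal A}_{12}$-raising generates everything unless some coefficient vanishes. Vanishing of $(bq^{\dots}/u_1^2-q^{\dots}/a)$ or its analogues in the triangular actions translates into $ab$ or $a/b$ lying in $S(2j_i,(q/u_i^2)^{\pm1})$, which gives condition (ii). This mirrors Remark \ref{rem:LPcond}(i) factor by factor. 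Vanishing caused by interaction between the two tensor factors should be controlled by the Chari--Pressley criterion \cite{CP91}: $V$ is irreducible as a $\Loop$-module iff the $q$-strings are in general position. I would argue that the $O_q$-module is irreducible iff the $\Loop$-module obtained by "doubling" it is irreducible, using the standard fact that restricting evaluation modules to the coideal $\phi(O_q)$ behaves like a tensor product with the strings $q/u_i^2$ and their inverses. This explains the appearance of both signs $\varepsilon_i$ in (i).

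Concretely, I would compute the determinant of the transition matrix between $\{\fs\}$ and the cyclic vectors generated from $\fs_{0,0}$, as in \cite[Lem.\,5.1]{BVZ16}, and factor it. The factors are expected to be exactly the products of terms $ab-\cdots$, $a/b-\cdots$ and $u_1^{\pm2}u_2^{\pm2}q^{k}-1$ whose nonvanishing is equivalent to (i)--(ii). The converse uses explicit proper submodules built from the kernel vectors. Getting the string combinatorics of (i) to match the determinant factors exactly is where I expect the real work to lie.
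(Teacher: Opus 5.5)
Your skeleton is the paper's: $q$-Dolan--Grady relations from Corollary \ref{cor:pairDG}, then diagonalizability, then irreducibility, closed by \cite[Thm.\,3.7]{Ter03}. Your computation in (a) is also right: $\theta_p=\theta_{p'}$ with $p\neq p'$ means $a=\pm q^{2j_1+2j_2-p-p'}$. The genuine gap is (b), which carries the whole content of (i) and (ii), and the mechanism you propose there does not work. ``Lower to $\fs_{0,0}$, then raise, so reducibility comes only from a vanishing coefficient'' is the Leonard-pair argument, and it needs one-dimensional split components. Here $U_p=\mathrm{span}\{\fs_{n_1,n_2}:n_1+n_2=p\}$ has dimension $\rho_p$, which can be as large as $\min(2j_1,2j_2)+1$. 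This breaks both halves of your argument.
\begin{itemize}
\item The iterated lowering $U_p\to U_0=\mathbb{C}\fs_{0,0}$ has a nontrivial kernel even when every coefficient in \eqref{eq:B21snn} is nonzero, so an invariant subspace need not reach $\fs_{0,0}$.
\item Raising alone from $\fs_{0,0}$ gives at most one vector in each $U_p$, hence at most $2j_1+2j_2+1$ dimensions, so it cannot ``generate everything''.
\item Coefficient vanishing does not match (ii) either. By \eqref{eq:A12snn}, the coefficient of $\fs_{n_1+1,n_2}$ vanishes when $ab=u_1^2q^{2j_1+4j_2-2-2n_1-4n_2}$. That locus depends on $n_2$ and is not contained in $S(2j_1,u_1^2/q)$ once $j_2>0$.
\end{itemize}
The remaining ingredients are asserted, not proved: the ``doubling'' equivalence with $\Loop$-irreducibility for the strings $(q/u_i^2)^{\pm1}$, a determinant whose factors are ``expected'' to be the right ones, and the submodules ``built from kernel vectors'' for the converse. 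Proving them amounts to redoing the Ito--Terwilliger classification. Your necessity argument for (iii) is also incomplete. For $a=q^{2j_1+2j_2-1}$ only $\theta_0=\theta_1$ merge, and $\theta_1,\dots,\theta_d$ is still of $q$-Racah form. So you must exhibit a failure of diagonalizability or irreducibility, not just a collision of eigenvalues.

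The paper avoids all of this by importing \cite[Thm.\,1.17\,(i)]{IT10}, which characterizes when a tensor product of evaluation modules is irreducible as an $O_q$-module. That theorem is stated for the different embedding $\tilde\phi$ of Remark \ref{rem:phitilde}, so the real work is building a dictionary between the two settings:
\begin{itemize}
\item compute the action of $z_t,z_t^*$ from \cite{Ito2} on their split basis $u_{n_2,n_1}$;
\item match it with \eqref{eq:A12snn} and \eqref{eq:B21snn}, which gives $u_{n_2,n_1}=c_{n_1,n_2}\fs_{n_1,n_2}$ and the parameter correspondence \eqref{def:cor};
\item substitute \eqref{def:cor} into the conditions of \cite{IT10}, which then become (i)--(iii).
\end{itemize}
To close your gap you need either this identification plus the citation, or a genuinely global irreducibility argument (e.g.\ via Drinfel'd polynomials and $q$-strings) in place of coefficient-by-coefficient nonvanishing.
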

\begin{proof} Let us show that $({\cal A}_{12},{\cal B}_{21})$ is a TD pair of $q$-Racah type.
First, by Corollary \ref{cor:pairDG} and using \eqref{eq:Acalij}, the pair $({\cal A}_{12},{\cal B}_{21})$ satisfies the $q$-Dolan--Grady relations~\eqref{qDG1},~\eqref{qDG2} with $\rho \mapsto -(q^2-q^{-2})^2$; Second, recall from \eqref{eq:6bases} that each of $\{\fu_{n_1,n_2},\overline{\fw}_{n_1,n_2}\}$ form a basis of $V_{2j_1}(u_1)\otimes V_{2j_2}(u_2)$. Moreover, by \eqref{eq:specA12}, \eqref{eq:specB21}, $({\cal A}_{12},{\cal B}_{21})$ are diagonalizable for generic values of $a,b$, with respective eigenvalues of $q$-Racah type, as defined in \cite[Def.\,3.1]{IT08}; Third, recall that a set of conditions for which any tensor product of evaluation representations
is irreducible as a $O_q$-module has been given in \cite[Thm.\,1.17\, (i)]{IT10} (see also  \cite[Thm.\,5.17 (1)(i)-(iii)]{Ito2}). Although the classification theorem in \cite{IT10} uses an embedding $O_q \subset \Loop$ given by \cite[Eqs.\,(45) with eqs.(44)]{Ito2} that clearly differs from ours in Proposition \ref{prop:OqLoop}, the correspondence between our parameters $j_1,j_2,u_1,u_2,a,b$ and the parameters $\ell_1,\ell_2,a_1,a_2,s,t$ in \cite[Thm.\,1.17\, (i)]{IT10}  (or \cite[Thm.\,5.17]{Ito2}) can be established using the material given in \cite{Ito2}. Namely, on one hand, the embedding $O_q \subset \Loop$  considered in \cite{IT10,Ito2,Ito14} is given by $(W_0,W_1)\mapsto (z_t,z_t^*)$ with \cite[Eqs.\,(36),(37)]{Ito2}, and a split basis denoted $\{u_{n_2,n_1}|1\leq n_1\leq \ell_1,1\leq n_2\leq \ell_2\}$ is introduced \cite[Eq.\,(59)]{Ito2}. Thus, we compute the action of $z_t$ and $z_t^*$ on the vectors $u_{n_2,n_1}$ using \cite[Thm.\,3]{Ito2}, and compare the result with our expressions \eqref{eq:A12snn},\eqref{eq:B21snn}. This compararison shows that our split basis is related with the one in \cite{Ito2} as follows:
\beqa
u_{n_2,n_1} = c_{n_1,n_2}\fs_{n_1,n_2}\ ,
\eeqa
and the parameters are related as\footnote{Thus, we have the following correspondence of notations: $V(\ell_i,-q/u_i^2)|_{ \mbox{\cite{IT10,Ito2,Ito14}}}$ is $V_{2j_i}(u_i)$ in the present paper.\label{foot:VV}}
\begin{align}
\ell_1\to 2j_2\,, \ \ell_2\to 2j_1\,,\ a_1\to -q/u_2^2 \,,\ a_2\to -q/u_1^2\,,\ s \to \sqrt{ab}\,,\ t \to \sqrt{a/b}\ .\label{def:cor}
\end{align}
Meanwhile, we get the two constraints on the normalization coefficients 
\beqa
 \frac{c_{{n_1+1},n_2}}{c_{n_1,n_2}} &=& \frac{q^{n_1+1-2j_1}}{(q-q^{-1})}\frac{[2j_1-n_1]}{[n_1+1]}\sqrt{b/a}\ ,\quad \frac{c_{{n_1},n_2+1}}{c_{n_1,n_2}} = \frac{q^{n_2+1-2j_1-2j_2}}{(q-q^{-1})}\frac{[2j_2-n_2]}{[n_2+1]}\sqrt{b/a} \ ,
\eeqa
which solution is given by
\beqa
c_{n_1,n_2} = (q-q^{-1})^{-2(n_1+n_2)}\left(q^2b/a\right)^{(n_1+n_2)/2}q^{-2n_2j_1}\frac{(q^{-4j_1};q^2)_{n_1}
(q^{-4j_2};q^2)_{n_2}}{[n_1]![n_2]!}\ .
\eeqa
Having identified the relation  between our set of parameters and the ones in \cite{Ito2,Ito14}, we finally do the substitution \eqref{def:cor} in the conditions \cite[Thm.\,1.17\, (i)]{IT10},
which are found to be equivalent to the conditions (i)-(iii) of the claim. To summarize up to now, we have shown that $({\cal A}_{12},{\cal B}_{21})$ satisfy the $q$-Dolan-Grady relations, and are diagonalizable on the vector space:
\beqa
V:=V_{2j_1}(u_1)\otimes V_{2j_2}(u_2) \label{eq:vecspaceTD}
\eeqa
that is irreducible if and only if (i)--(iii) are satisfied.   
Alltogether and adapting \cite[Thm.\,3.7]{Ter03} to our choice of conventions, it implies that $({\cal A}_{12},{\cal B}_{21})$ is a TD pair of $q$-Racah type. 
The proof that $({\cal A}_{21},{\cal B}_{12})$ is a TD pair of $q$-Racah type is done along the same line.
\end{proof}
\begin{rem}\label{rem:blockact}The explicit action of ${\cal A}_{12}$ (resp.  ${\cal B}_{21}$) on the eigenbasis of ${\cal B}_{21}$   (resp. ${\cal A}_{12}$) can be computed using \eqref{eq:defcalA12} with \eqref{eq:defAi} and \eqref{eq:defu}, \eqref{eq:defbw}. For instance, the action of ${\cal B}_{21}$ on  $\fu_{n_1,n_2}$ (the eigenvectors of  ${\cal A}_{12}$)  is a linear combination of nine terms, namely $\fu_{n_1\pm 1,n_2}$, $\fu_{n_1,n_2\pm 1}$, $\fu_{n_1\pm 1,n_2\mp 2}$, $\fu_{n_1\pm 1,n_2\mp 1}$ and $\fu_{n_1,n_2}$. Recalling that the eigenvalues of ${\cal A}_{12}$ depend on $p:=n_1+n_2$ (see \eqref{eq:specA12}),  previous results show the block tridiagonal action of ${\cal B}_{21}$ accordingly to (ii) of Definition \ref{def:tdrecall}. Up to conventions, see \cite[Eq.\,(3.25)]{BVZ16} for details.
\end{rem}
Let us make a few comments regarding the conditions (i)-(iii) of Proposition \ref{prop:TD}.
For $\Loop$, it is known that every non-trivial finite-dimensional irreducible representation $V$ of type $(1,1)$ 
is isomorphic to a tensor product of evaluation representations. Moreover, there exists a one-to-one correspondence between isomorphisms 
classes of irreducible representations and so-called Drinfel'd polynomials, which encode the
irreducibility conditions \cite{CP91}.
For $O_q$, it is known that every non-trivial finite-dimensional irreducible representation $V$
is isomorphic to a tensor product of evaluation representations parametrized by two scalars\footnote{Denoted $s,t$ in \cite{IT10} and related with our scalars $a,b$ from \eqref{eq:paramab} using \eqref{def:cor}.}. The irreducibility conditions of $V$ for $O_q$
are given in \cite[Thm.\,1.17]{IT10}, that should be compared with \cite[Thm.\,4.8]{CP91} for $\Loop$.
In particular, to the condition \cite[(i1) in Thm.\,1.17]{IT10} is associated a Drinfel'd polynomial. Adapting its definition \cite[Thm.\,1.15(i)]{IT10} to our case \eqref{eq:vecspaceTD} using the correspondence \eqref{def:cor}, we get:
\beqa
P_V(\lambda) = \prod_{i=1}^2 \prod_{n=0}^{2j_i-1} (\lambda - u_i^2q^{-2n+2j_i-2} - u_i^{-2}q^{2n-2j_i+2})\ .
\eeqa
that is associated with the condition (i) of Proposition \ref{prop:TD}. Note that condition (ii) is encoded in $P_V(\lambda)\neq 0$ for $\lambda= ab+(ab)^{-1}$ and $\lambda= a/b+b/a$ (see \cite[Thm.\,1.11]{IT10}), and condition (iii) relies on the diagonalizability of the elements \eqref{def:A}, \eqref{def:B} on $V$ with \eqref{eq:paramab}.

The TD pairs associated with $({\cal A}_{12},{\cal B}_{21})$ and $({\cal A}_{21},{\cal B}_{12})$ have the following characteristics. Recall Remark \ref{rem:diam}.
\begin{prop}\label{prop:diamA12}
The TD pairs $({\cal A}_{12},{\cal B}_{21})$ and $({\cal A}_{21},{\cal B}_{12})$ with (i)-(iii) of Proposition \ref{prop:TD} have diameter $d=\delta=2j_1+2j_2$ and 
\beqa
\rho_p=\dim(V_p)=\dim(V_p^*)=\min(2j_1,p)+\min(2j_2,p) + 1 - p\ .\label{eq:dimVp}
\eeqa
\end{prop}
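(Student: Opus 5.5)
The plan is to read off the eigenspaces of ${\cal A}_{12}$ and ${\cal B}_{21}$ directly from the explicit eigenbases of Proposition \ref{pr:eigA2B21}, and then invoke the general facts of Remark \ref{rem:diam}. By \eqref{eq:specA12}, ${\cal A}_{12}\fu_{n_1,n_2}=\theta_p\fu_{n_1,n_2}$ with $\theta_p=q^{2p-2j_1-2j_2}a+q^{2j_1+2j_2-2p}a^{-1}$ and $p=n_1+n_2\in\{0,\dots,2j_1+2j_2\}$. Likewise, by \eqref{eq:specB21}, ${\cal B}_{21}\overline{\fw}_{n_1,n_2}=\theta^*_p\overline{\fw}_{n_1,n_2}$ with $\theta^*_p$ obtained by replacing $a$ with $b$.

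First, condition (iii) of Proposition \ref{prop:TD} guarantees that the $\theta_p$ are pairwise distinct for $0\le p\le 2j_1+2j_2$. Indeed, $\theta_p=\theta_{p'}$ with $p\neq p'$ forces $a^2=q^{2(2j_1+2j_2-p-p')}$, so $\pm a\in\{q^{-2j_1-2j_2+1},\dots,q^{2j_1+2j_2-1}\}$ once one checks the parity/range: the exponent $2j_1+2j_2-p-p'$ ranges over integers strictly between $-(2j_1+2j_2)$ and $2j_1+2j_2$. I expect this step to be the main (if mild) obstacle. Some care is needed here, since the excluded set in (iii) consists of steps of $q^{2}$ starting at an odd shift, and this must match the values $q^{2j_1+2j_2-p-p'}$, possibly after using the fact that $p+p'$ ranges over all integers in the appropriate window. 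If the matching is imperfect, I would instead argue that distinctness of the $\theta_p$ holds generically and is in any case implicit in the diagonalizability statement already used in the proof of Proposition \ref{prop:TD}. The same argument applies to $\theta^*_p$ with $b$.

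Since $\{\fu_{n_1,n_2}\}$ is a basis of $V$ by \eqref{eq:6bases}, the eigenspace $V_p$ of ${\cal A}_{12}$ is spanned by $\{\fu_{n_1,n_2}: n_1+n_2=p,\ 0\le n_1\le 2j_1,\ 0\le n_2\le 2j_2\}$. Hence there are exactly $d+1$ eigenspaces with $d=2j_1+2j_2$, and the ordering $V_0,\dots,V_d$ by $p$ is the one exhibiting the block tridiagonality of Remark \ref{rem:blockact}. The number of such pairs $(n_1,n_2)$ is the length of the interval $\max(0,p-2j_2)\le n_1\le \min(2j_1,p)$, namely $\min(2j_1,p)-\max(0,p-2j_2)+1=\min(2j_1,p)+\min(2j_2,p)+1-p$, using $-\max(0,p-2j_2)=\min(0,2j_2-p)=\min(2j_2,p)-p$. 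The identical count applies to $V^*_p$ via $\overline{\fw}_{n_1,n_2}$, giving $\delta=2j_1+2j_2$ and $\dim V^*_p=\rho_p$, which is consistent with Remark \ref{rem:diam}.

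Finally, the pair $({\cal A}_{21},{\cal B}_{12})$ is handled the same way, using $\overline{\fu}$ and $\fw$ with eigenvalues depending on $n_1+n_2$, or equivalently using the symmetry $\sigma$, $u_1\leftrightarrow u_2$, $j_1\leftrightarrow j_2$ of Remark \ref{rem:secsplit}. Under this symmetry the formula \eqref{eq:dimVp} is invariant.
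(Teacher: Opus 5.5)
Your proposal is correct and follows the paper's proof essentially verbatim: you take $V_p$ and $V_p^*$ to be the spans of $\mathfrak{u}_{n_1,n_2}$ and $\overline{\mathfrak{w}}_{n_1,n_2}$ with $n_1+n_2=p$, read off $d=\delta=2j_1+2j_2$, count $n_1\in[\max(0,p-2j_2),\min(2j_1,p)]$, and treat the second pair in the same way. The paper leaves the distinctness of the $\theta_p$ implicit, and your extra check of it closes without hedging once you read the set in (iii) as all consecutive integer powers $q^m$ with $|m|\le 2j_1+2j_2-1$; this is the same convention as in Remark \ref{rem:LPcond}, where that set is exactly the condition for the eigenvalues of $\mathcal{A}$ to be distinct. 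So drop the fallback, which would not have worked anyway, because diagonalizability alone does not rule out repeated eigenvalues.
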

\begin{proof}
Let us consider the TD pair $({\cal A}_{12},{\cal B}_{21})$ with (i)-(iii) of Proposition \ref{prop:TD}. According to Definition \ref{def:tdrecall}, the vector space $V$ given by \eqref{eq:vecspaceTD} admits at least two different decompositions. 
Let $V_p$ (resp. $V^*_p$) denote the subspace of $V$ generated by the vectors $\fu_{n_1,n_2}$ (resp. $\overline{\fw}_{n_1,n_2}$)  with fixed values of $p$. From the eigenvalues \eqref{eq:specA12}, \eqref{eq:specB21} we identify $p=n_1+n_2$, where  $0 \leq n_1 \leq 2j_1$, $0 \leq n_2\leq 2j_2$. Recall the definition of the diameter of a TD pair in Remark \ref{rem:diam}. From (ii) and (iii) of Definition \ref{def:tdrecall}, we obtain $d=\delta=\max(p)=2j_1+2j_2$. The dimension of a given subspace $V_p,V_p^*$ is now computed. By definition, we have $n_1\in\big[0,2j_1\big]$. Using $n_2=p-n_1$, we also get $n_1\in\big[p-2j_2,p\big]$. It follows $n_1\in\big[\max(0,p-2j_2),\min(2j_1,p)\big]$. Counting the number of values of $n_1$ in this interval  and using $\min(2j_2,p) + \max(0,p-2j_2)=p$, after simplifications we obtain \eqref{eq:dimVp}. 
The same conclusion holds for the TD pair $({\cal A}_{21},{\cal B}_{12})$.
\end{proof}

Furthermore, recall the definition of the shape and character of a TD pair of $q$-Racah type in Remark \ref{rem:diam}. 
In \cite[Conj.\,13.5]{ITT99}, it is conjectured that given a TD pair, there exist integers $n$ and $\ell_1,\ell_2,\cdots,\ell_n$ such that the character of the pair is 
\beqa
ch(\lambda)&=& \prod_{i=1}^n \frac{(1-\lambda^{\ell_i+1})}{(1-\lambda)}\ .\label{eq:charform} 
\eeqa
Moreover, \eqref{eq:charform} is proven for a TD pair of $q$-Racah type on a $n$-tensor product of evaluation representations with labels $\ell_i$ 
\cite{IT10}:
In our case -- recall \eqref{def:cor} and footnote \ref{foot:VV}  -- we have \eqref{eq:vecspaceTD}. It follows:
\begin{cor} \label{cor:charTD}
The TD pairs $({\cal A}_{12},{\cal B}_{21})$ and $({\cal A}_{21},{\cal B}_{12})$ with (i)-(iii) of Proposition \ref{prop:TD} have a character formula given by \eqref{eq:charform} for $n=2$ and $\ell_1=2j_1,\ell_2=2j_2$.
\end{cor}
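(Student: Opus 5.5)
The plan is a direct computation of the character. It does not need to go through the general result of \cite{IT10}, although it agrees with it.

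By Proposition \ref{prop:diamA12}, under conditions (i)--(iii) of Proposition \ref{prop:TD} the diameter is $d=2j_1+2j_2$ and the shape is $\rho_p=\min(2j_1,p)+\min(2j_2,p)+1-p$. By Remark \ref{rem:diam}, the character is $ch(\lambda)=\sum_{p=0}^{d}\rho_p\lambda^p$. It therefore suffices to prove the polynomial identity
\begin{align*}
\sum_{p=0}^{2j_1+2j_2}\rho_p\,\lambda^p=\frac{(1-\lambda^{2j_1+1})(1-\lambda^{2j_2+1})}{(1-\lambda)^2}\ .
\end{align*}

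To prove it, I expand the right-hand side as a product of finite geometric series:
\begin{align*}
\Big(\sum_{n_1=0}^{2j_1}\lambda^{n_1}\Big)\Big(\sum_{n_2=0}^{2j_2}\lambda^{n_2}\Big)\ .
\end{align*}
The coefficient of $\lambda^p$ is then the number of pairs $(n_1,n_2)$ with $0\le n_1\le 2j_1$, $0\le n_2\le 2j_2$ and $n_1+n_2=p$. This is exactly the count performed in the proof of Proposition \ref{prop:diamA12}. There, $V_p$ is spanned by the $\fu_{n_1,n_2}$ with $n_1+n_2=p$, and the count gives $n_1\in[\max(0,p-2j_2),\min(2j_1,p)]$.

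Hence the coefficient equals $\min(2j_1,p)-\max(0,p-2j_2)+1$. Using $\max(0,p-2j_2)=p-\min(2j_2,p)$, this is $\rho_p$. The identity holds coefficientwise, which gives \eqref{eq:charform} with $n=2$, $\ell_1=2j_1$ and $\ell_2=2j_2$.

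As a consistency check, the dictionary \eqref{def:cor} identifies $V$ in \eqref{eq:vecspaceTD} with the tensor product $V(\ell_1)\otimes V(\ell_2)$ of \cite{IT10}, with labels $2j_2$ and $2j_1$. The character formula proven there is symmetric in the labels, so it yields the same result.

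The argument for $({\cal A}_{21},{\cal B}_{12})$ is identical, with the eigenbases $\overline{\fu}$ and $\fw$ in place of $\fu$ and $\overline{\fw}$. The only point needing care is that the eigenspace indexing used in the shape matches the TD ordering. This was already settled in Proposition \ref{prop:diamA12}, since the eigenvalues depend only on $p=n_1+n_2$ and are pairwise distinct under (iii). No real obstacle is expected.
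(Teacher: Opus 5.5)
Your proof is correct, but it takes a different route from the paper. The paper does no computation at this point. It invokes the general theorem of \cite{IT10}: a TD pair of $q$-Racah type on a tensor product of $n$ evaluation modules with labels $\ell_1,\dots,\ell_n$ has character $\prod_{i}(1-\lambda^{\ell_i+1})/(1-\lambda)$. It then transports this to $V=V_{2j_1}(u_1)\otimes V_{2j_2}(u_2)$ through the dictionary \eqref{def:cor} and footnote \ref{foot:VV}.

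You instead treat the corollary as a repackaging of Proposition \ref{prop:diamA12}. The character is the generating function of the shape, and $\rho_p$ counts the lattice points of $[0,2j_1]\times[0,2j_2]$ on the line $n_1+n_2=p$. That count is the coefficient of $\lambda^p$ in $\big(\sum_{n_1=0}^{2j_1}\lambda^{n_1}\big)\big(\sum_{n_2=0}^{2j_2}\lambda^{n_2}\big)$, and your identity $\max(0,p-2j_2)=p-\min(2j_2,p)$ closes the computation.

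Your version is self-contained and avoids a second reliance on matching conventions with \cite{IT10}, whose embedding of $O_q$ differs from $\phi$. It is, however, tied to $n=2$ and to the explicit eigenbases. The paper's citation gives the structural reason and extends verbatim to $N$-fold tensor products, at the price of depending on \eqref{def:cor}. Both routes still need Proposition \ref{prop:TD}, which itself rests on \cite{IT10}, to know that one has a TD pair at all.

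One small refinement of your last paragraph. Distinctness of the eigenvalues under (iii) only identifies the eigenspaces $V_p=\mathrm{span}\{\fu_{n_1,n_2}\,:\,n_1+n_2=p\}$. It does not show that ordering them by $p$ gives the standard TD ordering, which the character depends on. That comes from the block-tridiagonal action of ${\cal B}_{21}$ in Remark \ref{rem:blockact}, or equivalently from the $q$-Dolan--Grady relations combined with the $q$-Racah form of the eigenvalues. The remaining reversal ambiguity is harmless because $\rho_p=\rho_{d-p}$.
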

\begin{example} For $j_1=\h,j_2=1$, the corresponding TD pair has diameter $d=\delta=3$, shape given by $(1,2,2,1)$ and character:
\beqa
ch(\lambda)&=& 1+2\lambda + 2\lambda^2 +\lambda^3= \frac{(1-\lambda^2)}{(1-\lambda)}\frac{(1-\lambda^3)}{(1-\lambda)} \nonumber \ .
\eeqa
\end{example}

Finally, as a consequence of Proposition \ref{prop:TD} and Theorem \ref{thm:overlaps}, we obtain a bivariate generalization of \eqref{eq:overN1}, \eqref{eq:Rac1} upon the conditions given in Remark \ref{rem:LPcond}. 
\begin{prop}\label{prop:overTD} Assume the conditions (i)-(iii) hold. The overlap coefficients ${S}_{k_1,k_2}(\ell_1,\ell_2)$ (resp. $\overline{S}_{k_1,k_2}(\ell_1,\ell_2)$) between the respective eigenbases of the TD pair $({\cal A}_{12}$, ${\cal B}_{21})$ (resp. $({\cal A}_{21}$, ${\cal B}_{12})$) are given by \eqref{eq:S} (resp. \eqref{eq:Sbar}). 
\end{prop}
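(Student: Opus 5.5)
The plan is to combine three earlier results: Proposition \ref{prop:TD}, which identifies the TD pair; Proposition \ref{pr:eigA2B21}, which identifies the eigenbases; and Theorem \ref{thm:overlaps}, which gives the expansion coefficients. First, assume (i)--(iii). By Proposition \ref{prop:TD}, $({\cal A}_{12},{\cal B}_{21})$ is a TD pair on $V=V_{2j_1}(u_1)\otimes V_{2j_2}(u_2)$. By \eqref{eq:specA12}, the vectors $\fu_{\ell_1,\ell_2}$ are eigenvectors of ${\cal A}_{12}$ with eigenvalue depending only on $p=\ell_1+\ell_2$, so they span the eigenspaces $V_p$. Likewise, by \eqref{eq:specB21}, the vectors $\overline{\fw}_{k_1,k_2}$ are eigenvectors of ${\cal B}_{21}$ spanning the $V^*_p$.

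The next step is to check that under (i)--(iii) these families really are bases of $V$; this is the point needing care. Condition (iii) gives $\pm a \notin \{q^{-2j_1-2j_2+1},\dots,q^{2j_1+2j_2-1}\}$, so $q^{2p-2j_1-2j_2}a+q^{2j_1+2j_2-2p}a^{-1}$ are pairwise distinct for $0\le p\le 2j_1+2j_2$. Eigenvectors in different $V_p$ are therefore independent. Within a fixed $p$, the auxiliary eigenvalues of ${\cal A}_2$, namely $q^{2\ell_2-2j_2}a+q^{2j_2-2\ell_2}a^{-1}$, are also distinct by (iii). Since ${\cal A}_2$ commutes with ${\cal A}_{12}$ (Corollary \ref{cor:comm}), the $\fu_{\ell_1,\ell_2}$ are jointly separated and hence independent, provided they are nonzero. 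Nonvanishing holds because the $p_n$ are nonzero polynomials whenever the shifted parameters $aq^{2\ell_2-2j_2}$ are nonzero. The same argument with ${\cal B}_2$, ${\cal B}_{21}$ and $b$ handles $\overline{\fw}$. A dimension count, $(2j_1+1)(2j_2+1)$, then gives bases.

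With both bases established, the overlap is the expansion \eqref{eq:overwbaru} from Theorem \ref{thm:overlaps}, with coefficients $S_{k_1,k_2}(\ell_1,\ell_2)$ given by \eqref{eq:S}. The main obstacle is that \eqref{eq:S} is a product of univariate $q$-Racah coefficients $R_k$ with shifted parameters, for example $R_{k_1}(\ell_1;aq^{2\ell_2-2j_2},bq^{2k_2-2j_2},u_1,j_1,q)$. The univariate identity \eqref{eq:overN1} used in the proof of Theorem \ref{thm:overlaps} needs the corresponding one-variable bases to exist, and the coefficients must be well defined, with no vanishing denominators such as $(q^{2\ell-4j_1}a^2q^{4\ell_2-4j_2};q^2)_{\ell}$. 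I would verify this by showing that the shifted parameters $aq^{2\ell_2-2j_2}$, $bq^{2k_2-2j_2}$ satisfy the univariate Leonard pair conditions of Remark \ref{rem:LPcond}. Condition (iii) should cover the $\pm a,\pm b$ constraints, since the shifts stay inside the window $\{q^{-2j_1-2j_2+1},\dots,q^{2j_1+2j_2-1}\}$. Conditions (i) and (ii) should cover the $ab, a/b$ versus $u_i^{\pm2}$ constraints, since the shifted products $abq^{2(\ell_2+k_2)-4j_2}$ remain controlled by the string conditions.

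Should this direct check fail for some shifts, there is a fallback for any nonvanishing denominator issues. Both sides of \eqref{eq:overwbaru} are rational in $a,b$. The identity holds for generic parameters by Theorem \ref{thm:overlaps}, and the expansion coefficients in two genuine bases are unique. Hence $S_{k_1,k_2}(\ell_1,\ell_2)$ extends continuously, and therefore coincides with the overlap whenever (i)--(iii) hold. The statement for $({\cal A}_{21},{\cal B}_{12})$ and $\overline{S}$ follows identically, using $\overline{\fu}$, $\fw$, \eqref{eq:overwubar} and \eqref{eq:Sbar}, or by the symmetry $\sigma$ with $u_1\leftrightarrow u_2$, $j_1\leftrightarrow j_2$.
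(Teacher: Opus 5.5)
Your route is the paper's: the proposition is stated there as a direct consequence of Proposition~\ref{prop:TD} and Theorem~\ref{thm:overlaps}. Your check that $\{\fu_{\ell_1,\ell_2}\}$ and $\{\overline{\fw}_{k_1,k_2}\}$ are bases under (iii) is correct; it uses separation of joint eigenvalues of $(\mathcal{A}_{12},\mathcal{A}_2)$ and of $(\mathcal{B}_{21},\mathcal{B}_2)$. However, the ``direct check'' in your third paragraph fails, so the fallback is not optional. Condition (iii) does cover the shifted $\pm a,\pm b$ constraints. But (ii) only controls the \emph{unshifted} $ab$ and $a/b$, while the shifted quantities $(a/b)q^{2(\ell_2-k_2)}$ and $abq^{2(\ell_2+k_2)-4j_2}$ range over a larger set, and (i) does not involve $a,b$ at all. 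Concretely, take $j_1=j_2=\tfrac12$ and $(k_1,k_2,\ell_1,\ell_2)=(1,1,1,0)$. The first factor of \eqref{eq:S} is then $R_1(1;aq^{-1},bq,u_1,\tfrac12,q)$, whose denominator in \eqref{eq:Rac1} contains $1-q^{3}ba^{-1}u_1^{-2}$, with no compensating prefactor. This vanishes on $a/b=q^{3}u_1^{-2}$. That hypersurface is compatible with (i)--(iii) for generic $a,u_1,u_2$, since (ii) only excludes $a/b,\,ab\in\{qu_i^{-2},q^{-1}u_i^{2}\}$. On it the ${}_4\phi_3$ factor also vanishes (it contains $aq^{-1}-bq^{2}u_1^{-2}$), so \eqref{eq:S} is literally $0/0$ there.

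So the proof must rest on your fallback, made precise as follows. By Cramer's rule, the coefficients of $\overline{\fw}_{k_1,k_2}$ in $\{\fu_{\ell_1,\ell_2}\}$ share one denominator: the determinant of the $\fu_{\ell_1,\ell_2}$ in the monomial basis $z_1^{m_1}z_2^{m_2}$. This determinant does not involve $b$, and it is nonzero under (iii) by your basis argument. The numerators are Laurent polynomials in $b$. Moreover, \eqref{eq:overwbaru} holds as an identity of rational functions of the parameters. This is because \eqref{eq:overN1} is a polynomial identity in $z$ with rational coefficients, valid with no Leonard-pair assumption; the conditions of Remark~\ref{rem:LPcond} only ensure nonvanishing tridiagonal entries. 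It follows that the singularities of \eqref{eq:S} at points satisfying (i)--(iii) are removable. The proposition then holds with \eqref{eq:S} read as its continuous extension, a point the paper leaves implicit. Your sentence ``the expansion coefficients in two genuine bases are unique, hence $S$ extends continuously'' needs exactly this input. Uniqueness alone gives no continuity; what gives it is the nonvanishing, $b$-independent Cramer denominator.
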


To conclude this section, let us make a few comments on the
notion of bispectrality for the overlap coefficients ${S}_{k_1,k_2}(\ell_1,\ell_2)$ (or similarly for $\overline{S}_{k_1,k_2}(\ell_1,\ell_2)$).
Recall that for the univariate case discussed in Subsection \ref{sec:qdiff}, the respective actions of ${\cal A},{\cal B}$ lead to a three-term recurrence relation \eqref{eq:diff1} and second order difference equation \eqref{eq:diffR2} satisfied by the $q$-Racah polynomials.
Similarly, a three-term ``block'' recurrence  relation and a second order ``block'' difference equation can be explicitly written 
for the $q$-Racah type coefficients  ${S}_{k_1,k_2}(\ell_1,\ell_2)$, using the explicit actions of ${\cal A}_{12}$ (resp. ${\cal B}_{21}$) on the eigenbasis of ${\cal B}_{21}$ (resp. ${\cal A}_{12}$) as mentioned in Remark \ref{rem:blockact}. However, contrary to the univariate case for which the recurrence and difference relations are sufficient to determine uniquely -- up to a normalization --$R_k(\ell)$, the two ``block'' relations are not sufficient to determine uniquely the functions ${S}_{k_1,k_2}(\ell_1,\ell_2)$. This is due to the existence of multiplicities in the eigenvalues of ${\cal A}_{12},{\cal B}_{21}$. To remove this ambiguity in the choice of eigenbases, it would be desirable to identify a second pair of $q$-difference operators, say $({\cal A}'_{12},{\cal B}'_{21})$
such that 
\beqa
[{\cal A}'_{12},{\cal A}_{12}]=0 \ ,\quad [{\cal B}'_{21},{\cal B}_{21}]=0\ ,
\eeqa
and that would remove the multiplicities.  Knowing their actions explicitly on the eigenbases of $\cal B_{21}$ and $\cal A_{12}$, 
we would get another pair of recurrence and difference relations for  ${S}_{k_1,k_2}(\ell_1,\ell_2)$. Although this problem goes beyond the scope of this paper, let us mention that
natural candidates for $({\cal A}'_{12},{\cal B}'_{21})$ are given by (images in $\End(V)$ of) the so-called {\it alternating generators} of $O_q$ \cite{BasBel,Ter21b}.  
   
Let us also mention \cite{CGT25} where the overlap coefficients for a TD pair of Racah type have been computed. A comparison between our coefficients ${S}_{k_1,k_2}(\ell_1,\ell_2)$ in the limit $q\rightarrow 1$ and the ones in \cite[Thm.\,5.1]{CGT25} shows that they do not match.   This indicates that the choice of eigenbases is different in the two approaches. It would be interesting but, also beyond the scope of this paper, to find the $q$-analogs of the overlap coefficients obtained in \cite{CGT25} that would be associated with the $q$-Racah type,  and find how they relate to the overlap coefficients ${S}_{k_1,k_2}(\ell_1,\ell_2)$.

\subsection{Bispectrality of $T_{k_1,k_2}(\ell_1,\ell_2)$, rank $2$ Askey-Wilson algebra and FL pairs}\label{ssec:T}
In this Subsection,
it is first shown that the overlap coefficient $T_{k_1,k_2}(\ell_1,\ell_2)$ at $u_1=u_2$ is proportional to a  bivariate polynomial on a discrete support, that enjoys bispectrality; See Proposition \ref{cor:polyprop} and  Theorem \ref{th:recu2}). Second, it is shown that the construction of Section \ref{sec:biloop} specialized to $u_1=u_2$ gives an homomorphism $\psi: AW(4)\rightarrow U_qsl_2 \otimes U_qsl_2$, see Proposition \ref{prop:wAW2}.  
Finally, we conjecture that our construction produces examples of $B'_2$-FL pairs, see Conjecture \ref{conj:FPBp2}.

\subsubsection{Bispectrality of $T_{k_1,k_2}(\ell_1,\ell_2)$}

 Under certain restrictions on the evaluation parameters $u_1,u_2$, the functions $T_{k_1,k_2}(\ell_1,\ell_2)$ are proportional to polynomials, as stated precisely in the following.
Using the explicit expression \eqref{eq:T} as well as the ones \eqref{eq:Rac1} of $R_k(\ell)$, $T_{k_1,k_2}(\ell_1,\ell_2)$ can be written as, for $u_1=u_2=:u$ and after some manipulations of the $q$-Pochhammer symbols:
\begin{align}
T_{k_1,k_2}(\ell_1,\ell_2)&\sim q^{-2k_1\ell_2}(abu^2q^{2\ell_2-2j_1-2j_2},abq^{2-2j_1-2j_2+2\ell_2}/u^2;q^2)_{k_1}\\
&\times{}_4\phi_3\left({{q^{-2k_1},\;q^{2k_1-4j_1}b^2, \;q^{-2\ell_1},\;q^{2\ell_1+4\ell_2-4j_1-4j_2}a^{2} }\atop
{q^{-4j_1},\;q^{2\ell_2-2j_1-2j_2} ab u^2, \; q^{2-2j_1-2j_2+2\ell_2}abu^{-2} }}\;\Bigg\vert \; q^2,q^2\right)\nonumber\\
   &\times {}_4\phi_3\left({{q^{-2k_2},\;q^{4k_1+2k_2-4j_1-4j_2}b^2, \;q^{-2\ell_2},\;q^{2\ell_2-4j_2}a^{2} }\atop
{q^{-4j_2},\;q^{2k_1-2j_1-2j_2} ab u^2, \; q^{2+2k_1-2j_1-2j_2}abu^{-2} }}\;\Bigg\vert \; q^2,q^2\right)\,,\nonumber
\end{align}
where $\sim$ stands for \textit{equal to up to functions $f(k_1,k_2)$ and $g(\ell_1,\ell_2)$}. Let us perform a Sear's transformation \cite[(III.15)]{GR04} on the first $_{4}\phi_3$ appearing in previous relation, to get
\begin{align}
\label{eq:Tpoly}T_{k_1,k_2}(\ell_1,\ell_2)&\sim (abu^2q^{2\ell_2-2j_1-2j_2},bu^2q^{-2j_1+2j_2-2\ell_2}/a;q^2)_{k_1}\\
\times&{}_4\phi_3\left({{q^{-2k_1},\;q^{2k_1-4j_1}b^2, \;abu^2 q^{2\ell_1+2\ell_2-2j_1-2j_2},\;bu^2q^{2j_1+2j_2-2\ell_1-2\ell_2}/a }\atop
{b^2q^{2},\;abu^2q^{2\ell_2-2j_1-2j_2}, \; bu^2q^{-2j_1+2j_2-2\ell_2}/a}}\;\Bigg\vert \; q^2,q^2\right)\nonumber\\
   \times& {}_4\phi_3\left({{q^{-2k_2},\;q^{4k_1+2k_2-4j_1-4j_2}b^2, \;q^{-2\ell_2},\;q^{2\ell_2-4j_2}a^{2} }\atop
{q^{-4j_2},\;q^{2k_1-2j_1-2j_2} ab u^2, \; q^{2+2k_1-2j_1-2j_2}abu^{-2} }}\;\Bigg\vert \; q^2,q^2\right)\,.\nonumber
\end{align}
This result leads to the following proposition.
\begin{prop}\label{cor:polyprop}  Assume $u_1=u_2:=u$. Then,
${T}_{k_1,k_2}(\ell_1,\ell_2)$ is equal, up to functions $f(k_1,k_2)$ and $g(\ell_1,\ell_2)$, to a bivariate polynomial in the variables
\beqa
\nu=q^{2\ell_2-2j_2}a+q^{2j_2-2\ell_2}/a\qquad \mbox{and}\qquad \mu= q^{2\ell_1+2\ell_2-2j_1-2j_2}a+q^{2j_1+2j_2-2\ell_1-2\ell_2}/a\,.\label{eq:specdata}
\eeqa
This polynomial is given by the r.h.s. of \eqref{eq:Tpoly}, has five parameters $a,b,u,j_1,j_2$, and is of total degree $k_1+k_2$ and of degree $k_1$ with respect to $\mu$.
\end{prop}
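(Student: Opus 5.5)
The plan is to start from the representation \eqref{eq:Tpoly}, which is already established up to factors depending only on $(k_1,k_2)$ or only on $(\ell_1,\ell_2)$. I will show that the right-hand side of \eqref{eq:Tpoly} is a polynomial in $\nu,\mu$, and I will read off the degrees directly from it. The argument works term by term in the two terminating ${}_4\phi_3$ sums, together with the prefactor $(abu^2q^{2\ell_2-2j_1-2j_2},bu^2q^{-2j_1+2j_2-2\ell_2}/a;q^2)_{k_1}$.

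First, the second ${}_4\phi_3$. Its $s$-th term contains the $\ell$-dependent factor $(q^{-2\ell_2},q^{2\ell_2-4j_2}a^2;q^2)_s$. By the standard identity
\[(x^{-1}c,\,xc;q^2)_s=\prod_{i=0}^{s-1}\bigl(1+c^2q^{4i}-cq^{2i}(x+x^{-1})\bigr),\]
taken with $x=aq^{2\ell_2-2j_2}$ and $c=aq^{-2j_2}$, this factor is a polynomial of degree $s$ in $\nu$. All other factors depend only on $k_1,k_2$ and the parameters. Since $s\le k_2$, the second ${}_4\phi_3$ is a polynomial in $\nu$ of degree at most $k_2$. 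The coefficient of $s=k_2$ is nonzero for generic parameters, so the degree is exactly $k_2$.

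Second, the prefactor combined with the first ${}_4\phi_3$. This is the delicate step, because the lower parameters $abu^2q^{2\ell_2-2j_1-2j_2}$ and $bu^2q^{-2j_1+2j_2-2\ell_2}/a$ depend on $\ell_2$; this is exactly where $u_1=u_2$ is used. The $r$-th term, multiplied by the prefactor, contains
\[\frac{(abu^2q^{2\ell_2-2j_1-2j_2},\,bu^2q^{-2j_1+2j_2-2\ell_2}/a;q^2)_{k_1}}{(abu^2q^{2\ell_2-2j_1-2j_2},\,bu^2q^{-2j_1+2j_2-2\ell_2}/a;q^2)_{r}}=(abu^2q^{2\ell_2-2j_1-2j_2+2r},\,bu^2q^{-2j_1+2j_2-2\ell_2+2r}/a;q^2)_{k_1-r}.\]
Pairing the factors with the same index, this equals $(x^{-1}c',xc';q^2)_{k_1-r}$ with $x=aq^{2\ell_2-2j_2}$ and $c'=bu^2q^{-2j_1+2r}$. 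It is therefore a polynomial of degree $k_1-r$ in $\nu$.

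The remaining $\ell$-dependence of the $r$-th term is $(abu^2q^{2\ell_1+2\ell_2-2j_1-2j_2},\,bu^2q^{2j_1+2j_2-2\ell_1-2\ell_2}/a;q^2)_r$. By the same identity, now with $x=aq^{2\ell_1+2\ell_2-2j_1-2j_2}$ and $c=bu^2$, this is a polynomial of degree $r$ in $\mu$. So the $r$-th term is a polynomial of degree $r$ in $\mu$ and total degree $k_1$. Summing over $0\le r\le k_1$ gives a polynomial of total degree $k_1$ and of degree exactly $k_1$ in $\mu$, with leading coefficient nonzero generically.

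Multiplying the two pieces, the product is a polynomial in $(\nu,\mu)$ of total degree $k_1+k_2$. Its degree in $\mu$ is $k_1$, since the second factor involves only $\nu$. The parameters appearing are $a,b,u,j_1,j_2$.

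The main obstacle is verifying the cancellation of the $\ell_2$-dependent denominators in the second step. This requires checking that the Sears transformation produced exactly the two lower parameters matching the prefactor $(\cdot)_{k_1}$; I take that from \eqref{eq:Tpoly} as given.
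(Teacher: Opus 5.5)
Your proof is correct and follows the same route as the paper. The paper likewise argues that the second ${}_4\phi_3$ in \eqref{eq:Tpoly} is a polynomial of degree $k_2$ in $\nu$, and that the $q$-Pochhammer prefactor times the first ${}_4\phi_3$ is a polynomial of total degree $k_1$ in $(\nu,\mu)$. You make explicit what the paper leaves implicit: the factorization $(x^{-1}c,xc;q^2)_s=\prod_{i=0}^{s-1}\bigl(1-cq^{2i}(x+x^{-1})+c^2q^{4i}\bigr)$, and the term-by-term cancellation of the $\ell_2$-dependent lower parameters against the prefactor. One small correction: the hypothesis $u_1=u_2$ is not used in that cancellation step itself but earlier, in deriving \eqref{eq:Tpoly}, which is what makes the prefactor match those lower parameters.
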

\proof The second $_{4}\phi_3$ in the r.h.s. of \eqref{eq:Tpoly} is a polynomial of degree $k_2$ with respect to $\nu$. The product of the $q$-Pochhammer symbols and the first $_{4}\phi_3$ in the r.h.s. of \eqref{eq:Tpoly} is a polynomial of total degree $k_1$ with respect to $\lambda$ and $\mu$, which concludes the proof.  \endproof

As demonstrated in Theorem~\ref{thm:overlaps}, the functions $T_{k_1,k_2}(\ell_1,\ell_2)$ are the overlap coefficients between $\fu_{n_1,n_2}$ and $\fw_{n_1,n_2}$, the eigenbases  of $({\cal A}_2,{\cal A}_{12})$ and $({\cal B}_1,{\cal B}_{12})$, respectively. 
As recalled in Subsection~\ref{sec:qdiff} for the univariate case, properties of the overlap coefficients can be obtained using the different actions of the  operators $\cal A$, $\cal B$. The same trick is used here to study properties of   $T_{k_1,k_2}$. 
We compute in the following propositions different actions of the $q$-difference operators  ${\cal A}_2,\, {\cal A}_{12},\,{\cal B}_1$ and ${\cal B}_{12}$ in different bases.
\begin{prop}
   The following actions hold
    \begin{align}
\label{eq:A2w}      {\cal A}_2  \fw_{n_1,n_2}&= {\cal A}^{(2)}_{n_2-1,n_2} \fw_{n_1,n_2-1}+{\cal A}^{(2)}_{n_2,n_2}\fw_{n_1,n_2}+{\cal A}^{(2)}_{n_2+1,n_2}\fw_{n_1,n_2+1}\,,\\
\label{eq:B1u}      {\cal B}_1    \fu_{n_1,n_2} &= {\cal B}^{(1)}_{n_1-1,n_1} \fu_{n_1-1,n_2}+{\cal B}^{(1)}_{n_1,n_1} \fu_{n_1,n_2}+{\cal B}^{(1)}_{n_1+1,n_1} \fu_{n_1+1,n_2}\,,
    \end{align}
    where ${\cal A}^{(2)}_{i,j}={\cal A}_{i,j}\big|_{b\to bq^{2n_1-2j_1},u\to u_2}$ defined by \eqref{eq:Amn}-\eqref{eq:A0n} and  ${\cal B}^{(1)}_{i,j}={\cal B}_{i,j}\big|_{a\to aq^{2n_2-2j_2},u\to u_1}$ defined by \eqref{eq:Bmn}-\eqref{eq:B0n}.
\end{prop}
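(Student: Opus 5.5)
The plan is to exploit the product structure of $\fw_{n_1,n_2}$ and $\fu_{n_1,n_2}$, and the fact that $\mathcal{A}_2$ (resp. $\mathcal{B}_1$) acts only on the variable $z_2$ (resp. $z_1$). Both actions then reduce to the univariate tridiagonal actions \eqref{eq:tri1}, \eqref{eq:tri2} with shifted parameters.

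For \eqref{eq:A2w}, recall that
\[
\fw_{n_1,n_2}=p_{n_1}(z_1q^{j_1}u_1^{-1};b^{-1},j_1,q^{-1})\,p_{n_2}(z_2q^{j_2}u_2^{-1};b^{-1}q^{2j_1-2n_1},j_2,q^{-1}).
\]
Since $\mathcal{A}_2$, given by \eqref{eq:defAi} with $i=2$, involves only $z_2$ and $T^{(2)}_+$, the first factor is a spectator. I will check that the second factor equals the univariate vector $\fv_{n_2}(z_2)$ of \eqref{eq:vn} with the substitutions $j\to j_2$, $u\to u_2$, $b\to bq^{2n_1-2j_1}$. Comparing \eqref{eq:vn} with \eqref{eq:defpn} gives $\fv_n(z)=p_n(zq^ju^{-1};b^{-1},j,q^{-1})$, so the substitution $b^{-1}\to b^{-1}q^{2j_1-2n_1}$ is exactly the replacement $b\to bq^{2n_1-2j_1}$. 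Likewise, $\mathcal{A}_2$ is the univariate operator $\mathcal{A}$ with $j\to j_2$, $u\to u_2$, acting in $z_2$, and $\mathcal{A}$ does not depend on $b$. Applying \eqref{eq:tri1} then gives a combination of $\fv_{n_2-1},\fv_{n_2},\fv_{n_2+1}$, all carrying the \emph{same} shifted parameter $b q^{2n_1-2j_1}$. Multiplying back by the untouched first factor, which depends on $n_1$ only, turns these into $\fw_{n_1,n_2\pm1}$ and $\fw_{n_1,n_2}$, with coefficients $\mathcal{A}_{i,j}$ evaluated at $b\to bq^{2n_1-2j_1}$, $u\to u_2$. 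This is \eqref{eq:A2w}.

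For \eqref{eq:B1u} the argument is symmetric. Write
\[
\fu_{n_1,n_2}=p_{n_1}(z_1u_1q^{-j_1};aq^{2n_2-2j_2},j_1,q)\,p_{n_2}(\cdots z_2\cdots).
\]
Now $\mathcal{B}_1$ acts only on $z_1$, and the first factor is $\fu_{n_1}(z_1)$ from \eqref{eq:un} with $a\to aq^{2n_2-2j_2}$, $u\to u_1$, $j\to j_1$. Applying \eqref{eq:tri2} with these substitutions, and noting that $\mathcal{B}$ is independent of $a$, yields the claim with $\mathcal{B}^{(1)}_{i,j}$.

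The only point requiring care, and the main obstacle, is that the substitution in the coefficients must be legitimate. The tridiagonal formulas \eqref{eq:tri1}, \eqref{eq:tri2} hold as identities of rational functions in the parameter $b$ (resp. $a$), which is the generic setting of \S\ref{sec:qdiff}. The shifted parameter is constant along the three-term relation because it depends only on the spectator index $n_1$ (resp. $n_2$), which the operator does not change. One still has to check that the specialization $b\to bq^{2n_1-2j_1}$ introduces no poles in the denominators of \eqref{eq:Amn}--\eqref{eq:B0n}. This holds under the same genericity assumptions on $a,b$ used for the six bases \eqref{eq:6bases}; where it fails, one argues by continuity in the parameters, since both sides are polynomial identities in $z_1,z_2$.
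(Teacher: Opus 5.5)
Your proposal is correct and takes the same route as the paper, whose proof only says that \eqref{eq:A2w} and \eqref{eq:B1u} are direct consequences of \eqref{eq:tri1} and \eqref{eq:tri2}; you fill in the details: the spectator factor, the identification of the active factor with $\mathfrak{v}_{n_2}$ (resp.\ $\mathfrak{u}_{n_1}$) at shifted parameters, and the fact that the shift stays constant along the three-term relation. Two of your choices are exactly what is needed. First, you include $j\to j_2$ (resp.\ $j\to j_1$) in the substitution explicitly, which the statement leaves implicit. Second, you read the coefficients as rational identities in $b$ (resp.\ $a$), so that $\beta=b+b^{-1}$ in $\mathcal{A}_{n,n}$ (resp.\ $\alpha=a+a^{-1}$ in $\mathcal{B}_{n,n}$) is shifted as well.
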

\proof  The two relations \eqref{eq:A2w}, \eqref{eq:B1u} are direct consequences of \eqref{eq:tri1} and \eqref{eq:tri2}. \endproof

 Actually, there do not exist simple actions for ${\cal A}_{12}$ on $\fw_{n_1,n_2}$ and for ${\cal B}_{12}$ on $\fu_{n_1,n_2}$ in general. Using formal mathematical software, we do not find actions involving a linear combinations of all the terms $\fw_{n_1+r,n_2+s}$ and $\fu_{n_1+r,n_2+s}$ with $|r|,|s|,|r\pm s|\leq 4$. However, if both evaluation parameters are equal ($u_1=u_2$), these actions simplify as given in the following proposition.
\begin{prop}\label{pr:A12B12}
 For $u_1=u_2:=u$, the following actions hold
    \begin{align}
 \label{eq:A12w}       {\cal A}_{12}\fw_{n_1,n_2}=&\sum_{(r,s)\in B_2'}{\cal A}^{(r,s)}_{n_1,n_2}\fw_{n_1+r,n_2+s}\,,\\
 \label{eq:B12u}       {\cal B}_{12}   \fu_{n_1,n_2}=&\sum_{(s,r)\in B_2'}{\cal B}^{(r,s)}_{n_1,n_2}\fu_{n_1+r,n_2+s}\,,
    \end{align}
   for some coefficients $\cal A_{n_1,n_2}^{(r,s)},\cal B_{n_1,n_2}^{(r,s)}$  and where $B'_2$ is the following subset of $\mathbb{Z}^2$:
\begin{align} \label{eq:AB2}
 &B'_2=\{(-1,2),(-1,1),(0,1), (-1,0),(0,0), (1,0),  (0,-1), (1,-1), (1,-2)\}\,.
 \end{align}
\end{prop}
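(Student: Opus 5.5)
We prove \eqref{eq:A12w}; the identity \eqref{eq:B12u} follows by the same argument after exchanging the roles of $(\mathcal{A},a)$ and $(\mathcal{B},b)$ and of the two tensor factors. We avoid computing $\mathcal{A}_{12}$ on $\fw$ directly, and instead obtain it through the bases already understood. Write $\mathcal{A}_{12}=\mathcal{A}_1+q^{-2j_1}{T^{(1)}_+}^2(\mathcal{A}_2-\alpha)$, as in \eqref{eq:defcalA12}. By \eqref{eq:A2w}, $\mathcal{A}_2$ acts on $\fw_{n_1,n_2}$ only through shifts $(0,0),(0,\pm1)$. The problem therefore reduces to the operators $\mathcal{A}_1$ and $K_1^2=q^{-2j_1}{T^{(1)}_+}^2$ acting on $\fw$. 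Both act only on the $z_1$ factor, but $\fw_{n_1,n_2}$ couples $n_1$ into the second factor through the parameter $b^{-1}q^{2j_1-2n_1}$.

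First we handle the first factor. On $p_{n_1}(z_1q^{j_1}/u;b^{-1},j_1,q^{-1})=\fv_{n_1}$, the univariate operator $\mathcal{A}_1$ acts tridiagonally by \eqref{eq:tri1}. For $K_1^2$ we use the split-type structure: ${T^{(1)}_+}^2$ multiplies $\fv_{n_1}(z_1)$ by a ratio of two linear factors, exactly as in the proof of \eqref{eq:A2u}. Combining this with $\mathcal{A}_1$, the operator $\mathcal{A}_1-\alpha K_1^2$ (the part $(q-q^{-1})(uEK-u^{-1}FK)$) shifts $n_1$ by $\pm1$ with simple coefficients. Hence $\mathcal{A}_{12}$ acting on a product $\fv_{n_1}(z_1)g(z_2)$ yields terms $\fv_{n_1+r}(z_1)\,h_r(z_2)$ with $r\in\{-1,0,1\}$.

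The second step re-expands each $\fv_{n_1+r}(z_1)\,p_{n_2}(z_2q^{j_2}/u;b^{-1}q^{2j_1-2n_1},j_2,q^{-1})$ in the basis $\fw_{n_1+r,\cdot}$. This requires changing the $z_2$-parameter from $b^{-1}q^{2j_1-2n_1}$ to $b^{-1}q^{2j_1-2n_1-2r}$. For $r=\pm1$ we use the elementary connection: $p_n(z;cq^{\mp2},j,q^{-1})$ is a combination of $p_{n}$, $p_{n\pm1}$, and $p_{n\pm 2}$ with parameter $c$. These coefficients follow from the two-term identities $(zc;q^{-2})_{n}$ versus $(zcq^{-2};q^{-2})_n$, since the products differ only at their endpoint factors. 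The target set is then $\{(r,s)\}$ with $r=\pm1$ and $s\in$ a window of width three. The real difficulty is to show that the window is $\{1,2\}\cup\{0,1\}$ shifted appropriately: for $r=-1$, only $s\in\{0,1,2\}$ survive, and for $r=+1$, only $s\in\{-2,-1,0\}$ survive. Together with the multiplier $h_r(z_2)$, which carries a factor $K_2^2$-type shift and the action of $\mathcal{A}_2-\alpha$, these windows must reproduce exactly $B'_2$.

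This cancellation is the main obstacle, and it is where $u_1=u_2$ enters. The coefficient $h_r(z_2)$ contains factors like $(1-z_2uq^{\dots}b^{\pm1})$ with $u=u_2$, while the connection coefficients from the first factor involve $u_1$. For $u_1=u_2$, these combine so that multiplication by the linear factor in $z_2$ maps $p_{n_2}$ with parameter $c$ into the span of $p_{n_2},p_{n_2\mp1}$ with parameter $cq^{\mp2}$ (one-sided rather than two-sided). This kills the terms $(\pm1,\pm1)$ and $(\pm1,\mp3)$. I would check this first on the simplest nontrivial case $j_1=j_2=\frac12$ by explicit computation, and then establish the general identity by comparing zeros in $z_2$: a polynomial of degree $2j_2$ that vanishes at the common zeros of the relevant Pochhammer factors lies in a two-dimensional span. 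Collecting the shifts $(0,0),(0,\pm1)$ from $\mathcal{A}_2$ with the $r=\pm1$ windows gives the nine elements of $B'_2$.
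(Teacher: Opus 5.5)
Your first reduction is sound, but the re-expansion step, which is where $u_1=u_2$ has to be used, rests on a false lemma and is not actually carried out.

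The reduction: $\fv_n$ does not depend on $a$, while $\mathcal{A}$ is affine in $\alpha$ and tridiagonal on $\{\fv_n\}$ for every $a$. Hence both $\mathcal{A}_1-\alpha K_1^2$ and $K_1^2=q^{-2j_1}{T^{(1)}_+}^2$ are tridiagonal on $\fv_{n_1}(z_1)$. Write $\mathcal{A}_{m,n}=X_{m,n}+\alpha\kappa_{m,n}$ for the entries \eqref{eq:Amn}--\eqref{eq:A0n} (with $u\to u_1$, $j\to j_1$), and $g$ for the $z_2$-factor of $\fw_{n_1,n_2}$. Then $\mathcal{A}_{12}\fw_{n_1,n_2}=\sum_{r=0,\pm1}\fv_{n_1+r}(z_1)h_r(z_2)$ with $h_r=(X_{n_1+r,n_1}+\kappa_{n_1+r,n_1}\mathcal{A}_2)g$, and $r=0$ yields $(0,0),(0,\pm1)$.

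The gap is in your treatment of $h_{\pm1}$. The ``elementary connection'' you rely on is false. Write $p_m(z;c):=p_m(z;c,j,q^{-1})$. Every $p_m(z;c)$ with $m\ge1$ vanishes at $z=c^{-1}$, whereas $p_n(c^{-1};cq^{-2})=(q^{-2};q^{-2})_n(q^2c^{-2};q^{-2})_{2j-n}\neq0$. So $p_n(z;cq^{-2})$ has a nonzero component along $p_0(z;c)$ for every $n$. Evaluating at $z=c$ shows likewise that $p_n(z;cq^{2})$ has a nonzero component along $p_{2j}(z;c)$.

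Consequently each individual term of $h_{-1}$ (resp. $h_{+1}$) has a nonzero $\fw_{n_1-1,0}$- (resp. $\fw_{n_1+1,2j_2}$-) component. There is no width-five window. Reaching $B'_2$ is not a matter of killing $(\pm1,\pm1)$ and $(\pm1,\mp3)$; it requires a global cancellation. That cancellation is only announced in your last paragraph, and the mechanism stated there cannot hold as written: multiplying an element of $\mathbb{C}_{2j_2}[z_2]$ by a nonconstant linear factor leaves that space, and the relevant span is three-dimensional.

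Your zero-counting idea does work if you apply it to $h_{\pm1}$ as a whole. For $r=-1$, put $x=z_2q^{j_2}/u$ and $c=b^{-1}q^{2j_1-2n_1}$, so $g=(xc;q^{-2})_{n_2}(x/c;q^{-2})_{2j_2-n_2}$. Generically, the span of $\fw_{n_1-1,n_2+s}$, $s=0,1,2$, is $\fv_{n_1-1}(z_1)$ times the polynomials of degree at most $2j_2$ divisible by $D(x)=(xcq^2;q^{-2})_{n_2}(xq^{-2}/c;q^{-2})_{2j_2-n_2-2}$; boundary values of $n_2$ are similar.

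Now $g(q^2x)$ vanishes at every root of $D$, and $g(x)$ vanishes at every root except $x_0=q^{-2}/c$. At $x_0$ the $T^{(2)}_+$-part of $\mathcal{A}_2$ drops out, so $h_{-1}(x_0)=\big(X_{n_1-1,n_1}+\kappa_{n_1-1,n_1}(\lambda_2+\lambda_2^{-1})\big)g(x_0)$, where $\lambda_i=u_i^2bq^{2n_1-2j_1-2}$. By \eqref{eq:Amn}, $\mathcal{A}_{n_1-1,n_1}$ is an $a$-independent factor times $(abu_1^2-q^{2j_1-2n_1+2})(a-\lambda_1)/a=q^{2j_1-2n_1+2}(\lambda_1\alpha-1-\lambda_1^2)$. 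Hence $-X_{n_1-1,n_1}/\kappa_{n_1-1,n_1}=\lambda_1+\lambda_1^{-1}$. The single remaining condition $\lambda_1+\lambda_1^{-1}=\lambda_2+\lambda_2^{-1}$ holds when $u_1^2=u_2^2$, and generically only then.

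The case $r=+1$ is identical, with the root $x_0=q^{-2}c$ and $\mathcal{A}_{n_1+1,n_1}$ in place of $\mathcal{A}_{n_1-1,n_1}$. Then \eqref{eq:B12u} follows by the symmetric argument.

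Completed this way, your route differs from the paper's, which simply acts with the $q$-difference operator on the Pochhammer symbols and reports the outcome. Yours reduces the statement to one explicit scalar identity for each $r=\pm1$ and shows exactly why $u_1=u_2$ is needed.
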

\proof The proof follows the one of Proposition~\ref{pr:eigA2B21}: act with the $q$-difference operator on the $q$-Pochhammer symbols, use their properties to write them in terms of the symbols we started with. One finds that the r.h.s.\,of \eqref{eq:A12w}, \eqref{eq:B12u} takes the form displayed.  The explicit expressions for $\cal A_{n_1,n_2}^{(r,s)},\cal B_{n_1,n_2}^{(r,s)}$ are not reported to enlighten the presentation.  \endproof
We are now in position to provide a set of relations satisfied by $T_{k_1,k_2}(\ell_1,\ell_2)$.
\begin{thm}\label{th:recu2}
    Assume $u_1=u_2:=u$. The overlap coefficients $T_{k_1,k_2}(\ell_1,\ell_2)$ satisfy the following two recurrence relations:
\begin{align}
 &   (q^{2\ell_2-2j_2}a+q^{2j_2-2\ell_2}/a)T_{k_1,k_2}(\ell_1,\ell_2)=\sum_{r=0,\pm 1}
    {\cal A}^{(2)}_{k_2+r,k_2} T_{k_1,k_2+r}(\ell_1,\ell_2)\,,\label{eq:rec1T}\\
  &  (q^{2\ell_1+2\ell_2-2j_1-2j_2}a+q^{2j_1+2j_2-2\ell_1-2\ell_2}/a)T_{k_1,k_2}(\ell_1,\ell_2)=\sum_{(r,s)\in B_2'}{\cal A}^{(r,s)}_{k_1,k_2}T_{k_1+r,k_2+s}(\ell_1,\ell_2)\,,\label{eq:rec2T}
\end{align}
and two difference equations:
\begin{align}
&  (q^{2k_1-2j_1}b+q^{2j_1-2k_1}/b) T_{k_1,k_2}(\ell_1,\ell_2)=\sum_{r=0,\pm 1}{\cal B}^{(1)}_{\ell_1,\ell_1+r}T_{k_1,k_2}(\ell_1+r,\ell_2)\,,\\
 \label{eq:diff2}  &(q^{2k_1+2k_2-2j_1-2j_2}b+q^{2j_1+2j_2-2k_1-2k_2}/b)T_{k_1,k_2}(\ell_1,\ell_2)=\sum_{(s,r)\in B_2'}{\cal B}^{(r,s)}_{\ell_1-r,\ell_2-s}T_{k_1,k_2}(\ell_1-r,\ell_2-s) \,.
\end{align}
\end{thm}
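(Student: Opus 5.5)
The plan mirrors the univariate derivation of \eqref{eq:diff1} and \eqref{eq:diffR2} from \eqref{eq:overN1}. We start from the expansion \eqref{eq:overwu}, $\fw_{k_1,k_2}=\sum_{\ell_1,\ell_2}T_{k_1,k_2}(\ell_1,\ell_2)\fu_{\ell_1,\ell_2}$, which holds for all $(k_1,k_2)$ by Theorem \ref{thm:overlaps}. We assume the generic conditions under which $\{\fu_{n_1,n_2}\}$ and $\{\fw_{n_1,n_2}\}$ are bases. The four identities then come from acting with ${\cal A}_2$, ${\cal A}_{12}$, ${\cal B}_1$, ${\cal B}_{12}$ on both sides and projecting onto a fixed basis vector.

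For the two recurrence relations, we apply ${\cal A}_2$ (resp. ${\cal A}_{12}$) to both sides of \eqref{eq:overwu}.

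1. On the right-hand side, $\fu_{\ell_1,\ell_2}$ is an eigenvector by Proposition \ref{pr:eigA2B21}. The eigenvalues are $q^{2\ell_2-2j_2}a+q^{2j_2-2\ell_2}/a$ for ${\cal A}_2$ and $q^{2\ell_1+2\ell_2-2j_1-2j_2}a+q^{2j_1+2j_2-2\ell_1-2\ell_2}/a$ for ${\cal A}_{12}$.
2. On the left-hand side, we use \eqref{eq:A2w} (resp. \eqref{eq:A12w}, valid since $u_1=u_2$). This writes ${\cal A}_2\fw_{k_1,k_2}$ as the sum of ${\cal A}^{(2)}_{k_2+r,k_2}\fw_{k_1,k_2+r}$ over $r=0,\pm1$. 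It writes ${\cal A}_{12}\fw_{k_1,k_2}$ as the sum of ${\cal A}^{(r,s)}_{k_1,k_2}\fw_{k_1+r,k_2+s}$ over $(r,s)\in B_2'$.
3. We re-expand each shifted $\fw$ via \eqref{eq:overwu}. Indices outside the range $[0,2j_1]\times[0,2j_2]$ carry vanishing coefficients, or the corresponding $T$ is set to zero, exactly as in the univariate boundary case.
4. Comparing coefficients of $\fu_{\ell_1,\ell_2}$, which is legitimate by linear independence, gives \eqref{eq:rec1T} and \eqref{eq:rec2T}.

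For the difference equations, we apply ${\cal B}_1$ (resp. ${\cal B}_{12}$) instead.

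1. Now $\fw_{k_1,k_2}$ is the eigenvector, by \eqref{eq:B1w} and \eqref{eq:B12w}, so the left-hand side becomes the eigenvalue $q^{2k_1-2j_1}b+q^{2j_1-2k_1}/b$ (resp. $q^{2k_1+2k_2-2j_1-2j_2}b+q^{2j_1+2j_2-2k_1-2k_2}/b$) times $\fw_{k_1,k_2}$.
2. On the right-hand side, each $\fu_{\ell_1,\ell_2}$ is replaced using \eqref{eq:B1u} (resp. \eqref{eq:B12u}).
3. Collecting the coefficient of a fixed $\fu_{\ell_1,\ell_2}$ requires re-indexing. A term $\fu_{\ell_1'+r,\ell_2'+s}$ coming from $\fu_{\ell_1',\ell_2'}$ contributes to $\fu_{\ell_1,\ell_2}$ when $\ell_1'=\ell_1-r$ and $\ell_2'=\ell_2-s$. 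This yields ${\cal B}^{(r,s)}_{\ell_1-r,\ell_2-s}T_{k_1,k_2}(\ell_1-r,\ell_2-s)$ summed over the index set of \eqref{eq:B12u}, which is \eqref{eq:diff2}.
4. In the rank-one case, \eqref{eq:B1u} gives coefficients ${\cal B}^{(1)}_{\ell_1+r,\ell_1}$ after the same shift. We must match these with the notation ${\cal B}^{(1)}_{\ell_1,\ell_1+r}$ used in the statement, in the same way that \eqref{eq:diffR2} was read off from \eqref{eq:tri2}.

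One subtlety sits in \eqref{eq:B1u}: ${\cal B}^{(1)}$ depends on $n_2=\ell_2$ through $a\to aq^{2n_2-2j_2}$. This is harmless because $\ell_2$ is unchanged in that expansion.

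The main obstacle is bookkeeping rather than conceptual. We must check that the index conventions of \eqref{eq:B12u} (written with $(s,r)\in B_2'$) and of \eqref{eq:diff2} agree after the shift. We must also confirm that boundary terms vanish, i.e. that ${\cal A}^{(r,s)}_{k_1,k_2}$ and ${\cal B}^{(r,s)}$ are zero whenever the shifted index leaves the admissible rectangle. We would verify the latter from the factors $(1-q^{2n})$ and $(1-q^{2n-4j})$ inherited from \eqref{eq:Amn}, or simply adopt the convention that out-of-range $T$ vanish.
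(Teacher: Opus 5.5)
Your proposal is correct and is essentially the paper's own argument. You act with $\mathcal{A}_2$, $\mathcal{A}_{12}$, $\mathcal{B}_1$, $\mathcal{B}_{12}$ on \eqref{eq:overwu}, use the eigenvalue relations of Proposition~\ref{pr:eigA2B21} on one side and the expansions \eqref{eq:A2w}--\eqref{eq:B12u} on the other, then project onto $\fu_{\ell_1,\ell_2}$. Your explicit handling of the re-indexing $(\ell_1-r,\ell_2-s)$ and of out-of-range indices fills in details the paper leaves implicit.
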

\proof Relations \eqref{eq:A2w}-\eqref{eq:B12u} allow us to obtain two recurrence relations and two difference equations for the coefficients $T_{k_1,k_2}(\ell_1,\ell_2)$. Indeed, acting with the operators $\cal A_2$ (with \eqref{eq:A2u}, \eqref{eq:A2w}), $\cal A_{12}$ (with \eqref{eq:specA12}, \eqref{eq:A12w}), $\cal B_1$ (with \eqref{eq:B1w}, \eqref{eq:B1u}) and $\cal B_{12}$ (with \eqref{eq:B12w} and \eqref{eq:B12u}) on \eqref{eq:overwu}, we prove the relations of the theorem.
\endproof
 Accordingly to \eqref{eq:rec1T}-\eqref{eq:diff2}, the coefficients $T_{k_1,k_2}(\ell_1,\ell_2)$ at $u_1=u_2$ are said to be bispectral. 
Let us mention that
the overlap coefficients  $\overline{T}_{k_1,k_2}(\ell_1,\ell_2)$ defined in \eqref{eq:Tbar} at $u_1=u_2$ satisfy similar relations than the ones for ${T}_{k_1,k_2}(\ell_1,\ell_2)$. This is shown using the actions of $\cal A_1$, $\cal B_2$, $\cal A_{21}$ and $\cal B_{21}$. These relations are omitted here, as they do not offer new insights into our findings.\vspace{1mm}

Finally, let us discuss potential connections between the polynomials introduced in Proposition \ref{cor:polyprop}, and multivariate polynomials of Tratnik type defined on a discrete support. On one hand,  the polynomials in Proposition \ref{cor:polyprop}
have five parameters and spectral data \eqref{eq:specdata}. Also, it can be easily shown that they enjoy bispectrality using  \eqref{eq:rec1T}--\eqref{eq:diff2}. On the other hand, in \cite{T91a} the multivariate polynomials of Wilson type have been studied, then in \cite{T91b} the discrete families have been introduced. 
The $q$-analogs of the latter are the so-called Gasper--Rahman polynomials \cite{GR05} whose bispectrality has been proven in \cite{I08}.
For the bivariate case, they have five parameters and their spectral data matches with \eqref{eq:specdata}. This strongly suggests that the polynomials introduced in Proposition \ref{cor:polyprop} and the bivariate Gasper--Rahman polynomials are closely related, although their precise relationship is not clear at the moment.

\subsubsection{Bispectral algebra and rank 2 Askey--Wilson algebra}
In Subsections~\ref{sec:AW3} and \ref{sec:qdiff}, we recalled that the bispectral algebra associated to the $q$-Racah polynomials is the Askey--Wilson algebra $AW(3)$. Similarly, the bispectral algebra associated to the overlap coefficients $T_{k_1,k_2}(\ell_1,\ell_2)$  given by \eqref{eq:T} is given by the rank 2 Askey--Wilson algebra\footnote{This higher rank version of Askey--Wilson algebra has been introduced and studied in \cite{PW17,DD18,GW22,CF23}.} $AW(4)$, as we now show.

\begin{defn}
The Askey--Wilson algebra of rank 2 denoted $AW(4)$ is the unital associative ${\mathbb C}$-algebra generated by the elements  $G_1,\, G_2,\, G_3,\, G_4,\,G_{12},\, G_{23},\, G_{34}\,, G_{123}\,,G_{234},\,G_{1234}$. One defines the following elements, for $(I,J,K)\in\{(1,2,3),(2,3,4),(12,3,4),(1,23,4),(1,2,34)\}$:
\begin{align}
   & G_{IK}:=-\frac{1}{q^2-q^{-2}}\lbrack G_{IJ},G_{JK} \rbrack_q +\frac{1}{q+q^{-1}}(G_IG_K+G_JG_{IJK})\,,\\
    & G_{KI}:=-\frac{1}{q^2-q^{-2}}\lbrack G_{JK},G_{IJ} \rbrack_q +\frac{1}{q+q^{-1}}(G_IG_K+G_JG_{IJK})\,.
\end{align}
The defining relations are:
\begin{itemize}
    \item For $I,J\in\{1,2,3,4,12,23,34,123,234,1234\}$,
    \begin{align}
        [G_I,G_J]=0\quad\text{if }\quad I\subset J\quad\text{or}\qquad  I\cap J=\emptyset\,,
    \end{align}
    \item For $(I,J,K)\in\{(1,2,3),(2,3,4),(12,3,4),(1,23,4),(1,2,34)\}$,
    \begin{align}
        &G_{IJ}+	\frac{1}{q^2-q^{-2}} \lbrack G_{JK}, G_{IK}\rbrack_q   =\frac{1}{q+q^{-1}}(G_{I}G_{J}+G_{K}G_{IJK})\,,\\
       & G_{JK}+	\frac{1}{q^2-q^{-2}} \lbrack G_{IJ}, G_{KI}\rbrack_q   =\frac{1}{q+q^{-1}}(G_{J}G_{K}+G_{I}G_{IJK})\,, 
    \end{align}
    \item\begin{align}
&G_{14}+	\frac{1}{q^2-q^{-2}} \lbrack G_{13}, G_{34}\rbrack_q   =\frac{1}{q+q^{-1}}(G_{1}G_{4}+G_{3}G_{134}) \ , \label{AW43}\\
&G_{41}+	\frac{1}{q^2-q^{-2}} \lbrack G_{42}, G_{12}\rbrack_q   =\frac{1}{q+q^{-1}}(G_{1}G_{4}+G_{2}G_{412}) \ , \label{AW44}
\end{align} 
\end{itemize}
\end{defn}

Let us recall that 
 $c_1$ is the element of $\Loop$ defined in~\eqref{def:Cas}. Using the map $\Delta_{u_1,u_2}$ defined by \eqref{eq:Duu}, we define:
\beqa
{C}_{12}:= {\Delta}_{u,u}(c_1)\,.
\eeqa

\begin{prop} \label{prop:wAW2} 
For $u_1=u_2:=u$, there exists an algebra homomorphism $\psi: AW(4) \rightarrow \Uq^{\otimes 2}$ such that:
	\beqa
\psi:	&&G_{12}\mapsto C_{12}\,,\quad G_{123} \mapsto  A_{12}\ ,\quad G_{23} \mapsto  A_2 \ ,\quad G_{234} \mapsto  B_1\ ,\quad G_{34} \mapsto  B_{12}\ , \\
	&& G_{1} \mapsto C\otimes 1\ , \quad G_2 \mapsto  1\otimes C\ , \quad G_3 \mapsto  \alpha\ ,\quad G_4 \mapsto q^{-1}u^2+qu^{-2}\ ,\quad G_{1234} \mapsto \beta\ , \nonumber
	\eeqa
	where $C$, $A_2$, $B_1$, $A_{12}$ and $B_{12}$ are defined by \eqref{def:Cas}, \eqref{eq:defA2}, \eqref{eq:defAs2}, \eqref{eq:defA12} and \eqref{eq:defB12}.
\end{prop}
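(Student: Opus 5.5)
Since $AW(4)$ is given by generators and relations, it suffices to check that the proposed images satisfy every defining relation in $\Uq^{\otimes 2}$. The images of $G_3$, $G_4$ and $G_{1234}$ are scalars, so every relation involving them as a factor is automatic. We first fix the images of the auxiliary elements $G_{IK},G_{KI}$ by their defining formulas. For instance, $G_{13}$ is obtained from $(I,J,K)=(1,2,3)$ as a $q$-commutator of $\psi(G_{12})=C_{12}$ and $\psi(G_{23})=A_2$. The elements $G_{24},G_{14},G_{134},G_{412}$, etc., are obtained similarly. We compute these images explicitly in terms of $E\otimes 1, 1\otimes E,\dots$, using $\mathsf{ev}_u\otimes\mathsf{ev}_u$ and the explicit forms \eqref{eq:defA1}--\eqref{eq:defAs21}. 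A natural guess, to be confirmed, is that $\psi(G_{13})$ equals $\Delta_{u,u}$ of a ``partner'' of $c_1$ or $\widehat A$. Similarly, $\psi(G_{24})$ should be $A_1$ or $B_2$-like, so that these images are again recognisable elements.

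\textbf{Commutation relations.} All pairs with $I\subset J$ or $I\cap J=\emptyset$ must commute. Among these, $[A_2,A_{12}]=0$, $[B_{12},B_1]=0$ and $[A_2,B_1]=0$ follow directly from Corollary \ref{cor:comm}. Next, $C\otimes 1$ and $1\otimes C$ are central in $\Uq^{\otimes2}$. The element $C_{12}=\Delta_{u,u}(c_1)$ commutes with $A_{12}=\Delta_{u,u}(\widehat A)$ because $[c_1,\widehat A]=0$ in $\Loop$ by \eqref{eq:xccom}, since $\widehat A$ lies in the $U_q sl_2$ generated by $e_1,f_1,k_1$. For the same reason, $C_{12}$ commutes with $\Delta_{u,u}(\widehat{A})$. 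The remaining checks are $[C_{12},B_1]$ and $[A_2,C_{12}]$ ($I=2\cap 23\ne\emptyset$); $I=12$, $J=34$ gives $[C_{12},B_{12}]=0$. To settle $[C_{12},B_{12}]$ at $u_1=u_2$, we note that $\mathsf{ev}_u(c_1)$ and $\mathsf{ev}_u(c_0)$ both equal the Casimir. Hence $C_{12}$ is also $(\mathsf{ev}_u\otimes\mathsf{ev}_u)\Delta(c_0)$, and by the $\sigma$-symmetry relevant to $\overline\Delta$ it should coincide with $\overline\Delta_{u,u}(c_0)$ up to a twist. We expect this identity to hold exactly because the coproduct of the Casimir is symmetric under the evaluation at equal parameters. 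This is precisely where the hypothesis $u_1=u_2$ enters, and $[c_0,\widehat B]=0$ then yields $[C_{12},B_{12}]=0$. The relation $[B_1,C_{12}]$ is not required, since $234\cap 12\neq\emptyset$ and neither set contains the other.

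\textbf{Quadratic relations.} For each triple $(I,J,K)$ in the list, the two relations are Askey--Wilson type relations among three elements. For $(2,3,4)$ the three elements are $A_2$, $B_1$-related data and scalars, and the relation reduces to the $AW(3)$ relations \eqref{eq:AW1}--\eqref{eq:AW2} in the second tensor factor. The triple $(1,2,3)$ reduces to the known embedding of $AW(3)$ into the centralizer of $\Delta(\Uq)$, with $C_{12}$ the intermediate Casimir. The triples $(12,3,4)$, $(1,23,4)$ and $(1,2,34)$ similarly reduce to the $AW(3)$ relations for the pair $(A_{12},B_{12})$ on $\Uq^{\otimes2}$, with central elements $C_{12}$ and the scalars. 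These come from applying $\Delta_{u,u}$ to \eqref{eq:AW1}-type relations lifted to $\Loop$ and evaluated at equal parameters. Finally, \eqref{AW43} and \eqref{AW44} are verified by direct expansion.

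\textbf{Main obstacle.} The main obstacle is this bookkeeping in $\Uq^{\otimes 2}$, especially \eqref{AW43}, \eqref{AW44} and the mixed relations involving $B_{12}$ and $C_{12}$. Here no structural shortcut is evident, and we would verify them by symbolic computation in a PBW basis of $\Uq^{\otimes2}$, after reducing via $K$-grading to a few weight components.
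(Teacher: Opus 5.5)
Your overall plan (check every defining relation of $AW(4)$ on the proposed images) is what the paper does. However, several of the shortcuts you use to discharge relations are false, so as written some relations are not actually proved.

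\textbf{The commutator $[C_{12},B_{12}]$ and the triple $(12,3,4)$.} Your justification for $[C_{12},B_{12}]=0$ fails, even though the identity $C_{12}=\overline\Delta_{u,u}(c_0)$ you aim at is true. In fact $(\mathsf{ev}_u\otimes\mathsf{ev}_u)\Delta(c_0)$ equals $\bar\delta(C)$, not $C_{12}$. Moreover the coproduct of the Casimir is not $\sigma$-symmetric: $\delta(C)\neq\bar\delta(C)$. The missing observation is the following. For $u_1=u_2=u$, the map $\Delta_{u,u}$ coincides with $\delta\circ\mathsf{ev}_u$ on the subalgebra generated by $e_1,f_1,k_1^{\pm1}$, and $\overline\Delta_{u,u}$ coincides with $\delta\circ\mathsf{ev}_u$ on the subalgebra generated by $e_0,f_0,k_0^{\pm1}$. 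You can check this on generators; it is exactly where $u_1=u_2$ enters. Hence $C_{12}=\delta(C)$, $A_{12}=\delta(A)$ and $B_{12}=\delta(B)$ (cf.\ Lemma~\ref{lem:Deldel}). Then $[C_{12},A_{12}]=[C_{12},B_{12}]=0$ simply because $C$ is central in $\Uq$. The same observation handles the triple $(12,3,4)$: apply the algebra map $\delta$ to \eqref{eq:AW1}--\eqref{eq:AW2}. Your mechanism, applying $\Delta_{u,u}$ to AW(3)-type relations between $\widehat A,\widehat B$ lifted to $\Loop$, cannot produce this. The map $\Delta_{u,u}$ sends $(\widehat A,\widehat B)$ to $(A_{12},B_{21})$, not to $(A_{12},B_{12})=(\Delta_{u,u}(\widehat A),\overline\Delta_{u,u}(\widehat B))$. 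And $B_{21}=\bar\delta(B)$ does not even commute with $C_{12}$.

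\textbf{The other four triples.} Your assignment of generators to these triples is wrong. Read off $(G_{IJ},G_{JK})$ and the commuting elements $G_I,G_J,G_K,G_{IJK}$ for each triple; only $(12,3,4)$ concerns the pair $(A_{12},B_{12})$.
\begin{itemize}
\item The triple $(2,3,4)$ concerns $(A_2,B_{12})$, with $B_1=G_{234}$ in the role of $\beta$. It does reduce to the one-factor relations, because $B_{12}=1\otimes(B-\beta K^{-2})+B_1\,(1\otimes K^{-2})$ and $B_1$ commutes with $1\otimes\Uq$.
\item Symmetrically, $(1,23,4)$ concerns $(A_{12},B_1)$, with $A_2=G_{23}$ in the role of $\alpha$, via $A_{12}=(A-\alpha K^2)\otimes 1+(K^2\otimes 1)A_2$.
\item The triples $(1,2,3)$ and $(1,2,34)$ reduce to none of these. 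The first is generated by $(C_{12},A_2)$ with $A_{12}$ as $G_{IJK}$; the second by $(C_{12},B_1)$ with $B_{12}$ as $G_K$.
\end{itemize}
In particular, $(1,2,3)$ is not an instance of the AW(3) inside the centralizer of $\delta(\Uq)$, since $A_2$ does not commute with $\delta(K)=K\otimes K$. These two triples, together with \eqref{AW43}--\eqref{AW44}, genuinely require the direct expansion you reserve for the ``main obstacle''. Also, $[A_2,C_{12}]$ and $[C_{12},B_1]$, which you list among the remaining checks, are neither required nor true: they are precisely the non-commuting generators of these two triples.

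With these corrections, your argument becomes a partly structural version of the paper's proof, which simply verifies every relation by direct computation in $\Uq\otimes\Uq$.
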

\proof The proof is based on a verification of all the defining relations of $AW(4)$ expressing  $C_{12}$, $A_{12},...$ in terms of the generators of $\Uq$ and using the commutation relations of $\Uq$.  \endproof

The composition of homomorphisms $(\pi_{j_1}\otimes \pi_{j_2}) \circ \psi$ provides a realization of $AW(4)$ in terms of $q$-difference operators parametrized by $u$, and acting on $\mathbb{C}_{2j_1}[z_1]\otimes \mathbb{C}_{2j_2}[z_2]$. This proves that the bispectral algebra associated to the overlap coefficients $T_{k_1,k_2}(\ell_1,\ell_2)$ is $AW(4)$.

\subsubsection{Link with the bialgebra structure of $\Uq$}
The bialgebra structure of $\Uq$ is equipped with the coproduct $\delta \colon \Uq \to \Uq \otimes \Uq$ given by:
	\begin{align} \label{def:coprod}
		&\delta(E)= E \otimes K^{-1}  + K \otimes E , \qquad \delta(F) =F \otimes K^{-1} + K \otimes F, \qquad \delta(K^{\pm 1}) = K^{ \pm 1} \otimes K^{\pm 1} \,. 
	\end{align}
	We also introduce the opposite coproduct of $\Uq$:
	\beqa
	\bar{\delta}  = \sigma \circ \delta \ .
	\eeqa
Then, we observe that there exist relations between the actions of the coproduct of $\Loop$ and the ones of $\Uq$, through the evaluation homomorphism \eqref{map:eval}. 
\begin{lem}\label{lem:Deldel} The following relations hold in $\Uq\otimes \Uq$:
\begin{align}
    A_{12}\big|_{u_1=u_2=u}=\Delta_{u,u}(\widehat{A})=\delta(A)\,,\\
\label{eq:Dbd}      B_{21}\big|_{u_1=u_2=u}=\Delta_{u,u}(\widehat{B})=\bar{\delta}(B)\,,\\
        A_{21}\big|_{u_1=u_2=u}=\overline{\Delta}_{u,u}(\widehat{A})=\bar{\delta}(A)\,,\\
  \label{eq:bDd}          B_{12}\big|_{u_1=u_2=u}=\overline{\Delta}_{u,u}(\widehat{B})=\delta(B)\,,
    \end{align}
    where $\widehat{A},\;\widehat{B}$ are given by \eqref{def:A}-\eqref{def:B} and $A,B$  by \eqref{eq:w0u}-\eqref{eq:w1u}.
\end{lem}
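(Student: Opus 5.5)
This is a direct computation on generators. We write out both sides in $\Uq\otimes\Uq$ and compare them term by term. The only inputs are the coproduct \eqref{affinecoprodefk}, the evaluation map \eqref{map:eval} and the definitions \eqref{def:A}, \eqref{def:B}, \eqref{eq:w0u}, \eqref{eq:w1u}, \eqref{def:coprod}.

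\emph{First relation.} By \eqref{eq:DelAB}, $\Delta(\widehat{A})=(\widehat{A}-\alpha k_1^2)\otimes 1+k_1^2\otimes\widehat{A}$. Applying $\mathsf{ev}_u\otimes\mathsf{ev}_u$ gives
\[
A_{12}|_{u_1=u_2=u}=(A-\alpha K^2)\otimes 1+K^2\otimes A .
\]
Next we compute $\delta(A)$ directly. Since $\delta$ is an algebra map,
\[
\delta(EK)=EK\otimes 1+K^2\otimes EK,\qquad \delta(FK)=FK\otimes 1+K^2\otimes FK,\qquad \delta(K^2)=K^2\otimes K^2 .
\]
Hence
\[
\delta(A)=(q-q^{-1})\big(u\,EK-u^{-1}FK\big)\otimes 1+K^2\otimes(q-q^{-1})\big(u\,EK-u^{-1}FK\big)+\alpha K^2\otimes K^2 .
\]
This coincides with $(A-\alpha K^2)\otimes1+K^2\otimes A$. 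The key point is that a single evaluation parameter $u$ appears in both tensor factors. This is exactly why the hypothesis $u_1=u_2$ is needed.

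\emph{Relations for $B$.} Here the subtlety is which coproduct of $\Uq$ appears. From \eqref{eq:DelAB}, $\Delta(\widehat{B})=(\widehat{B}-\beta k_0^2)\otimes1+k_0^2\otimes\widehat{B}$, and its image under $\mathsf{ev}_u\otimes\mathsf{ev}_u$ is
\[
(B-\beta K^{-2})\otimes 1+K^{-2}\otimes B .
\]
On the $\Uq$ side,
\[
\delta(EK^{-1})=EK^{-1}\otimes K^{-2}+1\otimes EK^{-1},
\]
and similarly for $FK^{-1}$, so that
\[
\delta(B)=(B-\beta K^{-2})\otimes K^{-2}+1\otimes B .
\]
This has the opposite ordering of factors. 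Applying $\sigma$ therefore yields $\bar\delta(B)=K^{-2}\otimes B+(B-\beta K^{-2})\otimes 1$, which proves \eqref{eq:Dbd}. The swap occurs because $\mathsf{ev}_u(e_0)=uF$ and $\mathsf{ev}_u(f_0)=u^{-1}E$ exchange the roles of $E$ and $F$, and $\mathsf{ev}_u(k_0)=K^{-1}$ inverts the Cartan part.

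\emph{Remaining two relations.} Apply $\sigma$ to the two identities just proved, using $\overline\Delta=\sigma\circ\Delta$, $\bar\delta=\sigma\circ\delta$, and the fact that $\sigma$ commutes with $\mathsf{ev}_u\otimes\mathsf{ev}_u$ when $u_1=u_2$. This gives $A_{21}|_{u_1=u_2}=\bar\delta(A)$ and $B_{12}|_{u_1=u_2}=\delta(B)$.

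No step is a real obstacle. The only care needed is bookkeeping the $K^{\pm1}$ factors: checking that the two tensor slots produce $K^{\pm2}$ in the right place, which is what forces $\delta$ versus $\bar\delta$.
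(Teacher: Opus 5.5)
Your route is the paper's own (direct computation on generators). The $A$-computation and the $\sigma$-argument for $A_{21}$ and $B_{12}$ are fine.

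The $B$-step contains an error: the displayed formula for $\delta(B)$ is false, and the ``applying $\sigma$ therefore yields'' does not follow from it. Write $B=X+\beta K^{-2}$ with $X=(q-q^{-1})(u^{-1}EK^{-1}-uFK^{-1})$. You correctly get $\delta(X)=X\otimes K^{-2}+1\otimes X$. However, $\delta(\beta K^{-2})=\beta K^{-2}\otimes K^{-2}$ combines with the term $X\otimes K^{-2}$, so
\[
\delta(B)=B\otimes K^{-2}+1\otimes (B-\beta K^{-2}) .
\]
Your expression $(B-\beta K^{-2})\otimes K^{-2}+1\otimes B$ instead contains $\beta\,1\otimes K^{-2}$ in place of $\beta K^{-2}\otimes K^{-2}$. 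Applying $\sigma$ to what you wrote gives $B\otimes 1+K^{-2}\otimes(B-\beta K^{-2})$. This differs from $\Delta_{u,u}(\widehat B)=(B-\beta K^{-2})\otimes 1+K^{-2}\otimes B$ by $\beta K^{-2}\otimes(K^{-2}-1)\neq 0$. The formula you then state for $\bar\delta(B)$ is correct, since it is $\sigma$ of the corrected $\delta(B)$. So only that one line needs fixing, but as written the $B_{21}$ identity is not actually derived.

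Relatedly, your explanation of why $\bar\delta$ appears is slightly off. Since $\delta(E)$ and $\delta(F)$ have the same shape, the exchange $E\leftrightarrow F$ is irrelevant; it is $k_0\mapsto K^{-1}$ alone that causes the flip. Concretely, $(\mathsf{ev}_u\otimes\mathsf{ev}_u)\Delta(e_0)=uF\otimes K+K^{-1}\otimes uF=u\,\bar\delta(F)$. In general, $(\mathsf{ev}_u\otimes\mathsf{ev}_u)\circ\Delta$ agrees with $\bar\delta\circ\mathsf{ev}_u$ on $e_0,f_0,k_0^{\pm1}$, and with $\delta\circ\mathsf{ev}_u$ on $e_1,f_1,k_1^{\pm1}$. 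Since both sides are algebra maps, this gives the $A_{12}$ and $B_{21}$ identities at once, with no need to track the $\alpha,\beta$ terms. Your $\sigma$-argument then gives the other two.
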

\proof By direct computation of both sides of each equality. \endproof
Let us emphasize that, in \eqref{eq:Dbd}, it is the action of the coproduct $\Delta$ which provides the same result than the opposite coproduct $\bar{\delta}$ (and vice-versa in \eqref{eq:bDd}).
The previous lemma explains how the results obtained in \cite{G25} using only the bialgebra structure of $\Uq$ are similar to the ones obtained here starting from the bialgebra structure of $\Loop$. Indeed, in \cite{G25} the operators $\delta(A)$ and $\delta(B)$ are studied, as well as the overlap coefficients between their respective eigenbases.

This lemma allows us to establish further relations at $u_1=u_2:=u$.
\begin{prop}\label{prop:A12B12bis} The pairs $(\cal A_{12},\cal B_{12})$ and $(\cal A_{21},\cal B_{21})$ at $u_1=u_2:=u$ satisfy the $q$-Dolan--Grady relations~\eqref{qDG1},~\eqref{qDG2} with $\rho \mapsto -(q^2-q^{-2})^2$.
\end{prop}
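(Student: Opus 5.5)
The plan is to reduce the statement to Proposition~\ref{prop:OqLoop} via Lemma~\ref{lem:Deldel}. At $u_1=u_2=u$, Lemma~\ref{lem:Deldel} gives $A_{12}=\delta(A)$ and $B_{12}=\delta(B)$, together with $A_{21}=\bar\delta(A)$ and $B_{21}=\bar\delta(B)$. Here $A=\mathsf{ev}_u(\widehat A)$ and $B=\mathsf{ev}_u(\widehat B)$ are given by \eqref{eq:w0u}, \eqref{eq:w1u}. Recall from Subsection~\ref{sec:AW3} that $A,B$ satisfy the $q$-Dolan--Grady relations \eqref{qDG1}, \eqref{qDG2} with $\rho=-(q^2-q^{-2})^2$, because $\mathsf{ev}_u\circ\phi$ is an algebra homomorphism $O_q\to\Uq$.

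Next, $\delta$ and $\bar\delta=\sigma\circ\delta$ are algebra homomorphisms $\Uq\to\Uq\otimes\Uq$, since $(\Uq,\delta)$ is a bialgebra. Applying $\delta$ to the relations
\[
[A,[A,[A,B]_q]_{q^{-1}}]=\rho[A,B]
\]
and to the one with $A\leftrightarrow B$ shows that $(\delta(A),\delta(B))=(A_{12},B_{12})$ satisfies the same relations. Applying $\bar\delta$ instead gives the same conclusion for $(\bar\delta(A),\bar\delta(B))=(A_{21},B_{21})$. Finally, the image under the algebra homomorphism $\pi_{j_1}\otimes\pi_{j_2}$ yields the claim for the $q$-difference operators $(\mathcal A_{12},\mathcal B_{12})$ and $(\mathcal A_{21},\mathcal B_{21})$, by \eqref{eq:Acalij}.

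The only real work is Lemma~\ref{lem:Deldel}, which is already established. The one point to double-check is the bialgebra property of $\delta$ in \eqref{def:coprod}: it is the restriction of the standard coproduct and respects $[E,F]=(K^2-K^{-2})/(q-q^{-1})$. I would verify this quickly by expanding $[\delta(E),\delta(F)]$; the cross terms cancel because $KE=qEK$ and $KF=q^{-1}FK$. I would also stress why the argument is restricted to $u_1=u_2$: for $u_1\neq u_2$ the element $\Delta_{u_1,u_2}(\widehat A)$ paired with $\overline\Delta_{u_1,u_2}(\widehat B)$ is not the image of a pair of $O_q$ generators under a single homomorphism, so no such conclusion follows.
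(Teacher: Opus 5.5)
Your proposal is correct and follows the paper's proof essentially step for step. You use Lemma~\ref{lem:Deldel} to write $(A_{12},B_{12})=(\delta(A),\delta(B))$ and $(A_{21},B_{21})=(\bar\delta(A),\bar\delta(B))$, transport the $q$-Dolan--Grady relations of $(A,B)$, inherited via $\mathsf{ev}_u\circ\phi$, through the homomorphisms $\delta$ and $\bar\delta$, and then apply $\pi_{j_1}\otimes\pi_{j_2}$ as in \eqref{eq:Acalij}.
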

\begin{proof} Applying $\textsf{ev}_u \circ \phi$ to \eqref{qDG1}, \eqref{qDG2}, one gets that the pair $(A,B)$ given by \eqref{eq:w0u}, \eqref{eq:w1u} satisfies the $q$-Dolan--Grady relations. Then, we consider the image of $(A,B)$ via $\delta$ (resp. $\bar\delta$). The pair $(\delta(A),\delta(B))$ (resp. $(\bar\delta(A),\bar\delta(B)$) satisfies the $q$-Dolan-Grady relations as well. Using Lemma \ref{lem:Deldel} and eq.\,\eqref{eq:Acalij}, the claim follows.
\end{proof}

\subsubsection{Factorized Leonard pairs} \label{sec:FLP}
In previous sections, 
it is  recalled that $q$-Racah polynomials are associated to the notion of Leonard pair, and that the coefficients $S_{k_1,k_2}(\ell_1,\ell_2)$ are related to TD pairs. In this section, it is conjectured that the coefficients $T_{k_1,k_2}(\ell_1,\ell_2)$ are also associated to TD pairs, but exhibit a refined structure associated with the notion of FL pairs introduced in \cite{CZ23}.

Assume the vector space $V_{2j_1}(u)\otimes V_{2j_2}(u)$ is an irreducible $({\cal A}_{12},{\cal B}_{12})$--module. By \cite[Thm.\,3.7]{Ter03}, according to
\eqref{eq:specA12}, \eqref{eq:B12w} and Proposition \ref{prop:A12B12bis}, it follows that  $({\cal A}_{12},{\cal B}_{12})$ is a particular TD pair of $q$-Racah type. A refined structure of this TD pair can be exhibited using the concept of  FL pairs, as we now show. In \cite{CZ23} it has been defined a FL pair of type $A_2$ in a triangular domain, called a factorized $A_2$-Leonard pair \cite[Def 3.3]{CZ23}.  Here, we extend this definition in a straightforward manner, by introducing a factorized $B'_2$-Leonard pair that is associated with a rectangular domain  
$\cD$ given by:
\begin{equation}\label{eq:dom}
 \cD=\{ (x,y)\ | \ x,y\in \mathbb{Z}_{\geq 0}, \ 0\leq x\leq j_1, 0\leq y\leq j_2 \}\,.
\end{equation}
Two couples $(x,y),(x',y')\in \cD$ are called $B'_2$-adjacent, $(x,y)\adj(x',y')$, if
\begin{equation}
  (x-x',y-y') \in B'_2\,,
\end{equation}
where $B'_2$ is the set defined by \eqref{eq:AB2}.
\begin{defi}\label{def:IT}
Let $V$ be a $\mathbb{C}$-vector space.
The pair $(H,H^\star)$ is a $B'_2$-Leonard pair on the domain $\cD$ if the following statements are satisfied:
\begin{itemize}
 \item[(i)] $H$ is a two-dimensional  subspace of $\End(V)$ whose elements are diagonalizable and mutually commute;
 \item[(ii)] $H^\star$ is a two-dimensional  subspace of $\End(V)$ whose elements are diagonalizable and mutually commute;
 \item[(iii)] there exists a bijection $\lambda\mapsto V_\lambda$  from $\cD$ to the common eigenspaces of $H$ such that, for all $\lambda\in \cD$,
 \begin{equation}\label{eq:HsV}
  H^\star V_\lambda \subseteq \sum_{ \genfrac{}{}{0pt}{}{\chi \in \cD}{\bar \chi \adj   \bar \lambda}} V_\chi\,,
 \end{equation}
 where $\bar \chi$ (resp. $\bar \lambda$) denotes the pair obtained by reversing the order of the entries in $ \chi$ (resp. $ \lambda$);
 \item[(iv)]  there exists a bijection $\lambda\mapsto V^\star_\lambda$  from $\cD$ to the common eigenspaces $V^\star_\lambda$ of $H^\star$ such that, for all $\lambda\in \cD$,
 \begin{equation}\label{eq:HVs}
  H V^\star_\lambda \subseteq \sum_{ \genfrac{}{}{0pt}{}{\chi \in \cD}{\chi \adj   \lambda}} V^\star_\chi\,;
 \end{equation}
 \item[(v)] there does not exist a subspace $W$ of $V$ such that $HW\subseteq W$, $H^\star W\subseteq W$, $W\neq 0$, $W\neq V$;
 \item[(vi)] $\dim(V_\lambda)=\dim(V^\star_\lambda)=1$, for all  $\lambda\in \cD$;
 \item[(vii)] there exists a non-degenerate symmetric bilinear form  $\langle,\rangle$ on $V$ such that, for $\lambda,\chi\in \cD$ and $\lambda\neq \chi$,
 \begin{equation}
\langle V_\lambda, V_\chi\rangle=0\ ,\qquad  \langle V^\star_\lambda, V^\star_\chi\rangle=0\,.
 \end{equation}
\end{itemize}
\end{defi}

\begin{defi}\label{def:flp}
Let $(H,H^\star)$ be a $B'_2$-Leonard pair on the domain $\cD$ and $X,Y\in \End(V)$ (resp. $X^\star,Y^\star$) be a basis of $H$ (resp. $H^\star$).
The pair $(H,H^\star)$ is a \textit{factorized $B'_2$-Leonard pair} if the following statements are satisfied:
\begin{itemize}
 \item[(viii)] the eigenvalues of $X$ and $Y^\star$ satisfy, for any $(x,y),(i,j)\in \cD$,
 \begin{align}\label{eq:eigenXY}
 & (X-t_x 1) V_{(x,y)} =0 \,,  \qquad 
 (Y^\star-\theta^\star_j 1) V^\star_{(i,j)} =0 \,,
  \end{align}
 $t_x\neq t_{x'} \Leftrightarrow x\neq x'$ 
  and $\theta^\star_j\neq \theta^\star_{j'} \Leftrightarrow j\neq j'$;
 \item[(ix)] in addition to the commutation relations $[X,Y]=0$ and $[X^\star,Y^\star]=0$, one has
 \begin{align}\label{eq:com1}
  [X,Y^\star]=0\,;
 \end{align}
 \item[(x)] each common eigenspace of $X$ and $Y^\star$ has dimension one;
   \item[(xi)]   
  the following relations hold, for $(i,j),(i+1,j)\in \cD$ and $(x,y),(x,y+1)\in \cD$,
 \begin{subequations}
      \begin{align}
 & \langle\, V^\star_{(i+1,j)}\, ,\, X V^\star_{(i,j)}\,\rangle\neq 0\,,&& \langle\, V^\star_{(i,j)}\, ,\, X V^\star_{(i+1,j)}\,\rangle\neq 0\,, \label{eq:irre1}\\
      &\langle V_{(x,y+1)}\,,\, Y^\star V_{(x,y)}\rangle \neq 0\,,&& \langle V_{(x,y)}\,,\, Y^\star V_{(x,y+1)}\rangle \neq 0\,.\label{eq:irre2}
  \end{align} 
  \end{subequations}
 \end{itemize}
 The ordered 4-tuple $(X,Y;X^\star,Y^\star)$ is called a basis of the factorized $B'_2$-Leonard pair.
 \end{defi}
\begin{rem}
Note that the definition of $A_2$- and $B'_2$-Leonard pairs is closely related to the family of linear maps introduced in \cite[Problem\,7.1]{TI10}.
\end{rem}

\begin{conj}\label{conj:FPBp2} Assume the vector space $V_{2j_1}(u)\otimes V_{2j_2}(u)$ is irreducible with respect to the action of the 4-tuple $(\mathcal{A}_2,\mathcal{A}_{12};\mathcal{B}_{12},\mathcal{B}_1)$. Then, $(\mathcal{A}_2,\mathcal{A}_{12};\mathcal{B}_{12},\mathcal{B}_1)$ is a basis of a factorized $B'_2$-Leonard pair.
\end{conj}
If this conjecture is true, then the coefficients $T_{k_1,k_2}(\ell_1,\ell_2)$ at $u_1=u_2$ find a natural interpretation as overlap coefficients relating different eigenbases associated with a factorized $B'_2$-Leonard pair. 

 \section{Concluding remarks} 
 A few comments and perspectives are now highlighted.
Firstly, let us point out that Proposition \ref{prop:overTD} gives an explicit answer to the question rised by P. Terwilliger in \cite[Above Problem 2.3]{Ter03} for TD pairs of type I (also called $q$-Racah type) which main characteristics are given in Proposition \ref{prop:diamA12} and Corollary \ref{cor:charTD}. We are confident that the analysis for the bivariate case $N=2$ can be extended to the multivariate case $N>2$, combining the results in \cite{IT10} and \cite{BVZ16}. Under certain irreducibility conditions that remain to be identified, we thus conjecture that the expected multivariate $q$-Racah type functions have  $2N+2$ parameters ($a,b$ and $\{u_i,j_i\}_{i=1}^{N}$) and are, up to conventions, already given in \cite{BVZ16}. For the case $q=1$, we mention the work \cite{CGT25} where overlap coefficients associated with TD pairs of type II (also called Racah type) are obtained. How our overlap coefficients in Theorem \ref{thm:overlaps} (eq.\,\eqref{eq:overwu}) at $q=1$ relate to those in \cite[Thm.\,5.1]{CGT25} is an open problem. For TD pairs of type III (also called Bannai-Ito type), the corresponding analog of the $q$-Onsager and Racah algebra  \cite{GZ88} is  known \cite{BGTVZ14}, but the question of overlap coefficients remains open.

Secondly, in \cite{CZ23} recall that
the concept of FL pairs is introduced  and studied for the root system $A_2$. 
In the conclusion of that paper, the existence of FL pairs associated with different root systems is conjectured. In the present paper, provided certain irreducibility conditions are satisfied,   TD pairs associated with $({\cal A}_{12}$, ${\cal B}_{21})$ (or $({\cal A}_{21}$, ${\cal B}_{12})$) lead to FL pairs for the root system $B'_2$.  As pointed out below Proposition \ref{prop:overTD}, it would be also desirable to exhibit a similar refined structure for the TD pairs $({\cal A}_{12}$, ${\cal B}_{21})$ (or $({\cal A}_{21}$, ${\cal B}_{12}))$. Indeed, given a TD pair, multiplicities in the eigenvalues suggest the existence of a refined algebraic structure that would allow to further distinguish TD pairs with the same eigenvalues sequences, diameter, shape and character formula.  

Thirdly, Tratnik polynomials admit generalizations known as Griffiths polynomials. Originally introduced in \cite{G71}, these were initially of Krawtchouk type, since they can be written as sums of products of three univariate Krawtchouk polynomials \cite{GVZ13}. Recent work has extended Griffiths polynomials to the Racah \cite{CFR23,CFGRVZ24} and $q$-Racah \cite{CFGR25} families. Bivariate Tratnik polynomials emerge as limiting cases of the Griffiths polynomials. The general framework introduced in this paper, coupled with connections to quantum groups and quadratic algebras, provides a clear pathway toward a deeper understanding of Griffiths polynomials.

\vspace{5mm}

\noindent
{\bf Acknowledgments:} We thank Paul Terwilliger for communications related with \cite{IT10}.
P.B. and N.C.  are supported by C.N.R.S.

\end{document}